\numberwithin{equation}{section}
\numberwithin{figure}{section}
\theoremstyle{plain}
\newtheorem{thm}{\protect\theoremname}
\theoremstyle{definition}
\newtheorem{defn}[thm]{\protect\definitionname}
\theoremstyle{remark}
\newtheorem{rem}[thm]{\protect\remarkname}
\theoremstyle{plain}
\newtheorem{prop}[thm]{\protect\propositionname}
\theoremstyle{plain}
\newtheorem{lem}[thm]{\protect\lemmaname}
\theoremstyle{plain}
\newtheorem{cor}[thm]{\protect\corollaryname}
\providecommand{\corollaryname}{Corollary}
\providecommand{\definitionname}{Definition}
\providecommand{\lemmaname}{Lemma}
\providecommand{\propositionname}{Proposition}
\providecommand{\remarkname}{Remark}
\providecommand{\theoremname}{Theorem}
\begin{document}
\title[Global attractors and convergence for Cahn-Hilliard equation on conic manifolds]{Existence of global attractors and convergence of solutions for the Cahn-Hilliard equation on manifolds with conical singularities}

\author{Pedro T. P. Lopes}
\address{Instituto de Matem\'atica e Estat\'istica, Universidade de S\~ao Paulo, Rua do Mat\~ao 1010, 05508-090, S\~ao Paulo, SP, Brazil}
\email{pplopes@ime.usp.br}

\author{Nikolaos Roidos}
\address{Department of Mathematics, University of Patras, 26504 Rio Patras, Greece}
\email{roidos@math.upatras.gr}

\subjclass[2020]{35B40; 35K58; 35K65; 35K91; 35R01}

\begin{abstract}
We consider the Cahn-Hilliard equation on manifolds with conical singularities and prove existence of global attractors in higher order Mellin-Sobolev spaces with asymptotics. We also show convergence of solutions in the same spaces to an equilibrium point and provide asymptotic behavior of the equilibrium near the conical tips in terms of the local geometry.
\end{abstract}

\maketitle

\section{Introduction}

In this article, we show existence and regularity of global attractors
as well as convergence results for the Cahn-Hilliard equation considered
on {\em manifolds with conical singularities}. We model such a manifold
as a $(n+1)$-dimensional compact manifold $\mathcal{B}$ with closed
boundary $\partial\mathcal{B}$, $n\ge1$, which is endowed with a
degenerated Riemannian metric $g$ that, in local coordinates $(x,y)\in[0,1)\times\partial\mathcal{B}$
on a collar neighborhood of the boundary, has the following expression
$$
g=dx^{2}+x^{2}h(x),
$$
where $[0,1)\ni x\mapsto h(x)$
is a smooth family of Riemannian metrics on $\partial\mathcal{B}$.
We denote $\mathbb{B}=(\mathcal{B},g)$ and $\partial\mathbb{B}=(\partial\mathcal{B},h(0))$.
The Laplacian on $\mathbb{B}$, in local coordinates $(x,y)\in[0,1)\times\partial\mathcal{B}$
on the collar part, admits the following degenerate expression
$$
\Delta=\frac{1}{x^{2}}\Big((x\partial_{x})^{2}+(n-1+\frac{x\partial_{x}\det(h(x))}{2\det(h(x))})(x\partial_{x})+\Delta_{h(x)}\Big),
$$
where $\Delta_{h(x)}$ is the Laplacian on $(\partial\mathcal{B},h(x))$. The operator $\Delta$ belongs to the class of {\em cone differential operators} or {\em Fuchs type operators}, see Section \ref{sec:Realizations} for more details.

On $\mathbb{B}$ we consider the following problem
\begin{equation}
\begin{aligned}u'(t)+\Delta^{2}u(t) & =\Delta(u^{3}(t)-u(t)),\quad t\in(0,T),\\
u(0) & =u_{0},
\end{aligned}
\label{eq:mainequation}
\end{equation}
which is known as Cahn-Hilliard equation (CH for short); it is a diffusion
interface equation that models phase separation of a binary mixture.
On classical domains, (\ref{eq:mainequation}) has been generalized
and extensively studied in many directions and aspects, such as existence,
regularity and convergence of solutions, existence of global attractors,
etc. A sufficient number of related results can be found in \cite{Miranville}.

However, on singular domains much less is known. Using the theory
of cone differential operators, in \cite{RS1} it was first shown
short-time existence of CH on $\mathbb{B}$ for the case where $h(\cdot)$
is constant, by employing $L^{p}$-maximal regularity techniques. Those results
were extended to arbitrary $\mathbb{B}$ and improved to higher regularity
in \cite{RS2}. Finally, global solutions and smoothing results were
proved in \cite{LopesRoidos}. Summarizing those results, let us assume that $\dim(\mathbb{B})=n+1\in\{2,3\}$, choose $s\ge0$ and let the exponent $\gamma$ be as follows 
\begin{gather}
\frac{\dim(\mathbb{B})-4}{2}<\gamma<\min\Big\{-1+\sqrt{\Big(\frac{\dim(\mathbb{B})-2}{2}\Big)^{2}-\lambda_{1}},\frac{\text{dim}(\mathbb{B})-4}{4}\Big\},\label{gamma}
\end{gather}
where $0=\lambda_{0}>\lambda_{1}>\cdots$ are the eigenvalues of the boundary
Laplacian $\Delta_{h(0)}$ on $\partial\mathbb{B}$. Denote by $\mathcal{H}^{\eta,\rho}(\mathbb{B})$, $\eta, \rho\in\mathbb{R}$, the {\em Mellin-Sobolev space}, see Definition \ref{def:mellinsobolevspaces}. Moreover, let $\mathbb{R}_{\omega}$ and $\mathbb{C}_{\omega}$ be the finite dimensional spaces of
smooth functions on $\mathbb{B}$ that are locally constant close
to the singularities, with values in $\mathbb{R}$ and $\mathbb{C}$
respectively, see Section \ref{sec:Realizations} for details. Then, for any
real-valued $u_{0}\in\mathcal{H}^{s+2,\gamma+2}(\mathbb{B})\oplus\mathbb{R}_{\omega}$,
there exists a unique global solution in the following sense: for
any $T>0$ there exists a unique $u\in H^{1}(0,T;\mathcal{H}^{s,\gamma}(\mathbb{B}))\cap L^{2}(0,T;\mathcal{D}(\Delta_{s}^{2}))$
solving (\ref{eq:mainequation}) on $[0,T]\times\mathbb{B}$. Furthermore, the solution $u$ satisfies the regularity
\begin{gather}
u\in\bigcap_{s\ge0}C^{\infty}((0,\infty);\mathcal{D}(\Delta_{s}^{2}))\label{extrareg}
\end{gather}
and
\begin{equation}
u\in C([0,\infty);\mathcal{H}^{s+2,\gamma+2}(\mathbb{B})\oplus\mathbb{R}_{\omega})\hookrightarrow C([0,\infty);C(\mathbb{B})).\label{contsolu}
\end{equation}

The bi-Laplacian domain we choose is
\begin{equation}\label{eq:defbilap}
\mathcal{D}(\Delta_{s}^{2})=\{u\in\mathcal{H}^{s+2,\gamma+2}(\mathbb{B})\oplus\mathbb{C}_{\omega}:\Delta u\in\mathcal{H}^{s+2,\gamma+2}(\mathbb{B})\oplus\mathbb{C}_{\omega}\},\end{equation}
where $\gamma$ is always as \eqref{gamma}. It satisfies 
\begin{gather}
\mathcal{D}(\Delta_{s}^{2})=\mathcal{D}(\Delta_{s,\min}^{2})\oplus\mathbb{C}_{\omega}\oplus\mathcal{E}_{\Delta^{2},\gamma}.\label{bilapintro1}
\end{gather}
Here $\mathcal{E}_{\Delta^{2},\gamma}$ is an $s$-independent finite dimensional space consisting of $C^{\infty}(\mathbb{B}^{\circ})$-functions, that in
local coordinates $(x,y)\in[0,1)\times\partial\mathcal{B}$, 
take the form $\omega(x)c(y)x^{\rho}\ln^{k}(x)$, $\rho\in\mathbb{C}$, $k\in\{0,1,2,3\}$, where $\mathbb{B}^{\circ}=\mathcal{B}\backslash\partial\mathcal{B}$ and $c\in C^{\infty}(\partial\mathbb{B})$. More precisely, there exists a discrete set of points $Z_{\Delta^2}$ in $\mathbb{C}$, determined only by the family of metrics $h(\cdot)$, such that the exponents $\rho$ coincide with the set $Z_{\Delta^2}\cap \{z\in\mathbb{C} : \mathrm{Re}(z)\in[\frac{n-7}{2}-\gamma,\frac{n-3}{2}-\gamma)\}$. The exponents $k$ are also determined by $h(\cdot)$. In particular, when $h(\cdot)=h$ is constant, the set $Z_{\Delta^2}$ and the exponents $k$ associated to each $\rho\in Z_{\Delta^2}$, are determined by $n$ and the spectrum of $\Delta_{h}$. The minimal domain $\mathcal{D}(\Delta_{s,\min}^{2})$ stands for the domain of the closure of $\Delta^2: C_{c}^{\infty}(\mathbb{B}^{\circ})\rightarrow \mathcal{H}^{s,\gamma}(\mathbb{B})$, and satisfies
\begin{gather}
\mathcal{H}^{s+4,\gamma+4}(\mathbb{B})\hookrightarrow\mathcal{D}(\Delta_{s,\min}^{2})\hookrightarrow\bigcap_{\varepsilon>0}\mathcal{H}^{s+4,\gamma+4-\varepsilon}(\mathbb{B}),\label{bilapintro2}
\end{gather}
while 
\begin{gather}\label{bilapintro3}
\mathcal{D}(\Delta_{s,\min}^{2})=\mathcal{H}^{s+4,\gamma+4}(\mathbb{B}) 
\end{gather}
provided that
\begin{gather}\label{bilapintro4}
\{\gamma+1,\gamma+3\}\cap\bigcup_{\lambda_{j}\in \sigma(\Delta_{h(0)})}\Big\{\pm\sqrt{\Big(\frac{\mathrm{dim}(\mathbb{B})-2}{2}\Big)^{2}-\lambda_{j}}\Big\}=\emptyset.
\end{gather}
Consequently, both spaces $\mathcal{D}(\Delta_{s}^{2})$ and $\mathcal{E}_{\Delta^{2},\gamma}$ are determined explicitly by $h(\cdot)$ and $\gamma$, see Corollary \ref{cor:(Bi-Laplacian)} for details. 

These results allow us to define for any $s\ge0$ a {\em semiflow} $T:[0,\infty)\times \mathcal{H}^{s+2,\gamma+2}(\mathbb{B})\oplus\mathbb{R}_{\omega}\to \mathcal{H}^{s+2,\gamma+2}(\mathbb{B})\oplus\mathbb{R}_{\omega}$
on real valued-functions by $T(t)u_{0}:=T(t,u_0)=u(t)$, see e.g. \cite[Chapter 1, Section 1.1]{Temam} for more details
on semiflows, also known as {\em semigroups}. Let $X_{1,0}^{s}$ be the space of all real-valued functions $u\in\mathcal{H}^{s+2,\gamma+2}(\mathbb{B})\oplus\mathbb{R}_{\omega}$
such that $\int_{\mathbb{B}}ud\mu_{g}=0$, where $d\mu_{g}$ is the
measure associated with the metric $g$. Then $T$ can be restricted
to $X_{1,0}^{s}$, see Section \ref{sec:Existence-and-regularity-1}. Our main results are the
following.

\begin{thm}
\label{thm:MainTheorem} Let $s\ge0$, $\gamma$ be as \eqref{gamma} and $\mathcal{D}(\Delta_{s}^{2})$ be the bi-Laplacian domain described in \eqref{eq:defbilap}-\eqref{bilapintro4}.\\
\emph{(i) (Global attractor)} The semiflow $T:[0,\infty)\times X_{1,0}^{s}\to X_{1,0}^{s}$ has
an $s$-independent global attractor
$\mathcal{A}\subset\cap_{r>0}\mathcal{D}(\Delta_{r}^{2})$. Moreover, if $B$ is a bounded set of $X_{1,0}^{s}$, then for any $r>0$, $T(t)B$ is, for sufficiently large $t$, a bounded set of $\mathcal{D}(\Delta_{r}^{2})$ and
$$
\lim_{t\to\infty}(\sup_{b\in B}\inf_{a\in\mathcal{A}}\left\Vert T(t)b-a\right\Vert _{\mathcal{D}(\Delta_{r}^{2})})=0.
$$
\emph{(ii) (Convergence to equilibrium)} If $u_{0}\in X_{1,0}^{0}$, then there exists a $u_{\infty}\in\cap_{r>0}\mathcal{D}(\Delta_{r}^{2})$ such that $\lim_{t\to\infty}T(t)u_{0}=u_{\infty}$, where the convergence occurs in $\mathcal{D}(\Delta_{r}^{2})$ for each $r\ge0$.
\end{thm}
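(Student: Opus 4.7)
The plan is to construct an absorbing set in a high-regularity Mellin-Sobolev space that embeds compactly into $X_{1,0}^{s}$, and then invoke the classical global attractor theorem, cf.\ \cite{Temam}. The starting point is the Ginzburg-Landau free energy $E(u)=\frac{1}{2}\int_{\mathbb{B}}|\nabla u|^{2}\, d\mu_{g}+\frac{1}{4}\int_{\mathbb{B}}(u^{2}-1)^{2}\, d\mu_{g}$ and the conserved mass $m(u)=\int_{\mathbb{B}}u\, d\mu_{g}$. Using the integration-by-parts structure of $\mathcal{D}(\Delta_{s}^{2})$, i.e.\ checking that the asymptotic summands in the decomposition \eqref{bilapintro1} generate no boundary contributions thanks to the range \eqref{gamma} of $\gamma$, I would first verify $\frac{d}{dt}m(u(t))=0$ and $\frac{d}{dt}E(u(t))=-\|\nabla(-\Delta u(t)+u^{3}(t)-u(t))\|_{L^{2}}^{2}\le 0$; combined with a Poincar\'e-type inequality for mean-zero functions on $\mathbb{B}$ this produces a bounded absorbing set in the energy space. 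The absorbing property is then promoted to $\mathcal{D}(\Delta_{r}^{2})$ for every $r\ge 0$ by a parabolic bootstrap: the smoothing property \eqref{extrareg} together with the $L^{p}$-maximal regularity estimates underlying \cite{RS1,RS2,LopesRoidos} convert a uniform low-norm bound on a time window into a uniform higher-norm bound on a slightly later window, which iterates to an absorbing ball $B_{r}\subset\mathcal{D}(\Delta_{r}^{2})$. Since $\mathcal{D}(\Delta_{r}^{2})\hookrightarrow X_{1,0}^{s}$ is compact for $r$ sufficiently large, the standard theorem produces $\mathcal{A}$, and both $\mathcal{A}\subset\bigcap_{r>0}\mathcal{D}(\Delta_{r}^{2})$ and the $s$-independence follow from invariance and the uniform bounds on each $B_{r}$.

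\textbf{Approach to (ii).} The natural tool is the \L ojasiewicz-Simon gradient inequality. I would write \eqref{eq:mainequation} as an $H^{-1}_{0}$-gradient flow of $E$ on mean-zero functions, which makes sense because $-\Delta$ is invertible on such functions on $\mathbb{B}$, and then establish, near each equilibrium $\varphi$ (a solution of $-\Delta\varphi+\varphi^{3}-\varphi=\text{const}$), an inequality of the form $|E(u)-E(\varphi)|^{1-\theta}\le C\|\nabla_{H^{-1}_{0}}E(u)\|_{H^{-1}_{0}}$ with $\theta\in(0,1/2]$. The abstract scheme applies once one verifies that $E$ is analytic on a suitable Banach space---using \eqref{contsolu} and the fact that $\mathcal{H}^{s+2,\gamma+2}(\mathbb{B})\oplus\mathbb{R}_{\omega}$ is consequently a Banach algebra---and that the Hessian at $\varphi$, essentially $u\mapsto -\Delta u+(3\varphi^{2}-1)u$, is Fredholm of index zero on the relevant Mellin-Sobolev scale. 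Combined with the strict Lyapunov property of $E$ and the precompactness of trajectories coming from (i), Simon's classical finite-length argument then yields $T(t)u_{0}\to u_{\infty}$ in a low-regularity norm, and the convergence is upgraded to $\mathcal{D}(\Delta_{r}^{2})$ for each $r$ by applying the smoothing estimates of \cite{LopesRoidos} to the shifts $u(\cdot+t)$.

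\textbf{Main obstacle.} The hard part is the \L ojasiewicz-Simon step, and specifically the Fredholm property of the linearized Cahn-Hilliard operator $u\mapsto\Delta^{2}u-\Delta((3\varphi^{2}-1)u)$ on the Mellin-Sobolev scale dictated by $\gamma$. This is not off-the-shelf: it requires the cone-operator Fredholm theory from Section~\ref{sec:Realizations} applied to a non-selfadjoint perturbation, with careful analysis of the conormal symbol so as to exclude, via \eqref{gamma} and \eqref{bilapintro4}, any weights at which the operator fails to be Fredholm. A pervasive secondary subtlety is that every energy identity and gradient computation has to be justified in the presence of the asymptotic summands in the bi-Laplacian domain; the condition \eqref{gamma} is designed precisely so that the corresponding boundary contributions vanish, but this must be verified for each identity we use.
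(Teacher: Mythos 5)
Your plan for part (i) is essentially the paper's own strategy: conservation of the mean and the Lyapunov functional give an absorbing set in a weak norm (the paper, following Temam, obtains it in $H_{0}^{-1}(\mathbb{B})$ and then in $H_{0}^{1}(\mathbb{B})$, see Proposition \ref{prop:alpha1/4}), smoothing then promotes it to $\mathcal{D}(\Delta_{r}^{2})$ for every $r$ (the paper runs this bootstrap through the variation-of-constants formula, fractional powers of $A_{s}=-(1-\Delta_{s})^{2}$ and complex interpolation of Mellin--Sobolev spaces, Proposition \ref{thm:Interpolation}, rather than $L^{p}$-maximal regularity, but the idea is the same), and an abstract attractor theorem finishes. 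One point you pass over: the statement demands attraction of bounded sets of $X_{1,0}^{s}$ \emph{in the $\mathcal{D}(\Delta_{r}^{2})$-norm}, which the classical one-space attractor theorem does not deliver; the paper proves a two-space variant of Robinson's theorem (Theorem \ref{thm:Globalattractor} and Corollary \ref{cor:Tofiindattractors}) exactly to get attraction in the stronger norm $Y$ out of a compact absorbing set in $Y$. Your absorbing balls at every regularity level contain the needed input, but this step has to be stated and proved.

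The genuine gap is in part (ii), at precisely the point you label the ``main obstacle.'' You propose to establish the Fredholm property of the linearization ($u\mapsto-\Delta u+(3\varphi^{2}-1)u$, or its fourth-order version $u\mapsto\Delta^{2}u-\Delta((3\varphi^{2}-1)u)$) on the Mellin--Sobolev scale via cone-operator Fredholm theory and conormal symbol analysis, and you leave this unresolved. But this machinery is not needed, and the step you defer is exactly the step that has to be closed for the Lojasiewicz--Simon inequality. The paper applies the abstract result (Theorem \ref{thm:HarauxJendoubi}) with $V=H_{0}^{1}(\mathbb{B})$ and $V^{*}=H_{0}^{-1}(\mathbb{B})$, where everything becomes elementary functional analysis: the Hessian is the second-order operator
$$
h\mapsto-\Delta h+(3\varphi^{2}-1)h-3\fint_{\mathbb{B}}\varphi^{2}h\,d\mu_{g},
$$
and $\Delta:H_{0}^{1}(\mathbb{B})\to H_{0}^{-1}(\mathbb{B})$ is an isomorphism by the Riesz representation theorem (Proposition \ref{prop:deltaisomorphism}); the multiplication term is compact from $H^{1}(\mathbb{B})$ to $H^{-1}(\mathbb{B})$ by Lemma \ref{prop:uemu2v}, which uses only the weighted embeddings of Corollary \ref{cor:Mellinsobolevembedding} and the compactness of $H^{1}(\mathbb{B})\overset{c}{\hookrightarrow}\mathcal{H}^{0,\beta}(\mathbb{B})$; and the mean-value term has finite rank. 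Hence the Hessian is a compact perturbation of an isomorphism, so Fredholm of index zero, with no cone calculus at all (Theorem \ref{thm:LSpractical}). The fourth-order structure of the equation never requires its own Fredholm theory: in the finite-length argument it is absorbed by the same isomorphism together with Corollary \ref{lem:EquivalenceofnormDeltauandu-1}, which converts $\left\Vert D\mathcal{L}(u)\right\Vert _{H_{0}^{-1}(\mathbb{B})}$ into $\left\Vert \partial_{t}u\right\Vert _{H_{0}^{-1}(\mathbb{B})}$. Likewise, analyticity of the energy does not need any Banach algebra property of $\mathcal{H}^{s+2,\gamma+2}(\mathbb{B})\oplus\mathbb{R}_{\omega}$: on $H_{0}^{1}(\mathbb{B})$ the functional $\mathcal{L}$ is a polynomial, bounded thanks to $H^{1}(\mathbb{B})\hookrightarrow L^{4}(\mathbb{B})$. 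So your overall scheme for (ii) (Lojasiewicz--Simon inequality, finite-length estimate, upgrade of the convergence by precompactness of the trajectory from part (i)) coincides with the paper's, but as written the proposal does not prove its decisive step, and the route you sketch for it is far harder than what the problem requires.
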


The definition of global attractor is recalled in Section \ref{sec:Existence-and-regularity-1}.
For proving part (i) of Theorem \ref{eq:mainequation}, we follow
the strategy of Temam \cite{Temam} to obtain estimates in a lower regularity space
$H_{0}^{-1}(\mathbb{B})$, see Definition \ref{def:H-10}, and of \cite{Song}
for obtaining higher regularity. For convergence to equilibrium, we first
obtain the Lojasiewicz-Simon inequality due to \cite{Simon}, and proceed
as \cite{Chill}, \cite{HJ} and \cite{Piotr}.

Concerning real-life applications of the above approach, recall first
that the physical effects described by CH, as well as other evolution
equations, occur in reality in many different types of domains and
surfaces (manifolds), which are usually not smooth: many of them have
edges, conical points, cusps, or even combinations of these and other
types of singularities. In this context, conic manifolds are fundamental
and a natural place to start. They describe simple point singularities,
which, apart from their intrinsic interest, can be used to build more
general ones \cite{Schu2}, \cite{Schu}.

Moreover, whenever we are studying a smooth $(n+1)$-dimensional Riemannian
manifold $\mathcal{M}$ endowed with a Riemannian metric $f$, an
important question is: \emph{how does the local geometry on $\mathbb{M}=(\mathcal{M},f)$
affect the evolution?} An answer to this question arises as follows:
fix a point $o$ on $\mathcal{M}$ and denote by $d(o,z)$ the geodesic
distance between $o$ and $z\in\mathcal{M}\backslash\{o\}$, induced
by the metric $f$. There exists an $r>0$ such that $(x,y)\in(0,r)\times\mathbb{S}^{n}$
are local coordinates around $o$ and moreover, the metric in these
coordinates becomes $f=dx^{2}+x^{2}f_{\mathbb{S}^{n}}(x)$, where
$\mathbb{S}^{n}=\{z\in\mathbb{R}^{n+1}\,:\,|z|=1\}$ is the unit sphere
and $x\mapsto f_{\mathbb{S}^{n}}(x)$ is a smooth family of Riemannian
metrics on $\mathbb{S}^{n}$. In case of $f_{\mathbb{S}^{n}}(\cdot)$
being smooth up to $x=0$, we can regard $((\mathcal{M}\backslash\{0\})\cup(\{0\}\times\mathbb{S}^{n}),f)$
as a conic manifold with one isolated conical singularity at $o$.
On the other hand, since our problem involves the Laplacian, it becomes
now degenerate. However, an application of our results shows that
the asymptotic expansion of the solutions near $o$ is provided by
the expansion \eqref{bilapintro1}, where the boundary
Laplacian $\Delta_{h(x)}$ now has to be replaced by the Laplacian
$\Delta_{f_{\mathbb{S}^{n}}(x)}$ on $(\mathbb{S}^{n},f_{\mathbb{S}^{n}}(x))$.
Hence, in particular, through the structure of the spaces $\mathcal{E}_{\Delta^{2},\gamma}$,
we obtain an interplay between the spectrum of $\Delta_{f_{\mathbb{S}^{n}}(x)}$
and the evolution.

Though the strategies are mostly well established, the technical results that allow us to use them in the context of conical singularities are not, and, therefore, the strategies have to be adapted to this situation. For this reason new results on interpolation and embedding of Mellin-Sobolev spaces are developed in this article.

In Section 2, we define suitable function spaces to work on conic
manifolds and study their embeddings. Section 3
is devoted to the domain description and the properties of the Laplacian
and bi-Laplacian and to provide some facts about the complex interpolation of those spaces. Part (i) of Theorem \ref{thm:MainTheorem} is
proved in Section 4 and part (ii) in Section 5.

\section{Function spaces\label{sec:Function-spaces}}

Fix a smooth non-negative
function $\omega\in C^{\infty}(\mathbb{B})$ supported on the collar
neighborhood $(x,y)\in[0,1)\times\partial\mathcal{B}$ such that $\omega$
depends only on $x$ and $\omega=1$ near $\{0\}\times\partial\mathcal{B}$.
Moreover denote by $C_{c}^{\infty}$ the space of smooth compactly
supported functions and by $H^{s}$, $s\in\mathbb{R}$, the usual
Bessel potential spaces defined using the $L^{2}$-norm.
\begin{defn}
[Mellin-Sobolev spaces] \label{def:mellinsobolevspaces} Let $\gamma\in\mathbb{R}$
and consider the map 
$$
M_{\gamma}:C_{c}^{\infty}(\mathbb{R}_{+}\times\mathbb{R}^{n})\rightarrow C_{c}^{\infty}(\mathbb{R}^{n+1})\quad\mbox{defined by}\quad u(x,y)\mapsto e^{(\gamma-\frac{n+1}{2})x}u(e^{-x},y).
$$
Let $\kappa_{j}:U_{j}\subseteq\partial\mathcal{B}\rightarrow\mathbb{R}^{n}$,
$j\in\{1,...,N\}$, $N\in\mathbb{N}\backslash\{0\}$, $\mathbb{N}:=\{0,1,2,...\}$, be a covering
of $\partial\mathcal{B}$ by coordinate charts and let $\{\phi_{j}\}_{j\in\{1,...,N\}}$
be a subordinated partition of unity. For any $s,\gamma\in\mathbb{R}$,
the Mellin Sobolev space $\mathcal{H}^{s,\gamma}(\mathbb{B})$ is
defined to be the space of all distributions $u$ on the interior $\mathbb{B}^{\circ}$
such that the norm
\begin{equation}
\|u\|_{\mathcal{H}^{s,\gamma}(\mathbb{B})}=\sum_{j=1}^{N}\|M_{\gamma}(1\otimes\kappa_{j})_{\ast}(\omega\phi_{j}u)\|_{H^{s}(\mathbb{R}^{n+1})}+\|(1-\omega)u\|_{H^{s}(2\mathbb{B})}\label{mellinsobolev}
\end{equation}
is defined and finite, where $2\mathbb{B}$ is the double of $\mathbb{B}$
and $\ast$ refers to the push-forward of distributions. Different
choices of $\omega$, covering and partition of unity give us the
same spaces with equivalent norms. The space $\mathcal{H}^{s,\gamma}(\mathbb{B})$ is a Banach algebra, up to an equivalent norm, whenever $s>(n+1)/2$ and $\gamma\ge(n+1)/2$, see \cite[Lemma 3.2]{RS3}.

If $s\in\mathbb{N}$, then $\mathcal{H}^{s,\gamma}(\mathbb{B})$
coincides with the space of all functions $u$ in $H_{\text{loc}}^{s}(\mathbb{B}^{\circ})$
that satisfy 
\begin{equation}
x^{\frac{n+1}{2}-\gamma}(x\partial_{x})^{k}\partial_{y}^{\alpha}(\omega(x)u(x,y))\in L^{2}([0,1)\times\partial\mathcal{B},\sqrt{\det(h(x))}\frac{dx}{x}dy),\quad k+|\alpha|\le s.\label{mellinsobolevinteger}
\end{equation}
\end{defn}

In Section \ref{sec:Realizations}, we will associate the Mellin-Sobolev spaces with the Laplacian and bi-Laplacian.

\begin{rem}
\label{rem:xmellinL2weight}Let $\mathsf{x}:\mathbb{B}\to[0,1]$ be
a smooth positive function on $\mathbb{B}^{\circ}$ that is equal
to $\mathsf{x}(x,y)=x$ on the collar neighborhood $[0,1)\times\partial\mathcal{B}$.
Then $u\in\mathcal{H}^{0,\gamma}(\mathbb{B})$ iff $\mathsf{x}^{-\gamma}u\in L^{2}(\mathbb{B})$,
where $L^{2}(\mathbb{B})=\mathcal{H}^{0,0}(\mathbb{B})$. We define
the spaces $L^{p}(\mathbb{B})$ using the measure $d\mu_{g}$ induced
by the metric $g$. Note that $d\mu_{g}=\sqrt{\det(h(x))}x^{n}dxdy$
on the collar neighborhood. Moreover, for any $\alpha\in\mathbb{R}$, let $x^{\alpha}L^{p}(\mathbb{B}):=\{u:\int_{\mathbb{B}}|x^{-\alpha}u|^{p}d\mu_{g}<\infty\}$. Finally, recall that the inner product in $\mathcal{H}^{0,0}(\mathbb{B})$ induces an identification of the dual space of $\mathcal{H}^{s,\gamma}(\mathbb{B})$ with $\mathcal{H}^{-s,-\gamma}(\mathbb{B})$, see e.g. \cite[Lemma 3.2 (ii)]{LopesRoidos}.
\end{rem}

Besides the Mellin-Sobolev spaces, we define the following space.
\begin{defn}
Let $H^{1}(\mathbb{B})$ be the completion of $C_{c}^{\infty}(\mathbb{B}^{\circ})$
with respect to the inner product
$$
(u,v)_{H^{1}(\mathbb{B})}=\int_{\mathbb{B}}u\overline{v}d\mu_{g}+\int_{\mathbb{B}}\left\langle \nabla u,\overline{\nabla v}\right\rangle d\mu_{g},
$$
where $\nabla$ and $\left\langle \cdot,\cdot\right\rangle $ are defined
by the conical metric $g$.
\end{defn}

We investigate now certain properties of the space $H^{1}(\mathbb{B})$
and its connection with $\mathcal{H}^{s,\gamma}(\mathbb{B})$.
\begin{rem}
For the following computations, we note that\\
(1) The boundedness of $\int_{\mathbb{B}}\left|u\right|^{2}d\mu_{g}$
is equivalent to
$$
u\in L_{\text{loc}}^{2}(\mathbb{B}^{\circ})\,\text{and}\,(x,y)\mapsto x^{\frac{n+1}{2}}\omega(x)u(x,y)\in L^{2}([0,1]\times\partial\mathcal{B},\sqrt{\det(h(x))}\frac{dx}{x}dy).
$$
(2) If $\int_{\mathbb{B}}\left|u\right|^{2}d\mu_{g}<\infty$, then the boundedness of $\int_{\mathbb{B}}\left\langle \nabla u,\nabla\overline{u}\right\rangle d\mu_{g}$ is equivalent to
$$
u\in H_{\text{loc}}^{1}(\mathbb{B}^{\circ})\,\text{and}\,(x,y)\mapsto x^{\frac{n-1}{2}}(x\partial_{x})^{k}\partial_{y}^{\alpha}(\omega(x)u(x,y))\in L^{2}([0,1]\times\partial\mathcal{B},\sqrt{\det(h(x))}\frac{dx}{x}dy),\,k+\left|\alpha\right|=1.
$$

The last statement can be easily proved once we recall that in local
coordinates of $[0,1)\times\partial\mathcal{B}$ we have
$$
\left\langle \nabla u,\nabla\overline{v}\right\rangle =x^{-2}(x\partial_{x}u)(x\partial_{x}\overline{v})+x^{-2}\sum_{i,j=1}^{n}h^{ij}(x,y)(\partial_{y_{i}}u)(\partial_{y_{j}}\overline{v}).
$$
\end{rem}

Along this paper, we use $\hookrightarrow$ and $\overset{c}{\hookrightarrow}$ to denote continuous and compact embedding, respectively. We recall that $\mathcal{H}^{s,\gamma}(\mathbb{B})\hookrightarrow\mathcal{H}^{s',\gamma'}(\mathbb{B})$, when $s\ge s'$ and $\gamma\ge\gamma'$, and $\mathcal{H}^{s,\gamma}(\mathbb{B})\overset{c}{\hookrightarrow}\mathcal{H}^{s',\gamma'}(\mathbb{B})$, when $s>s'$ and $\gamma>\gamma'$, see \cite[Theorem 2.1.53]{Schu2}.

\begin{prop}
\label{prop:inclusionH1mellin} For
any $\beta<1$, the following inclusions hold
$$
\mathcal{H}^{1,1}(\mathbb{B})\oplus\mathbb{C}_{\omega}\hookrightarrow H^{1}(\mathbb{B})\hookrightarrow\mathcal{H}^{1,\beta}(\mathbb{B}),
$$
where $u\oplus v$ is identified with $u+v$ and the first inclusion is just $u\oplus v \mapsto u+v$. In particular, $H^{1}(\mathbb{B})\overset{c}{\hookrightarrow}{H}^{0,0}(\mathbb{B})$.
\end{prop}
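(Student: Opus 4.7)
Both inclusions reduce to comparing weighted $L^2$ integrals on the collar $[0,1)\times\partial\mathcal{B}$. The key ingredients are the integer-smoothness description \eqref{mellinsobolevinteger} of the Mellin-Sobolev norm and the two equivalences stated in the remark right above the proposition (items (1)--(2), giving the $L^2(\mathbb{B})$ and gradient conditions in terms of weighted $L^2$-integrals with the measure $\sqrt{\det h}\,\tfrac{dx}{x}dy$).

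\emph{First inclusion.} Take $u\in\mathcal{H}^{1,1}(\mathbb{B})$. Then $x^{(n-1)/2}(x\partial_x)^k\partial_y^\alpha(\omega u)\in L^2(\sqrt{\det h}\,\tfrac{dx}{x}dy)$ for every $k+|\alpha|\le 1$. The case $k+|\alpha|=1$ is exactly the derivative condition of item (2), while the case $k=|\alpha|=0$, combined with the pointwise bound $x^n\le x^{n-2}$ on $(0,1]$, yields the $L^2(\mathbb{B})$ condition of item (1). Away from the collar $u\in H^1_{\mathrm{loc}}$ by standard elliptic theory, so $u\in H^1(\mathbb{B})$ with a continuous bound on norms. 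For $c\omega\in\mathbb{C}_{\omega}$, approximating by smooth cutoffs with a logarithmic profile $x\mapsto\log(kx)/\log k$ on $[1/k,1]$ produces an $H^1(\mathbb{B})$-Cauchy sequence in $C_c^\infty(\mathbb{B}^\circ)$ whose limit is $c\omega$, uniformly in $n\ge 1$; continuity on this finite-dimensional summand is automatic.

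\emph{Second inclusion.} Since $H^1(\mathbb{B})$ is the completion of $C_c^\infty(\mathbb{B}^\circ)$, it suffices to prove the estimate $\|u\|_{\mathcal{H}^{1,\beta}(\mathbb{B})}\le C\|u\|_{H^1(\mathbb{B})}$ for $u\in C_c^\infty(\mathbb{B}^\circ)$ and then pass to the limit, identifying the limit via the continuous embedding $\mathcal{H}^{1,\beta}(\mathbb{B})\hookrightarrow\mathcal{H}^{0,0}(\mathbb{B})=L^2(\mathbb{B})$. For such a $u$, the function $v:=\omega u$ vanishes near $\{x=0\}$ and near $\{x=1\}$. Setting $a:=n-2\beta$, the assumption $\beta<1$ gives $a+1>n-1\ge 0$, so integration by parts in $x$ followed by Cauchy--Schwarz produces the weighted one-dimensional Hardy inequality
\[
\int_0^1 |v(x,y)|^2 x^{a}\,dx\le \frac{4}{(a+1)^2}\int_0^1 |x\partial_x v(x,y)|^2 x^{a}\,dx
\]
uniformly in $y\in\partial\mathcal{B}$. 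Integrating against $\sqrt{\det h(x)}\,dy$ and using $x^{n-2\beta}\le x^{n-2}$ on $(0,1]$ (again since $\beta<1$) bounds the right-hand side by $\int|x\partial_x v|^2 x^{n-2}\sqrt{\det h}\,dx\,dy$, which is finite by item (2). This controls the $k=|\alpha|=0$ piece of the $\mathcal{H}^{1,\beta}$ norm. For the $k+|\alpha|=1$ pieces, the Mellin weight $x^{(n+1)/2-\beta}$ is dominated by $x^{(n-1)/2}$ on $(0,1]$ whenever $\beta<1$, so the corresponding integrals are bounded directly by the $H^1$ gradient term. The compact embedding $H^1(\mathbb{B})\overset{c}{\hookrightarrow}\mathcal{H}^{0,0}(\mathbb{B})$ then follows by composing with the compact embedding $\mathcal{H}^{1,\beta}(\mathbb{B})\overset{c}{\hookrightarrow}\mathcal{H}^{0,0}(\mathbb{B})$ recalled immediately before the proposition, for any choice of $\beta\in(0,1)$.

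\emph{Main obstacle.} The only delicate step is the order-zero part of the second inclusion: merely knowing $u\in L^2(\mathbb{B})$ gives $x^{n/2}u\in L^2(dx\,dy)$, whereas $\mathcal{H}^{1,\beta}$ demands the stronger decay $x^{(n-2\beta)/2}u\in L^2(dx\,dy)$. Converting gradient control into this extra Mellin weight near the tip is exactly what the weighted Hardy inequality accomplishes. The restriction $\beta<1$ is precisely what keeps the Hardy constant $(n+1-2\beta)^{-1}$ uniformly bounded in $n\ge 1$, and it is sharp: for $n=1$ the function $\omega$ lies in $H^1(\mathbb{B})$ but not in $\mathcal{H}^{0,1}(\mathbb{B})$, ruling out $\beta=1$.
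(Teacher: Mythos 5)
Your proof is correct, and its overall skeleton matches the paper's: reduce to the collar and to $u\in C_c^\infty(\mathbb{B}^\circ)$, compare weights ($x^{n}\le x^{n-2}$, respectively $x^{(n+1)/2-\beta}\le x^{(n-1)/2}$ for $\beta\le 1$) for the easy terms, and control the order-zero term of the second inclusion by a Hardy-type argument. Two ingredients genuinely differ. First, for the order-zero term the paper writes $\omega u=-\int_x^1\partial_s(\omega(s)u(s,y))\,ds$ and applies Cauchy--Schwarz with the splitting $s^{-n/2}s^{n/2}$; this is the integral form of exactly the weighted Hardy inequality you obtain by integration by parts, so the two arguments are equivalent, but yours has the merit of an explicit constant $4/(n+1-2\beta)^{2}$, which makes transparent why the constraint $\beta<1$ is binding only when $n=1$ and supports your sharpness remark ($\omega\in H^1(\mathbb{B})\setminus\mathcal{H}^{0,1}(\mathbb{B})$ when $n=1$). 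Second, and more substantially, for $\mathbb{C}_\omega\hookrightarrow H^{1}(\mathbb{B})$ the paper runs a two-step limit --- first $\chi_r\mathsf{x}^{\varepsilon}\to\mathsf{x}^{\varepsilon}$ in $H^{1}(\mathbb{B})$ as $r\to0$, then $\mathsf{x}^{\varepsilon}\to1$ as $\varepsilon\to0$ --- whereas you use a single logarithmic cutoff $\log(kx)/\log k$, i.e.\ the classical zero-capacity argument; the cutoff's gradient energy is $O(1/(\log k)^{2})$ for $n\ge2$ and $O(1/\log k)$ for $n=1$, so this works in one pass for all $n\ge1$ and is arguably cleaner. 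Two small points to tighten: the logarithmic profile is only Lipschitz, so it must be mollified to stay inside $C_c^\infty(\mathbb{B}^\circ)$; and in the first inclusion, since $H^{1}(\mathbb{B})$ is \emph{defined as a completion}, finiteness of $\int_{\mathbb{B}}(|u|^{2}+\langle\nabla u,\nabla\overline{u}\rangle)\,d\mu_g$ for $u\in\mathcal{H}^{1,1}(\mathbb{B})$ does not by itself give membership in $H^{1}(\mathbb{B})$ --- you should state, as the paper does, that it suffices to prove the estimate for $u\in C_c^\infty(\mathbb{B}^\circ)$ and then invoke density of $C_c^\infty(\mathbb{B}^\circ)$ in $\mathcal{H}^{1,1}(\mathbb{B})$; your careful treatment of $\mathbb{C}_\omega$ shows you recognize that this completion issue is the real subtlety of the statement.
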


\begin{proof}
We proceed in several steps. Let us denote by $C$ positive
constants that can change along the proof. For simplicity we ignore the term $\sqrt{\text{\ensuremath{\det}}(h(x))}$ in the proof, as it is uniformly bounded from above and below, and we abuse the notation $\int_{\partial\mathbb{B}}$ since the computations are made in local coordinates. We also note that it suffices to check the inclusion on the collar neighborhood and for functions $u\in C_c^\infty(\mathbb{B}^\circ)$.\\
\emph{Step 1:} $\mathcal{H}^{1,1}(\mathbb{B})\hookrightarrow H^{1}(\mathbb{B})$. We have
$$
\int_{0}^{1}\int_{\partial\mathbb{B}}\left|\omega(x)u(x,y)\right|^{2}x^{n}dxdy\le\int_{0}^{1}\int_{\partial\mathbb{B}}\left|x^{\frac{n+1}{2}-1}\omega(x)u(x,y)\right|^{2}\frac{dx}{x}dy.
$$
Hence it is clear that
$$
\int_{\mathbb{B}}\left|u\right|^{2}d\mu_{g}\le C\left\Vert u\right\Vert _{\mathcal{H}^{1,1}(\mathbb{B})}^{2}.
$$
Moreover
\begin{eqnarray*}\lefteqn{\int_{0}^{1}\int_{\partial\mathbb{B}}\frac{1}{x^{2}}\left(\left|x\partial_{x}(\omega u)\right|^{2}+\sum_{i,j=1}^{n}h^{ij}(x,y)\partial_{y_{i}}(\omega u)\partial_{y_{j}}(\omega\overline{u})\right)x^{n}dxdy}\\
 & \le& C\sum_{k+\left|\alpha\right|=1}\int_{0}^{1}\int_{\partial\mathbb{B}}\left|x^{\frac{n-1}{2}}(x\partial_{x})^{k}\partial_{y}^{\alpha}(\omega(x)u(x,y))\right|^{2}\frac{dx}{x}dy,
\end{eqnarray*}
which implies that
$$
\int_{\mathbb{B}}\left\langle \nabla u,\overline{\nabla u}\right\rangle d\mu_{g}\le C\left\Vert u\right\Vert _{\mathcal{H}^{1,1}(\mathbb{B})}^{2}.
$$
We conclude that
$$
\left\Vert u\right\Vert _{H^{1}(\mathbb{B})}\le C\left\Vert u\right\Vert _{\mathcal{H}^{1,1}(\mathbb{B})}.
$$
\emph{Step 2}: For each $\varepsilon>0$, we have $\mathsf{x}^{\varepsilon}\in H^{1}(\mathbb{B})$. Let $0<r<1$ and $\chi_{r}:\mathbb{B}^{\circ}\to[0,1]$ be such that $\chi_{r}(x,y)=1-\omega(x/r)$, for $(x,y)\in[0,1)\times\partial\mathcal{B}$
and $\chi_{r}$ be equal to 1 outside the collar neighborhood. It
is enough to prove that $\lim_{r\to0}\chi_{r}\mathsf{x}^{\varepsilon}=\mathsf{x}^{\varepsilon}$
in $H^{1}(\mathbb{B})$, as $\chi_{r}\mathsf{x}^{\varepsilon}\in C_{c}^{\infty}(\mathbb{B}^{\circ})$.
For this, we must prove that
$$
\begin{aligned}\text{(i)}\quad & \lim_{r\to0}\int_{0}^{1}\int_{\partial\mathbb{B}}\left|\omega(\chi_{r}x^{\varepsilon}-x^{\varepsilon})\right|^{2}x^{n}dxdy =0,\\
\text{(ii)}\quad & \lim_{r\to0}\int_{0}^{1}\int_{\partial\mathbb{B}}\left|\partial_{y_{j}}(\omega\chi_{r}x^{\varepsilon})-\partial_{y_{j}}(\omega x^{\varepsilon})\right|^{2}x^{n-2}dxdy =0,\\
\text{(iii)} \quad & \lim_{r\to0}\int_{0}^{1}\int_{\partial\mathbb{B}}\left|x\partial_{x}(\omega\chi_{r}x^{\varepsilon})-x\partial_{x}(\omega x^{\varepsilon})\right|^{2}x^{n-2}dxdy =0.
\end{aligned}
$$
Note that (i) follows directly from the dominated convergence
theorem and (ii) is identically zero. For (iii), we have that the integral is smaller or equal to two times
$$
\int_{0}^{1}\int_{\partial\mathbb{B}}\left|\chi_{r}x\partial_{x}(\omega x^{\varepsilon})-x\partial_{x}(\omega x^{\varepsilon})\right|^{2}x^{n-2}dxdy+\int_{0}^{1}\int_{\partial\mathbb{B}}\left|\omega(x)x^{\varepsilon}\partial_{x}\chi_{r}\right|^{2}x^{n}dxdy.
$$
Only the last term is important, as we can handle the first one directly with dominated convergence theorem. Note that $\left|\omega(x)x^{\varepsilon}\partial_{x}\chi_{r}\right|^{2}x^{n}=\left|\omega(x)x/r(\partial_{x}\omega)(x/r)\right|^{2}x^{2\varepsilon+n-2}$
and that, for $n\ge1$, the integrand is smaller than the integrable function $\left\Vert x\partial_{x}\omega\right\Vert _{L^{\infty}([0,\infty))}^{2}x^{2\varepsilon+n-2}$.
Moreover
$$
\lim_{r\to0}\left|\omega(x)x/r(\partial_{x}\omega)(x/r)\right|^{2}x^{2\varepsilon+n-2}=0.
$$
The result now follows again
by the dominated convergence theorem.\\
\emph{Step 3}: $\mathbb{C}_{\omega}\hookrightarrow H^{1}(\mathbb{B})$. It suffices to show that the constant function equal to one belongs to $H^{1}(\mathbb{B})$
by showing that $\lim_{\varepsilon\to0}\mathsf{x}^{\varepsilon}=1$ in $H^{1}(\mathbb{B})$. To this end, it is enough to show that
$$
\begin{aligned}\text{(i)} \quad & \lim_{\varepsilon\to0}\int_{0}^{1}\int_{\partial\mathbb{B}}\left|\omega(1-x^{\varepsilon})\right|^{2}x^{n}dxdy =0,\\
\text{(ii)}\quad & \int_{0}^{1}\int_{\partial\mathbb{B}}\left|\partial_{y_{j}}(\omega)-\partial_{y_{j}}(\omega x^{\varepsilon})\right|^{2}x^{n-2}dxdy =0,\\
\text{(iii)}\quad & \lim_{\varepsilon\to0}\int_{0}^{1}\int_{\partial\mathbb{B}}\left|x\partial_{x}(\omega)-x\partial_{x}(\omega x^{\varepsilon})\right|^{2}x^{n-2}dxdy =0.
\end{aligned}
$$
Again (i) follows directly from the dominated convergence theorem, (ii) is identically zero as the functions do not depend on $y$, and the integral in (iii) is smaller than two times
$$
\int_{0}^{1}\int_{\partial\mathbb{B}}\left|x\partial_{x}(\omega)-x^{\varepsilon}x\partial_{x}(\omega)\right|^{2}x^{n-2}dxdy+\int_{0}^{1}\int_{\partial\mathbb{B}}\left|\omega x\partial_{x}(x^{\varepsilon})\right|^{2}x^{n-2}dxdy.
$$
The first term can be dealt again by
dominated convergence. For the second one, note that
$$
\int_{0}^{1}\left|\omega x\partial_{x}(x^{\varepsilon})\right|^{2}x^{n-2}dx\le\frac{\varepsilon^{2}}{2\varepsilon+n-1},
$$
and the last term goes to zero, as $\varepsilon$ goes to zero.\\
\emph{Step 4}: If $\beta<1$, then $H^{1}(\mathbb{B})\hookrightarrow\mathcal{H}^{1,\beta}(\mathbb{B})$. By density, it is enough to show that there is a constant $C>0$ such
that $\left\Vert u\right\Vert _{\mathcal{H}^{1,\beta}(\mathbb{B})}\le C\left\Vert u\right\Vert _{H^{1}(\mathbb{B})}$,
for all $u\in C_{c}^{\infty}(\mathbb{B}^{\circ})$.\\
If $k+|\alpha|=1$, then, in local coordinates on $[0,1)\times\partial\mathcal{B}$,
we have
$$
\left|x^{\frac{n+1}{2}-\beta}(x\partial_{x})^{k}\partial_{y}^{\alpha}(\omega(x)u(x,y))\right|\le\left|x^{\frac{n-1}{2}}(x\partial_{x})^{k}\partial_{y}^{\alpha}(\omega(x)u(x,y))\right|.
$$
If $k+|\alpha|=0$, then as
$$
\omega(x)u(x,y)=-\int_{x}^{1}\frac{\partial}{\partial s}(\omega(s)u(s,y))ds,
$$
we have
\begin{eqnarray*}\lefteqn{\int_{\partial\mathbb{B}}\int_{0}^{1}\left|x^{\frac{n+1}{2}-\beta}\omega(x)u(x,y)\right|^{2}\frac{dx}{x}dy}\\
 & =&\int_{\partial\mathbb{B}}\int_{0}^{1}x^{n-2\beta}\left|\int_{x}^{1}s^{-\frac{n}{2}}s^{\frac{n}{2}}\frac{\partial}{\partial s}\left(\omega(s)u(s,y)\right)ds\right|^{2}dxdy\\
 &\le&\int_{\mathbb{B}}\int_{0}^{1}x^{n-2\beta}\left(\int_{x}^{1}s^{-n}ds\right)\left(\int_{x}^{1}s^{n}\left|\frac{\partial}{\partial s}(\omega(s)u(s,y))\right|^{2}ds\right)dxdy\\
 & = &\int_{0}^{1}x^{n-2\beta}\left(\int_{x}^{1}s^{-n}ds\right)\left(\int_{\partial\mathbb{B}}\int_{x}^{1}\left|s^{\frac{n-1}{2}}\left(s\frac{\partial}{\partial s}\right)\left(\omega(s)u(s,y)\right)\right|^{2}\frac{ds}{s}dy\right)dx\\
 & \le &\int_{0}^{1}x^{n-2\beta}\left(\int_{x}^{1}s^{-n}ds\right)dx\left\Vert u\right\Vert _{H^{1}(\mathbb{B})}^{2},
\end{eqnarray*}
where we have used Cauchy-Schwarz in the first inequality. The last integral is finite for $n\ge1$ and $\beta<1$.
\end{proof}
For functions in $H^{1}(\mathbb{B})$, we define
$$
(u,v)_{H_{0}^{1}(\mathbb{B})}=\int_{\mathbb{B}}\left\langle \nabla u,\nabla\overline{v}\right\rangle d\mu_{g}\,\,\,\text{and}\,\,\,\left\Vert u\right\Vert _{H_{0}^{1}}^2=\int_{\mathbb{B}}\left\langle \nabla u,\nabla\overline{u}\right\rangle d\mu_{g}.
$$
In particular,
\begin{equation}
(u,v)_{H^{1}(\mathbb{B})}:=(u,v)_{\mathcal{H}^{0,0}(\mathbb{B})}+(u,v)_{H_{0}^{1}(\mathbb{B})}\label{eq:NormWzeroandW}
\end{equation}
and
\begin{equation}
\left\Vert u\right\Vert _{H^{1}(\mathbb{B})}=\sqrt{\left\Vert u\right\Vert _{\mathcal{H}^{0,0}(\mathbb{B})}^{2}+\left\Vert u\right\Vert _{H_{0}^{1}(\mathbb{B})}^{2}}.\label{eq:equivH11}
\end{equation}
Moreover, whenever $u\in L^{1}(\mathbb{B})$, we define $(u)_{\mathbb{B}}:=\fint_{\mathbb{B}}udx=|\mathbb{B}|^{-1}\int_{\mathbb{B}}ud\mu_{g}$,
where $|\mathbb{B}|=\int_{\mathbb{B}}d\mu_{g}$ is the area of $\mathbb{B}$.
\begin{lem}[Poincar\'e-Wirtinger inequality]
\label{lem:Poincar=0000E9-Wirtinger}
There is a constant $C>0$ such that 
$$
\left\Vert u-(u)_{\mathbb{B}}\right\Vert _{\mathcal{H}^{0,0}(\mathbb{B})}\le C\left\Vert u\right\Vert _{H_{0}^{1}(\mathbb{B})},\quad \forall u\in H^{1}(\mathbb{B}).
$$
\end{lem}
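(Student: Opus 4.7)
The plan is a standard compactness-contradiction argument, using the compact embedding $H^{1}(\mathbb{B})\overset{c}{\hookrightarrow}\mathcal{H}^{0,0}(\mathbb{B})$ established at the end of Proposition~\ref{prop:inclusionH1mellin}.

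Suppose the inequality fails. Then there exists $(u_{n})\subset H^{1}(\mathbb{B})$ with $\|u_{n}-(u_{n})_{\mathbb{B}}\|_{\mathcal{H}^{0,0}(\mathbb{B})} \geq n\,\|u_{n}\|_{H_{0}^{1}(\mathbb{B})}$. Setting
$$
v_{n} := \frac{u_{n}-(u_{n})_{\mathbb{B}}}{\|u_{n}-(u_{n})_{\mathbb{B}}\|_{\mathcal{H}^{0,0}(\mathbb{B})}},
$$
we have $v_{n}\in H^{1}(\mathbb{B})$ (since constants belong to $H^{1}(\mathbb{B})$ by Step~3 of the proof of Proposition~\ref{prop:inclusionH1mellin}) with $(v_{n})_{\mathbb{B}}=0$, $\|v_{n}\|_{\mathcal{H}^{0,0}(\mathbb{B})}=1$, and $\|v_{n}\|_{H_{0}^{1}(\mathbb{B})}\leq 1/n \to 0$. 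In particular $(v_{n})$ is bounded in $H^{1}(\mathbb{B})$ by \eqref{eq:equivH11}.

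By reflexivity of the Hilbert space $H^{1}(\mathbb{B})$, combined with the compact embedding, a subsequence satisfies $v_{n}\rightharpoonup v$ in $H^{1}(\mathbb{B})$ and $v_{n}\to v$ strongly in $\mathcal{H}^{0,0}(\mathbb{B})$. Weak lower semicontinuity of the seminorm $\|\cdot\|_{H_{0}^{1}(\mathbb{B})}$ gives $\|v\|_{H_{0}^{1}(\mathbb{B})}=0$, so $\nabla v=0$ a.e.\ on the connected smooth manifold $\mathbb{B}^{\circ}$; hence $v$ is a.e.\ constant. Since $|\mathbb{B}|<\infty$, the functional $u\mapsto(u)_{\mathbb{B}}$ is continuous on $\mathcal{H}^{0,0}(\mathbb{B})=L^{2}(\mathbb{B})$ (it is $|\mathbb{B}|^{-1}$ times integration against $1\in L^{2}(\mathbb{B})$), so $(v)_{\mathbb{B}}=\lim(v_{n})_{\mathbb{B}}=0$, forcing $v\equiv 0$. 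This contradicts $\|v\|_{\mathcal{H}^{0,0}(\mathbb{B})}=\lim\|v_{n}\|_{\mathcal{H}^{0,0}(\mathbb{B})}=1$.

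The main effort has already been invested in Proposition~\ref{prop:inclusionH1mellin}, which supplies both the fact that constants lie in $H^{1}(\mathbb{B})$ and the compact embedding into $\mathcal{H}^{0,0}(\mathbb{B})$; the rest is routine. The only subtle bookkeeping is confirming that $v_{n}\in H^{1}(\mathbb{B})$ after subtracting the constant mean and that this constant lies in the kernel of the $H_{0}^{1}$-seminorm, both of which follow from Step~3 of the preceding proof.
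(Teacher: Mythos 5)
Your proof is correct and is essentially the paper's own argument: the paper simply invokes the compactness-contradiction proof of \cite[Theorem 1 of Section 5.8]{Evans}, driven by the compact embedding $H^{1}(\mathbb{B})\overset{c}{\hookrightarrow}\mathcal{H}^{0,0}(\mathbb{B})$ from Proposition \ref{prop:inclusionH1mellin}, which is exactly what you spell out (note only that the failure of the inequality gives the \emph{strict} bound $\left\Vert u_{n}-(u_{n})_{\mathbb{B}}\right\Vert _{\mathcal{H}^{0,0}(\mathbb{B})}>n\left\Vert u_{n}\right\Vert _{H_{0}^{1}(\mathbb{B})}$, which is what guarantees the denominator in your normalization is nonzero).
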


\begin{proof}
The proof follows the same argument as in the proof of \cite[Theorem 1 of Section 5.8]{Evans}, using the fact that $H^{1}(\mathbb{B})$
is compactly embedded in $\mathcal{H}^{0,0}(\mathbb{B})$.
\end{proof}
\begin{defn}
Denote by $H_{0}^{1}(\mathbb{B})$ the space of all $u\in H^{1}(\mathbb{B})$
such that $(u)_{\mathbb{B}}=0$.
\end{defn}

It is clear that $H^{1}(\mathbb{B})=H_{0}^{1}(\mathbb{B})\oplus\mathbb{C}$,
where $\mathbb{C}$ is identified with the set of constant functions.
Moreover, applying Lemma \ref{lem:Poincar=0000E9-Wirtinger} with $(u)_\mathbb{B}=0$, we see that the map
$H_{0}^{1}(\mathbb{B})\ni u\mapsto\left\Vert u\right\Vert _{H_{0}^{1}(\mathbb{B})}\in\mathbb{R}$
is equivalent to the $H^{1}(\mathbb{B})$ norm.
\begin{defn}
\label{def:H-10} We denote by $H^{-1}(\mathbb{B})$ the dual space of $H^{1}(\mathbb{B})$
and by $H_{0}^{-1}(\mathbb{B})\subset H^{-1}(\mathbb{B})$ its subspace
defined by
$$
H_{0}^{-1}(\mathbb{B}):=\{ u\in H^{-1}(\mathbb{B}):\,\left\langle u,1\right\rangle _{H^{-1}(\mathbb{B})\times H^{1}(\mathbb{B})}=0\} .
$$
\end{defn}

Using the fact that $H^{1}(\mathbb{B})=H_{0}^{1}(\mathbb{B})\oplus\mathbb{C}$,
we can see that the map $H_{0}^{-1}(\mathbb{B})\ni u\mapsto\left.u\right|_{H_{0}^{1}(\mathbb{B})}\in\mathcal{L}(H_{0}^{1}(\mathbb{B}),\mathbb{C})$
is bijective, that is, $H_{0}^{-1}(\mathbb{B})$ can be identified
with the dual of $H_{0}^{1}(\mathbb{B})$.

\begin{prop}
\label{prop:HzerobetadualH1}Let $u\in\mathcal{H}^{0,\beta}(\mathbb{B})$,
for some $\beta>-1$. Then $T_{u}:H^{1}(\mathbb{B})\to\mathbb{C}$ and $\widetilde{T}_{u}:H^{1}(\mathbb{B})\to\mathbb{C}$
defined by 
$$
\begin{aligned}T_{u}(v) & =\int_{\mathbb{B}}uvd\mu_{g},\\
\widetilde{T}_{u}(v) & =\int_{\mathbb{B}}(u-(u)_{\mathbb{B}})vd\mu_{g}
\end{aligned}
$$
are continuous. Moreover the functional $\widetilde{T}_{u}$ belongs
to $H_{0}^{-1}(\mathbb{B})$ and $\widetilde{T}_{u}|_{H_{0}^{1}(\mathbb{B})}=\left.T_{u}\right|_{H_{0}^{1}(\mathbb{B})}$.
\end{prop}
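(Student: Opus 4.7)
The whole statement reduces to one quantitative estimate: pairing an $\mathcal{H}^{0,\beta}$-function against an $H^1(\mathbb{B})$-function is controlled by the product of norms. The plan is to realize this pairing as the $\mathcal{H}^{0,0}$-duality $\mathcal{H}^{0,\beta}\times\mathcal{H}^{0,-\beta}\to\mathbb{C}$ (Remark \ref{rem:xmellinL2weight}) and then to push $v\in H^{1}(\mathbb{B})$ into $\mathcal{H}^{0,-\beta}(\mathbb{B})$ using Proposition \ref{prop:inclusionH1mellin}.

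\textbf{Step 1: Continuity of $T_u$.} Write, in the sense of Remark \ref{rem:xmellinL2weight},
$$
T_{u}(v)=\int_{\mathbb{B}}u\,v\,d\mu_{g}=\int_{\mathbb{B}}(\mathsf{x}^{-\beta}u)(\mathsf{x}^{\beta}v)\,d\mu_{g},
$$
and apply Cauchy--Schwarz in $L^{2}(\mathbb{B})=\mathcal{H}^{0,0}(\mathbb{B})$ to get $|T_u(v)|\le \|u\|_{\mathcal{H}^{0,\beta}(\mathbb{B})}\|v\|_{\mathcal{H}^{0,-\beta}(\mathbb{B})}$. Since $\beta>-1$, the exponent $-\beta$ is strictly less than $1$, so Proposition \ref{prop:inclusionH1mellin} applied with that exponent yields $H^{1}(\mathbb{B})\hookrightarrow \mathcal{H}^{1,-\beta}(\mathbb{B})\hookrightarrow \mathcal{H}^{0,-\beta}(\mathbb{B})$. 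Therefore $|T_u(v)|\le C\|u\|_{\mathcal{H}^{0,\beta}(\mathbb{B})}\|v\|_{H^{1}(\mathbb{B})}$, so $T_u\in H^{-1}(\mathbb{B})$.

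\textbf{Step 2: The average $(u)_{\mathbb{B}}$ is well-defined, and continuity of $\widetilde{T}_u$.} The constant function $1$ belongs to $H^{1}(\mathbb{B})$ by Step 3 of the proof of Proposition \ref{prop:inclusionH1mellin}, so $T_u(1)=\int_{\mathbb{B}}u\,d\mu_{g}$ is finite by Step 1 and $(u)_{\mathbb{B}}=T_u(1)/|\mathbb{B}|$ is a well-defined complex number. Then for any $v\in H^{1}(\mathbb{B})$,
$$
\widetilde{T}_{u}(v)=T_{u}(v)-(u)_{\mathbb{B}}\int_{\mathbb{B}}v\,d\mu_{g},
$$
and since $|\int_{\mathbb{B}}v\,d\mu_{g}|\le |\mathbb{B}|^{1/2}\|v\|_{\mathcal{H}^{0,0}(\mathbb{B})}\le |\mathbb{B}|^{1/2}\|v\|_{H^{1}(\mathbb{B})}$, $\widetilde{T}_u$ is also continuous on $H^{1}(\mathbb{B})$.

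\textbf{Step 3: Membership in $H_{0}^{-1}(\mathbb{B})$ and the last identity.} Evaluating at $v=1$ gives $\widetilde{T}_{u}(1)=T_u(1)-(u)_{\mathbb{B}}|\mathbb{B}|=0$, so $\widetilde{T}_{u}\in H_{0}^{-1}(\mathbb{B})$. If $v\in H_{0}^{1}(\mathbb{B})$, then $\int_{\mathbb{B}}v\,d\mu_{g}=|\mathbb{B}|(v)_{\mathbb{B}}=0$, hence $\widetilde{T}_u(v)=T_u(v)$, which is the final assertion. The only non-routine ingredient is the continuous embedding $H^{1}(\mathbb{B})\hookrightarrow \mathcal{H}^{0,-\beta}(\mathbb{B})$ with sharp endpoint $-\beta<1$; everything else is bookkeeping with the duality $\mathcal{H}^{0,\beta}$--$\mathcal{H}^{0,-\beta}$.
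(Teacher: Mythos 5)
Your proof is correct and follows essentially the same route as the paper's: both realize the pairing through the $\mathcal{H}^{0,\beta}(\mathbb{B})$--$\mathcal{H}^{0,-\beta}(\mathbb{B})$ duality (your weighted Cauchy--Schwarz is exactly that pairing) combined with the embedding $H^{1}(\mathbb{B})\hookrightarrow\mathcal{H}^{0,-\beta}(\mathbb{B})$ from Proposition \ref{prop:inclusionH1mellin}. The only differences are cosmetic and, if anything, slightly tidier on your side: you justify that $(u)_{\mathbb{B}}$ is finite via $1\in H^{1}(\mathbb{B})$ rather than the paper's embedding $\mathcal{H}^{0,\beta}(\mathbb{B})\hookrightarrow L^{1}(\mathbb{B})$, you avoid the paper's reduction to $-1<\beta\le 0$, and you explicitly check $\widetilde{T}_{u}(1)=0$, which the paper leaves implicit.
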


\begin{proof}
Since $\beta>-1$, we have the inclusion $\mathcal{H}^{0,\beta}(\mathbb{B})\hookrightarrow L^{1}(\mathbb{B})$.
In fact,
$$
\begin{aligned}\int_{\mathbb{B}}\left|u\right|d\mu_{g} & =\int_{\mathbb{B}}\mathsf{x}^{-\beta}\left|u\right|\mathsf{x}^{\beta}d\mu_{g}\le\left(\int_{\mathbb{B}}\mathsf{x}^{2\beta}d\mu_{g}\right)^{1/2}\left\Vert u\right\Vert _{\mathcal{H}^{0,\beta}(\mathbb{B})},\end{aligned}
$$
 due to Remark \ref{rem:xmellinL2weight}. Note that $\int_{\mathbb{B}}\mathsf{x}^{2\beta}d\mu_{g}$
is finite, as $\int_{0}^{1}x^{n+2\beta}dx<\infty$. The fact that
$\mathcal{H}^{0,\beta}(\mathbb{B})\hookrightarrow L^{1}(\mathbb{B})$ ensures
that $(u)_{\mathbb{B}}$ is well defined.

In order to prove that $T_{u}$ is continuous, let us assume, without
loss of generality, that $-1<\beta\le0$. We denote by $\mathcal{I}_{-\beta}:H^{1}(\mathbb{B})\to\mathcal{H}^{0,-\beta}(\mathbb{B})$
the continuous inclusion from Proposition \ref{prop:inclusionH1mellin}.
Then, we have that 
$$
T_{u}(v)=\int_{\mathbb{B}}uvd\mu_{g}=\left\langle u,\mathcal{I}_{-\beta}(v)\right\rangle _{\mathcal{H}^{0,\beta}(\mathbb{B})\times\mathcal{H}^{0,-\beta}(\mathbb{B})}.
$$

Therefore $T_{u}$ is continuous as it is the composition of continuous functions.
The continuity of $\widetilde{T}_{u}$ follows similarly. The fact
that $\widetilde{T}_{u}|_{H_{0}^{1}(\mathbb{B})}=\left.T_{u}\right|_{H_{0}^{1}(\mathbb{B})}$
follows from the fact that the integral of $v$ is equal to zero if $v\in H^{1}_{0}(\mathbb{B})$.
\end{proof}
A version of Gauss theorem can also be proved for $H^{1}(\mathbb{B})$. It simplifies and improves \cite[Lemma 4.3]{LopesRoidos}.
\begin{thm}
[Gauss theorem] \label{thm:Gauss-Theorem} Let $u$ and $v$ belong
to $H^{1}(\mathbb{B})$ and $\Delta v\in\mathcal{H}^{0,\gamma}(\mathbb{B})$,
for some $\gamma>-1$. Then 
$$
\int_{\mathbb{B}}\left\langle \nabla u,\nabla v\right\rangle d\mu_{g}=-\int_{\mathbb{B}}u\Delta vd\mu_{g}.
$$
In particular, if
$u\in H^{1}(\mathbb{B})$ is such that $\Delta u\in\mathcal{H}^{0,\gamma}(\mathbb{B})$,
for some $\gamma>-1$, then $\int_{\mathbb{B}}\Delta u\,d\mu_{g}=0.$
\end{thm}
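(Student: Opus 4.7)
The plan is to prove the identity by a density argument that approximates $u$ (but not $v$) by functions in $C_c^\infty(\mathbb{B}^\circ)$, reducing to classical integration by parts in the smooth interior and then passing to the limit. First, I would observe that both sides are well-defined: the left-hand side is well-defined by the definition of the $H^1(\mathbb{B})$ inner product, while the right-hand side is well-defined because Proposition \ref{prop:HzerobetadualH1} applied to $\Delta v\in\mathcal{H}^{0,\gamma}(\mathbb{B})$ with $\gamma>-1$ produces a continuous linear functional $T_{\Delta v}(w)=\int_{\mathbb{B}}(\Delta v)w\,d\mu_{g}$ on $H^{1}(\mathbb{B})$, so $\int_{\mathbb{B}}u\,\Delta v\,d\mu_{g}$ makes sense as $T_{\Delta v}(u)$.

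Next, since $H^{1}(\mathbb{B})$ is by definition the completion of $C_{c}^{\infty}(\mathbb{B}^{\circ})$, I would pick $(u_{n})\subset C_{c}^{\infty}(\mathbb{B}^{\circ})$ with $u_{n}\to u$ in $H^{1}(\mathbb{B})$. For each $n$, the support of $u_{n}$ is a compact subset of the smooth part $\mathbb{B}^{\circ}$, where $g$ is smooth and non-degenerate. Since $v\in H^{1}_{\mathrm{loc}}(\mathbb{B}^{\circ})$ by the characterization in Remark 2.4 and $\Delta v\in\mathcal{H}^{0,\gamma}(\mathbb{B})\subset L^{2}_{\mathrm{loc}}(\mathbb{B}^{\circ})$, the classical integration-by-parts formula on the support of $u_{n}$ (with no boundary contribution, as $u_{n}$ vanishes outside a compact set) yields
\[
\int_{\mathbb{B}}\langle\nabla u_{n},\nabla v\rangle\,d\mu_{g}=-\int_{\mathbb{B}}u_{n}\,\Delta v\,d\mu_{g}.
\]

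Finally, I would pass to the limit on both sides. For the left-hand side, Cauchy--Schwarz gives
\[
\Bigl|\int_{\mathbb{B}}\langle\nabla(u-u_{n}),\nabla v\rangle\,d\mu_{g}\Bigr|\le\|u-u_{n}\|_{H^{1}_{0}(\mathbb{B})}\|v\|_{H^{1}_{0}(\mathbb{B})}\longrightarrow 0.
\]
For the right-hand side, the continuity of $T_{\Delta v}$ on $H^{1}(\mathbb{B})$ supplied by Proposition \ref{prop:HzerobetadualH1} shows $\int_{\mathbb{B}}u_{n}\Delta v\,d\mu_{g}\to\int_{\mathbb{B}}u\,\Delta v\,d\mu_{g}$. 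The identity in the theorem follows. For the particular case, apply the just-proved identity with the constant function $u:=1$ in the first slot (it belongs to $H^{1}(\mathbb{B})$ by Step 3 of Proposition \ref{prop:inclusionH1mellin}); since $\nabla 1=0$, the left-hand side vanishes, giving $\int_{\mathbb{B}}\Delta u\,d\mu_{g}=0$ after renaming $v$ to $u$.

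The argument is essentially routine: the only thing to check carefully is that no singular contribution arises in the integration by parts, and this is automatic because every $u_{n}$ is supported away from the conical locus, so the classical divergence theorem on a smooth Riemannian manifold suffices. The role played by the hypothesis $\gamma>-1$ is precisely to place $\Delta v$ in the range where Proposition \ref{prop:HzerobetadualH1} applies, which is what allows the limiting procedure to work without any further regularity on $v$.
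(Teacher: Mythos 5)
Your proof is correct and takes essentially the same route as the paper's: both are density arguments that first secure the identity when one factor is a test function supported in $\mathbb{B}^{\circ}$ and then pass to the limit in $u$ using the fact that, for $\gamma>-1$, pairing against $\Delta v\in\mathcal{H}^{0,\gamma}(\mathbb{B})$ is continuous on $H^{1}(\mathbb{B})$ (your use of Proposition~\ref{prop:HzerobetadualH1}, which is exactly the embedding $H^{1}(\mathbb{B})\hookrightarrow\mathcal{H}^{0,-\gamma}(\mathbb{B})$ plus Mellin--Sobolev duality that the paper invokes directly). The only organizational difference is that the paper additionally approximates $v$ by test functions to justify $\int_{\mathbb{B}}\langle\nabla u_{n},\nabla v\rangle\,d\mu_{g}=-\langle\Delta v,u_{n}\rangle$ through the distributional pairing, whereas you appeal to the equivalent standard fact that this identity holds for $v\in H^{1}_{\mathrm{loc}}(\mathbb{B}^{\circ})$ with distributional Laplacian in $L^{2}_{\mathrm{loc}}(\mathbb{B}^{\circ})$, which is legitimate since the supports of the $u_{n}$ stay in the smooth interior.
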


\begin{proof}
Without loss of generality, we assume that $-1<\gamma\le0$. First
we note that for $v$ and $u$ in $C_{c}^{\infty}(\mathbb{B}^{\circ})$,
we have
\begin{equation}
\int_{\mathbb{B}}\left\langle \nabla u,\nabla v\right\rangle d\mu_{g}=-\int_{\mathbb{B}}u\Delta vd\mu_{g}=-\left\langle \Delta v,u\right\rangle _{\mathcal{D}'(\mathbb{B}^{\circ})\times C_{c}^{\infty}(\mathbb{B}^{\circ})},\label{eq:forthegreen}
\end{equation}
where $\mathcal{D}'(\mathbb{B}^{\circ})$ stands for the dual space
of $C_{c}^{\infty}(\mathbb{B}^{\circ})$.

For $v\in H^{1}(\mathbb{B})$, we can choose a sequence of functions
in $C_{c}^{\infty}(\mathbb{B}^{\circ})$ that converge to $v$ in
$H^{1}(\mathbb{B})$ and, therefore, also in $\mathcal{D}'(\mathbb{B}^{\circ})$.
Hence the equality between the first and the third term of
\eqref{eq:forthegreen} still holds for all $v\in H^{1}(\mathbb{B})$
and $u\in C_{c}^{\infty}(\mathbb{B}^{\circ})$.

Moreover if $v\in H^{1}(\mathbb{B})$ and $\Delta v\in\mathcal{H}^{0,\gamma}(\mathbb{B})\subset L_{\text{loc}}^{2}(\mathbb{B}^{\circ})\subset L_{\text{loc}}^{1}(\mathbb{B}^{\circ})$,
we have again 
\begin{equation}
\left\langle \Delta v,u\right\rangle _{\mathcal{D}'(\mathbb{B}^{\circ})\times C_{c}^{\infty}(\mathbb{B}^{\circ})}=\int_{\mathbb{B}}u\Delta vd\mu_{g},
\label{eq:forthegreen2}\end{equation}
for all $u\in C_{c}^{\infty}(\mathbb{B}^{\circ})$.

Finally, if $v\in H^{1}(\mathbb{B})$, $\Delta v\in\mathcal{H}^{0,\gamma}(\mathbb{B})$
and $u\in H^{1}(\mathbb{B})$, we take a sequence in $C_{c}^{\infty}(\mathbb{B}^{\circ})$
that converges to $u$ in $H^{1}(\mathbb{B})$. As $H^{1}(\mathbb{B})\hookrightarrow\mathcal{H}^{0,-\gamma}(\mathbb{B})$,
the sequence will also converge to $u$ in $\mathcal{H}^{0,-\gamma}(\mathbb{B})$.
Using the duality of $\mathcal{H}^{0,-\gamma}(\mathbb{B})$ and $\mathcal{H}^{0,\gamma}(\mathbb{B})$, \eqref{eq:forthegreen} and \eqref{eq:forthegreen2},
we obtain our result.
\end{proof}
\begin{prop}
\label{prop:deltaisomorphism}The operator $\Delta:H_{0}^{1}(\mathbb{B})\to H_{0}^{-1}(\mathbb{B})$
is well-defined, continuous and bijective.
\end{prop}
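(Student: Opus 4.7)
The plan is to define $\Delta$ on $H_0^1(\mathbb{B})$ by means of the Dirichlet bilinear form, and then to invoke the Riesz representation theorem for the Hilbert space $(H_0^1(\mathbb{B}),(\cdot,\cdot)_{H_0^1(\mathbb{B})})$. Concretely, for $u\in H_0^1(\mathbb{B})$ I would set
$$
\langle \Delta u, v\rangle_{H^{-1}(\mathbb{B})\times H^{1}(\mathbb{B})} := -\int_{\mathbb{B}}\langle \nabla u, \nabla \overline{v}\rangle\, d\mu_g, \qquad v\in H^{1}(\mathbb{B}).
$$
Cauchy--Schwarz yields continuity with $\|\Delta u\|_{H^{-1}(\mathbb{B})}\le\|u\|_{H_0^1(\mathbb{B})}\le\|u\|_{H^{1}(\mathbb{B})}$, so $\Delta$ is bounded from $H_0^1(\mathbb{B})$ to $H^{-1}(\mathbb{B})$. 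Since $\nabla 1=0$, the functional $\Delta u$ vanishes on the constant function $1$, and hence $\Delta u\in H_0^{-1}(\mathbb{B})$. Consistency of this definition with the distributional Laplacian in the regime where the latter happens to lie in $\mathcal{H}^{0,\gamma}(\mathbb{B})$ for some $\gamma>-1$ is guaranteed by Theorem \ref{thm:Gauss-Theorem}, so there is no conflict of notation.

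For injectivity, I would take $u\in H_0^1(\mathbb{B})$ with $\Delta u=0$ and test against $v=u$: this yields $\|u\|_{H_0^1(\mathbb{B})}^2=0$, and then Lemma \ref{lem:Poincar=0000E9-Wirtinger}, combined with the defining property $(u)_\mathbb{B}=0$ of $H_0^1(\mathbb{B})$, forces $u=0$ in $\mathcal{H}^{0,0}(\mathbb{B})$.

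Surjectivity is where the Hilbert-space setup has to do the work. As noted right after Definition \ref{def:H-10}, the space $H_0^{-1}(\mathbb{B})$ is canonically identified with the topological dual of $H_0^1(\mathbb{B})$ via restriction. By Lemma \ref{lem:Poincar=0000E9-Wirtinger}, the form $(\cdot,\cdot)_{H_0^1(\mathbb{B})}$ is an inner product on $H_0^1(\mathbb{B})$ whose induced norm is equivalent to the one inherited from $H^{1}(\mathbb{B})$, so $H_0^1(\mathbb{B})$ is a Hilbert space with this inner product. Given $f\in H_0^{-1}(\mathbb{B})$, the Riesz theorem then produces a unique $u\in H_0^1(\mathbb{B})$ with $(u,v)_{H_0^1(\mathbb{B})}=-\langle f,v\rangle$ for every $v\in H_0^1(\mathbb{B})$. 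Since both sides annihilate constants, the identity extends to all $v\in H^{1}(\mathbb{B})$, giving $\Delta u=f$ in $H_0^{-1}(\mathbb{B})$.

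The only mild obstacle is verifying the Hilbert-space setup: one needs $H_0^1(\mathbb{B})$ to be a closed subspace of $H^{1}(\mathbb{B})$ on which $\|\cdot\|_{H_0^1(\mathbb{B})}$ is an equivalent norm. Both facts follow from Lemma \ref{lem:Poincar=0000E9-Wirtinger} together with continuity of the mean-value functional $u\mapsto (u)_\mathbb{B}$ on $H^1(\mathbb{B})$, which is in turn a consequence of Proposition \ref{prop:HzerobetadualH1} applied with $u\equiv 1$. Hence no genuinely hard estimate is required beyond results already established.
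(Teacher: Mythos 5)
Your proposal is correct and follows essentially the same route as the paper: both arguments apply the Riesz representation theorem to the Hilbert space $H_{0}^{1}(\mathbb{B})$ equipped with the Dirichlet inner product $(\cdot,\cdot)_{H_{0}^{1}(\mathbb{B})}$ (made legitimate by the Poincar\'e--Wirtinger lemma) and identify $\Delta$ with $-\mathcal{R}$, using the identification of $H_{0}^{-1}(\mathbb{B})$ with the dual of $H_{0}^{1}(\mathbb{B})$. Your write-up merely makes explicit the verification steps (consistency with the distributional Laplacian, closedness of $H_{0}^{1}(\mathbb{B})$, norm equivalence) that the paper compresses into ``$\Delta$ can be easily identified with $-\mathcal{R}$.''
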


Note that $\Delta u\in\mathcal{D}'(\mathbb{B}^{\circ})$ is always
well defined. As $C_{c}^{\infty}(\mathbb{B}^{\circ})$ is dense in
$H^{1}(\mathbb{B})$, $H_{0}^{-1}(\mathbb{B})$ can be easily identified,
by restricting to $C_{c}^{\infty}(\mathbb{B}^{\circ})$, with a subset
of $\mathcal{D}'(\mathbb{B}^{\circ})$.
\begin{proof}
By Riesz theorem, there is a bijective map $\mathcal{R}:H_{0}^{1}(\mathbb{B})\to H_{0}^{-1}(\mathbb{B})$
such that
$$
\left\langle \mathcal{R}u,\overline{v}\right\rangle _{H_{0}^{-1}(\mathbb{B})\times H_{0}^{1}(\mathbb{B})}=(u,v)_{H_{0}^{1}(\mathbb{B})}=\int_{\mathbb{B}}\left\langle \nabla u,\nabla\overline{v}\right\rangle d\mu_{g}.
$$
Then, the operator $\Delta$ can be easily identified with $-\mathcal{R}$.
\end{proof}
\begin{cor}
\label{lem:EquivalenceofnormDeltauandu-1} There is a constant $C>0$
such that
$$
\left\Vert u\right\Vert _{H_{0}^{-1}(\mathbb{B})}\le C\left\Vert \Delta u\right\Vert _{H_{0}^{-1}(\mathbb{B})},\quad \forall u\in H_{0}^{1}(\mathbb{B}).
$$
\end{cor}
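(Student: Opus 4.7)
My plan is to view the stated inequality as saying that the continuous linear isomorphism $\Delta$ from Proposition \ref{prop:deltaisomorphism} has a bounded inverse, combined with the fact that $H_0^1(\mathbb{B})$ sits continuously inside $H_0^{-1}(\mathbb{B})$. So the argument has three short steps and the proof reduces to routine functional analysis.

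First, I would invoke the open mapping theorem (Banach's inverse theorem) applied to Proposition \ref{prop:deltaisomorphism}: since $H_0^1(\mathbb{B})$ and $H_0^{-1}(\mathbb{B})$ are Banach spaces (the latter is a closed subspace of the Banach space $H^{-1}(\mathbb{B})$) and $\Delta: H_0^1(\mathbb{B}) \to H_0^{-1}(\mathbb{B})$ is a continuous bijection, its inverse is continuous. This gives a constant $C_1 > 0$ with $\|u\|_{H_0^1(\mathbb{B})} \le C_1 \|\Delta u\|_{H_0^{-1}(\mathbb{B})}$ for all $u \in H_0^1(\mathbb{B})$.

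Second, I would establish a continuous embedding $H_0^1(\mathbb{B}) \hookrightarrow H_0^{-1}(\mathbb{B})$. By Proposition \ref{prop:inclusionH1mellin}, $H^1(\mathbb{B}) \hookrightarrow \mathcal{H}^{0,0}(\mathbb{B})$; and by Proposition \ref{prop:HzerobetadualH1} with $\beta = 0 > -1$, every $u \in \mathcal{H}^{0,0}(\mathbb{B})$ induces a continuous functional $T_u \in H^{-1}(\mathbb{B})$, with $\|T_u\|_{H^{-1}(\mathbb{B})} \le C \|u\|_{\mathcal{H}^{0,0}(\mathbb{B})}$. If moreover $u \in H_0^1(\mathbb{B})$, then $(u)_{\mathbb{B}} = 0$, so $T_u = \widetilde{T}_u \in H_0^{-1}(\mathbb{B})$ by the same proposition. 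This yields a constant $C_2 > 0$ with $\|u\|_{H_0^{-1}(\mathbb{B})} \le C_2 \|u\|_{H_0^1(\mathbb{B})}$.

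Finally, chaining the two estimates gives $\|u\|_{H_0^{-1}(\mathbb{B})} \le C_2 \|u\|_{H_0^1(\mathbb{B})} \le C_1 C_2 \|\Delta u\|_{H_0^{-1}(\mathbb{B})}$, which is the claimed bound with $C = C_1 C_2$. There is no real obstacle here, since all the analytical content has already been packaged into Propositions \ref{prop:inclusionH1mellin}, \ref{prop:HzerobetadualH1}, and \ref{prop:deltaisomorphism}; the only mildly subtle point is checking that $T_u$ indeed lands in the subspace $H_0^{-1}(\mathbb{B})$ when $u$ has vanishing mean, which is immediate from the definition of that subspace together with $\widetilde{T}_u|_{H_0^1(\mathbb{B})} = T_u|_{H_0^1(\mathbb{B})}$ in Proposition \ref{prop:HzerobetadualH1}.
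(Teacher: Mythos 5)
Your proof is correct and takes essentially the same route as the paper's: the paper's one-line proof cites exactly the continuous inclusion $H_{0}^{1}(\mathbb{B})\hookrightarrow H_{0}^{-1}(\mathbb{B})$ (via Propositions \ref{prop:inclusionH1mellin} and \ref{prop:HzerobetadualH1}) together with Proposition \ref{prop:deltaisomorphism}, which is precisely the chain of estimates you spell out. The only cosmetic difference is that you obtain the bounded inverse of $\Delta$ from the open mapping theorem, whereas in the paper this is implicit in the identification $\Delta=-\mathcal{R}$ with the Riesz isomorphism used to prove Proposition \ref{prop:deltaisomorphism}.
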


\begin{proof}
This follows from the continuous inclusion $H_{0}^{1}(\mathbb{B})\hookrightarrow H_{0}^{-1}(\mathbb{B})$, provided by Propositions 
\ref{prop:inclusionH1mellin} and \ref{prop:HzerobetadualH1},
and Proposition \ref{prop:deltaisomorphism}.
\end{proof}

\subsection{Sobolev immersions}

In this section, we prove some embeddings concerning Mellin-Sobolev
spaces.
\begin{prop}
Suppose that $p\in[2,\infty)$, if $n=1$, and $p\in[2,(2n+2)/(n-1)]$,
if $n\ge2$. Then, for each $\gamma\in\mathbb{R}$, we have $\mathcal{H}^{1,\gamma}(\mathbb{B})\hookrightarrow\mathsf{x}^{\gamma-(n+1)(1/2-1/p)}L^{p}(\mathbb{B})$.
In particular, if $\gamma\ge(n+1)(1/2-1/p)$, then
$\mathcal{H}^{1,\gamma}(\mathbb{B})\hookrightarrow L^{p}(\mathbb{B})$.
\end{prop}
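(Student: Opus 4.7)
The plan is to prove this by decomposing $u=\omega u+(1-\omega)u$ and handling the two pieces in opposite ways. The part $(1-\omega)u$ lives away from the singular set, where $\mathsf{x}$ is bounded above and below by positive constants, so the weight in the target space is harmless and the standard Sobolev embedding $H^{1}(2\mathbb{B})\hookrightarrow L^{p}(2\mathbb{B})$ on the smooth closed manifold $2\mathbb{B}$ delivers the bound for this piece in terms of the second summand in \eqref{mellinsobolev} for $s=1$.

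For the singular piece $\omega u$, I would work in the collar, insert the partition of unity $\{\phi_{j}\}$ on $\partial\mathcal{B}$ and push forward via $(1\otimes\kappa_{j})_{\ast}$, so that the problem is reduced to a statement on $\mathbb{R}_{+}\times\mathbb{R}^{n}$. The key observation is that the change of variables $x=e^{-t}$ converts the weighted $L^{p}$-integral with the conic measure into the flat $L^{p}(\mathbb{R}^{n+1})$-integral of $M_{\gamma}(\omega\phi_{j}u)$: the exponent $\gamma-(n+1)(1/2-1/p)$ in the target space is chosen precisely so that the factor $e^{(\gamma-(n+1)/2)t}$ coming from $M_{\gamma}$, together with $dx/x=-dt$ and the Jacobian $x^{n}$ from $d\mu_{g}$, combine to give a clean identification
$$
\bigl\|\mathsf{x}^{-\gamma+(n+1)(1/2-1/p)}\omega\phi_{j}u\bigr\|_{L^{p}(\mathbb{B})}=\bigl\|M_{\gamma}(1\otimes\kappa_{j})_{\ast}(\omega\phi_{j}u)\bigr\|_{L^{p}(\mathbb{R}^{n+1})}.
$$
This reduces the claim to the flat Sobolev embedding $H^{1}(\mathbb{R}^{n+1})\hookrightarrow L^{p}(\mathbb{R}^{n+1})$, which is valid exactly for the range of $p$ stated: all $p\in[2,\infty)$ when $n=1$ (so $\dim\mathbb{R}^{n+1}=2$) and $p\in[2,2(n+1)/(n-1)]$ when $n\geq 2$, matching the Sobolev critical exponent $(2(n+1))/((n+1)-2)$.

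Summing over the charts then produces $\|\omega u\|_{\mathsf{x}^{\gamma-(n+1)(1/2-1/p)}L^{p}(\mathbb{B})}\le C\|u\|_{\mathcal{H}^{1,\gamma}(\mathbb{B})}$ from the first summand of \eqref{mellinsobolev}, and adding the estimate for $(1-\omega)u$ yields the desired embedding. The final sentence about $\gamma\geq(n+1)(1/2-1/p)$ is then immediate from the fact that $\mathsf{x}$ is bounded on $\mathbb{B}$, so positive powers of $\mathsf{x}$ are bounded and the weighted $L^{p}$ space embeds into $L^{p}(\mathbb{B})$.

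The only genuinely non-routine step is the verification that the exponents line up in the change of variables; once that is checked, the rest is standard. There is no real analytic obstacle since all the machinery (Mellin coordinates, partition of unity, flat Sobolev embedding) is available, and the restriction on $p$ arises purely from the flat case, with the dimension $n+1$ entering through $\mathbb{R}^{n+1}$ after the transformation.
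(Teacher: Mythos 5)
Your proposal is correct and follows essentially the same route as the paper: localize to the collar with the cut-off and partition of unity, observe that the Mellin substitution $x=e^{-t}$ turns the weighted $L^{p}$-norm with exponent $\gamma-(n+1)(1/2-1/p)$ into the flat $L^{p}(\mathbb{R}^{n+1})$-norm of $M_{\gamma}(1\otimes\kappa_{j})_{\ast}(\omega\phi_{j}u)$, and then invoke $H^{1}(\mathbb{R}^{n+1})\hookrightarrow L^{p}(\mathbb{R}^{n+1})$, with the interior piece handled by the standard embedding away from the tip. The exponent bookkeeping you flag as the only non-routine step checks out, so nothing further is needed.
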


\begin{proof}
First, we note that
$$
\mathcal{H}^{1,\gamma}(\mathbb{B})\subset H_{\text{loc}}^{1}(\mathbb{B}^{\circ})\subset L_{\text{loc}}^{p}(\mathbb{B}^{\circ}).
$$
Therefore, it is enough to understand the behavior of elements of
$\mathcal{H}^{1,\gamma}(\mathbb{B})$ in the neighborhood of the conical
tip. We have $H^{1}(\mathbb{R}^{n+1})\hookrightarrow L^{p}(\mathbb{R}^{n+1})$.
Hence
\begin{eqnarray}\nonumber
\lefteqn{ \left\Vert (x,y)\mapsto e^{(\gamma-\frac{n+1}{2})x}\omega(e^{-x})\phi_{j}(y)u(e^{-x},y)\right\Vert _{L^{p}(\mathbb{R}^{n+1})}}\\\label{eq:Im1}
 && \le C\left\Vert (x,y)\mapsto e^{(\gamma-\frac{n+1}{2})x}\omega(e^{-x})\phi_{j}(y)u(e^{-x},y)\right\Vert _{H^{1}(\mathbb{R}^{n+1})}\le C\left\Vert u\right\Vert _{\mathcal{H}^{1,\gamma}(\mathbb{B})},
\end{eqnarray}
where we recall that $\{\phi_{j}\}_{j\in J}$ is a partition of unity
of $\partial\mathcal{B}$ and $\omega$ is as Definition \ref{def:mellinsobolevspaces}. A change of variables $e^{-x}\mapsto x$ in \eqref{eq:Im1} implies
$$
\left\Vert \mathsf{x}^{(n+1)(\frac{1}{2}-\frac{1}{p})-\gamma}u\right\Vert _{L^{p}(\mathbb{B})}\le C\left\Vert u\right\Vert _{\mathcal{H}^{1,\gamma}(\mathbb{B})},
$$
which shows the claim.
\end{proof}
\textcolor{green}{}%

\begin{cor}
\label{cor:Mellinsobolevembedding}The following continuous inclusions
hold:\\
\emph{(i)} If $\dim(\mathbb{B})\in\{2,3\}$, then $H^{1}(\mathbb{B})\hookrightarrow L^{4}(\mathbb{B})$.\\
\emph{(ii)} If $\dim(\mathbb{B})=2$, then $H^{1}(\mathbb{B})\hookrightarrow L^{6}(\mathbb{B})$.\\
\emph{(iii)} If $\dim(\mathbb{B})=3$ and $\alpha>0$, then $H^{1}(\mathbb{B})\hookrightarrow\mathsf{x}^{-\alpha}L^{6}(\mathbb{B})$.
\end{cor}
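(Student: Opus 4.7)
The plan is simply to chain the two embedding results already established immediately before the corollary. By Proposition \ref{prop:inclusionH1mellin}, one has $H^{1}(\mathbb{B})\hookrightarrow\mathcal{H}^{1,\beta}(\mathbb{B})$ for every $\beta<1$, and by the preceding proposition, $\mathcal{H}^{1,\gamma}(\mathbb{B})\hookrightarrow\mathsf{x}^{\gamma-(n+1)(1/2-1/p)}L^{p}(\mathbb{B})$ for $p$ in the admissible range, where $n+1=\dim(\mathbb{B})$. So for each part it is enough to check that the targeted $p$ is admissible and then pick $\beta<1$ large enough to kill (or control) the weight in $\mathsf{x}$.

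For (i), take $p=4$. The threshold $(n+1)(1/2-1/p)$ equals $1/2$ when $n=1$ and $3/4$ when $n=2$, both strictly less than $1$; any $\beta\in[(n+1)/4,1)$ then yields $H^{1}(\mathbb{B})\hookrightarrow\mathcal{H}^{1,\beta}(\mathbb{B})\hookrightarrow L^{4}(\mathbb{B})$. For (ii), $n=1$ and $p=6$: the threshold is $2/3<1$, and the same two-step chain with any $\beta\in[2/3,1)$ gives $H^{1}(\mathbb{B})\hookrightarrow L^{6}(\mathbb{B})$.

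Part (iii) is the only case with any subtlety: for $n=2$ and $p=6$ the threshold is exactly $1$, precisely the forbidden endpoint of Proposition \ref{prop:inclusionH1mellin}. This is what forces the $\mathsf{x}^{-\alpha}$ correction. Given $\alpha>0$, pick $\beta=\max\{0,1-\alpha\}$ (or any fixed $\beta\in(0,1)$ if $\alpha\ge 1$); then the weight exponent in the preceding proposition is $\beta-1\le -\alpha$, so $H^{1}(\mathbb{B})\hookrightarrow\mathcal{H}^{1,\beta}(\mathbb{B})\hookrightarrow\mathsf{x}^{\beta-1}L^{6}(\mathbb{B})\hookrightarrow\mathsf{x}^{-\alpha}L^{6}(\mathbb{B})$. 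The only ``obstacle'' is thus purely bookkeeping: the unavoidable loss $\alpha>0$ in (iii) is exactly the trace of the strict inequality $\beta<1$ in Proposition \ref{prop:inclusionH1mellin}, reflecting the fact that the critical exponent $p=6$ in $\dim(\mathbb{B})=3$ sits right at the boundary of the weighted scale reachable from $H^{1}(\mathbb{B})$.
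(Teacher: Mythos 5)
Your proof is correct and follows essentially the same argument as the paper: chain $H^{1}(\mathbb{B})\hookrightarrow\mathcal{H}^{1,\beta}(\mathbb{B})$, $\beta<1$, with the weighted Sobolev embedding of the preceding proposition, taking $\beta$ at or above the threshold $(n+1)(1/2-1/p)$ in (i)--(ii) and $\beta\ge 1-\alpha$ in (iii). One small slip in (iii): since $\mathsf{x}\le 1$, larger weight exponents give \emph{smaller} spaces, so the inclusion $\mathsf{x}^{\beta-1}L^{6}(\mathbb{B})\hookrightarrow\mathsf{x}^{-\alpha}L^{6}(\mathbb{B})$ requires $\beta-1\ge-\alpha$, not $\beta-1\le-\alpha$ as you wrote; your actual choices ($\beta=1-\alpha$ when $\alpha<1$, giving equality, and any fixed $\beta\in[0,1)$ when $\alpha\ge1$) satisfy the correct inequality, so the conclusion stands.
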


\begin{proof}
We have seen that $H^{1}(\mathbb{B})\hookrightarrow\mathcal{H}^{1,\beta}(\mathbb{B})$,
for all $\beta<1$. For $\dim(\mathbb{B})=2$, if we choose $1/2\le\beta<1$,
we have $\mathcal{H}^{1,\beta}(\mathbb{B})\hookrightarrow L^{4}(\mathbb{B})$.
For $\dim(\mathbb{B})=3$, if we choose $3/4\le\beta<1$, we have
$\mathcal{H}^{1,\beta}(\mathbb{B})\hookrightarrow L^{4}(\mathbb{B})$.
For $\dim(\mathbb{B})=2$, if we choose if $2/3\le\beta<1$, then
$\mathcal{H}^{1,\beta}(\mathbb{B})\hookrightarrow L^{6}(\mathbb{B})$.
Finally, for $\dim(\mathbb{B})=3$ and $\alpha>0$, then choose $1-\alpha\le\beta<1$
so that $\mathcal{H}^{1,\beta}(\mathbb{B})\hookrightarrow\mathsf{x}^{-\alpha}L^{6}(\mathbb{B})$.
\end{proof}
\textcolor{green}{}%

\section{Realizations of the Laplacian and bi-Laplacian\label{sec:Realizations}}

We start with some basic concepts on analytic semigroup theory.
\begin{defn}
Let $X$ be a complex Banach space and $A:\mathcal{D}(A)\to X$ be
a densely defined closed operator in $X$. We say that $A$ is a
\emph{negative generator of an analytic semigroup} if for some $\delta,C>0$
we have
$$
\{\lambda\in\mathbb{C}:\,\text{Re}(\lambda)>-\delta\}\subset\rho(A)\quad\text{and}\quad\left\Vert (\lambda-A)^{-1}\right\Vert _{\mathcal{L}(X)}\le C/|\lambda|,\,\,\,\text{Re}(\lambda)>-\delta.
$$
\end{defn}

The semigroup associated to a negative generator $A$ is denoted
$e^{tA}\in\mathcal{L}(X)$, see e.g. \cite[Chapter I.1.2]{Am}. Both semigroup and the complex powers
$(-A)^{z}:\mathcal{D}((-A)^{z})\to X$, $z\in\mathbb{C}$, can be
defined by Cauchy's integral formula, see e.g. \cite[Theorem III.4.6.5]{Am}. In the case of $(-A)^{it}\in\mathcal{L}(X)$ for all $t\in\mathbb{R}$ and $\left\Vert (-A)^{it}\right\Vert \le Me^{\phi|t|}$, for some $M>0$ and $\phi\ge 0$,
we say that $-A$ has \emph{bounded imaginary powers} and denote by
$-A\in\mathcal{BIP}(\phi)$, see e.g. \cite[Chapter III.4.7]{Am}. Recall that if $-A\in\mathcal{BIP}$, then $[X,\mathcal{D}(A)]_{\theta}=\mathcal{D}((-A)^{\theta})$, see e.g. \cite[Theorem 4.17.]{Lunardi}.

Next we recall some basic facts from the {\em cone calculus}, for more details we refer to \cite{CSS1,GilMendoza,Le,GKM,SS1,SS,Schu}. The Laplacian $\Delta$, as a cone differential operator, acts naturally on scales of Mellin-Sobolev spaces. Let us consider it as an unbounded operator
in $\mathcal{H}^{s,\gamma}(\mathbb{B})$, $s,\gamma\in\mathbb{R}$,
with domain $C_{c}^{\infty}(\mathbb{B}^{\circ})$. Denote by $\Delta_{s,\min}$
the minimal extension (i.e. the closure) of $\Delta$ and by $\Delta_{s,\max}$
the maximal extension, defined as usual by
$$
\mathcal{D}(\Delta_{s,\max})=\Big\{ u\in\mathcal{H}^{s,\gamma}(\mathbb{B})\,|\,\Delta u\in\mathcal{H}^{s,\gamma}(\mathbb{B})\Big\}.
$$

An important result in the field of cone differential operators tells
us that those two domains differ in general, unlike the case of closed
manifolds. In particular, there exists an $s$-independent finite-dimensional
space $\mathcal{E}_{\max,\Delta,\gamma}$, that is called \emph{asymptotics
space}, such that
\begin{equation}\label{maxdom}
\mathcal{D}(\Delta_{s,\max})=\mathcal{D}(\Delta_{s,\min})\oplus\mathcal{E}_{\max,\Delta,\gamma}.
\end{equation}
More precisely, $\mathcal{E}_{\max,\Delta,\gamma}$ consists of linear
combinations of smooth functions in $\mathbb{B}^{\circ}$. Those functions
vanish outside the collar neighborhood and, in local coordinates $(x,y)\in[0,1)\times\partial\mathcal{B}$,
can be written as $\omega(x)c(y)x^{-\rho}\log^{k}(x)$. The function
$\omega$ is the cut-off function defined on Section \ref{sec:Function-spaces},
$c\in C^{\infty}(\partial\mathbb{B})$, $\rho\in\{z\in\mathbb{C}\,|\,\mathrm{Re}(z)\in[\frac{n-3}{2}-\gamma,\frac{n+1}{2}-\gamma)\}$
and $k\in\{0,1\}$. Here, the metric $h(\cdot)$ determines explicitly
the exponents $\rho$. As for the minimal domain, it can be proved that $\mathcal{D}(\Delta_{s,\min})=\mathcal{H}^{s+2,\gamma+2}(\mathbb{B})$,
whenever
$$
\frac{n-3}{2}-\gamma\notin\Big\{\frac{n-1}{2}\pm\sqrt{\Big(\frac{n-1}{2}\Big)^{2}-\lambda_{j}}:\,j\in\mathbb{N}\Big\}.
$$

A suitable choice of the domain of the Laplacian is given below. For
this, we denote by $\mathbb{C}_{\omega}$ the finite dimensional
space of functions that are equal to zero outside the collar neighborhood and that close to the singularities are expressed by $\sum_{j=1}^{N}c_{j}\omega_{j}$, where $N$ is the
number of connected components of $\partial\mathcal{B}$ and $\omega_{j}$
are the restrictions of $\omega$ to these components. Also denote by $\mathbb{R}_\omega$ the subspace of $\mathbb{C}_\omega$ such that $c_j\in\mathbb{R}$.
\begin{thm}
\label{thm:Laplacian} \cite[Theorem 6.7]{SS1} Let
\begin{equation}\label{gammachoice}
\frac{n-3}{2}<\gamma<\min\Big\{-1+\sqrt{\Big(\frac{n-1}{2}\Big)^{2}-\lambda_{1}},\frac{n+1}{2}\Big\},
\end{equation}
where $\lambda_{1}$ is the greatest non-zero eigenvalue of the boundary
Laplacian $\Delta_{h(0)}$ on $(\partial\mathcal{B},h(0))$. Then
for every $c,\phi>0$, the operator $\Delta-c:\mathcal{H}^{s+2,\gamma+2}(\mathbb{B})\oplus\mathbb{C}_{\omega}\to\mathcal{H}^{s,\gamma}(\mathbb{B})$
is a negative generator of an analytic semigroup such that $-\Delta+c\in\mathcal{BIP}(\phi)$. The Laplacian $\Delta:\mathcal{H}^{s+2,\gamma+2}(\mathbb{B})\oplus\mathbb{C}_{\omega}\to\mathcal{H}^{s,\gamma}(\mathbb{B})$ will be denoted by $\Delta_s$.
\end{thm}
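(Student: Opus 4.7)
The plan is to invoke the parameter-dependent cone pseudodifferential calculus of Schrohe-Schulze to build the resolvent of $\Delta-c$ with sharp bounds in a sector of opening close to $\pi$, from which both the analytic-semigroup property and $\mathcal{BIP}(\phi)$ follow by a Dunford-integral argument.

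First I would verify that $\Delta-c-\lambda$ is parameter-elliptic in the cone algebra for $\lambda$ ranging in a sector $\Lambda_\theta=\{|\arg\lambda|\le\theta\}$, with $\theta$ arbitrarily close to $\pi$. Two ingredients are needed. The interior principal symbol with parameter, $-|\xi|_g^2-\lambda$, is invertible for $(\xi,\lambda)\ne(0,0)$ whenever $\lambda\in\Lambda_\theta$. The principal conormal symbol, read off from the Fuchs form of $\Delta$ displayed in the Introduction, is
\[
\sigma_M(\Delta)(z)=z^2+(n-1)z+\Delta_{h(0)}:H^{s+2}(\partial\mathbb{B})\to H^s(\partial\mathbb{B}),
\]
which on an eigenfunction of $\Delta_{h(0)}$ with eigenvalue $\lambda_j$ reduces to scalar multiplication by $z^2+(n-1)z+\lambda_j$, with zeros $z_j^\pm=-\tfrac{n-1}{2}\pm\sqrt{(\tfrac{n-1}{2})^2-\lambda_j}$. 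Condition \eqref{gammachoice} is exactly what places the weight line $\operatorname{Re} z=\tfrac{n-3}{2}-\gamma$ strictly between the roots $z_0^-=-(n-1)$ and $z_0^+=0$ arising from $\lambda_0=0$, and strictly to the left of all $z_j^+$ with $j\ge 1$.

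Next I would carry out the parametrix construction: an interior parametrix on $\mathbb{B}^\circ$, a Mellin pseudodifferential operator with symbol $(\sigma_M(\Delta)(z)-c-\lambda)^{-1}$ along the chosen weight line, and Green operators that absorb the finite-rank contribution from the isolated zero at $z=0$, whose kernel corresponds to the subspace of constants on $\partial\mathbb{B}$ that we must adjoin as $\mathbb{C}_\omega$ to the Mellin-Sobolev domain. Assembly of these three pieces yields a parametrix $P(\lambda)$ with $(\Delta-c-\lambda)P(\lambda)=I+R(\lambda)$ and $\|R(\lambda)\|\to 0$ as $|\lambda|\to\infty$ in $\Lambda_\theta$. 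Neumann inversion then gives
\[
\|(\Delta-c-\lambda)^{-1}\|_{\mathcal{L}(\mathcal{H}^{s,\gamma}(\mathbb{B}))}\le\frac{C}{|\lambda|},\quad\lambda\in\Lambda_\theta,\ |\lambda|\ge R.
\]
Since the embedding $\mathcal{H}^{s+2,\gamma+2}(\mathbb{B})\oplus\mathbb{C}_\omega\overset{c}{\hookrightarrow}\mathcal{H}^{s,\gamma}(\mathbb{B})$ is compact, the spectrum is discrete; coercivity of $-\Delta+c$ for $c>0$ (tested against $\omega$-functions and Mellin-Sobolev functions separately via Theorem \ref{thm:Gauss-Theorem}) places it in $\{\operatorname{Re}\lambda\le-\delta\}$ for some $\delta>0$, upgrading the estimate to all of $\Lambda_\theta$.

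Finally, with the resolvent bound valid in a sector of opening $\theta$ arbitrarily close to $\pi$, I would define $(-\Delta+c)^{-z}$ by a Dunford integral on a Hankel contour $\Gamma\subset\Lambda_\theta$ and estimate $\|(-\Delta+c)^{it}\|\le M e^{(\pi-\theta)|t|}$ directly; choosing $\theta>\pi-\phi$ delivers the desired $\mathcal{BIP}(\phi)$. The main technical obstacle is keeping the $C/|\lambda|$ resolvent constant uniform as $\theta\to\pi$: the Mellin contour integrals defining the parametrix accumulate contributions close to the negative real axis that must be controlled using holomorphy of $(\sigma_M(\Delta)(z)-\lambda)^{-1}$ in a suitable strip, and the norms of the Green corrections degrade as the weight line approaches the excluded zeros of the conormal symbol. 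Once these uniform bounds are in hand, $\mathcal{BIP}(\phi)$ follows by standard contour-deformation arguments.
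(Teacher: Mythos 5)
Note first that the paper contains no proof of this statement: Theorem \ref{thm:Laplacian} is imported verbatim from \cite[Theorem 6.7]{SS1}, so your proposal has to be judged as a reconstruction of that cited result. Your general framework (parameter-dependent cone calculus, analysis of the conormal symbol, the role of \eqref{gammachoice} in keeping the weight line away from the roots attached to $\lambda_{1}$ while retaining the pole at $z=0$ whose kernel produces $\mathbb{C}_{\omega}$) is indeed the right one, but two steps contain genuine gaps. The first concerns ellipticity: for cone differential operators, invertibility of the interior principal symbol and of the conormal symbol along the weight line is \emph{not} sufficient for a parametrix with $\|R(\lambda)\|\to0$ when one works with a non-minimal closed extension, and $\mathcal{D}(\Delta_{s})=\mathcal{H}^{s+2,\gamma+2}(\mathbb{B})\oplus\mathbb{C}_{\omega}$ is precisely such an extension. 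One must additionally verify that the associated \emph{model cone operator} on the infinite cone $\mathbb{R}_{+}\times\partial\mathcal{B}$, equipped with the corresponding extension of its domain in the $\mathcal{K}^{s,\gamma}$-scale, has no spectrum in the sector $\Lambda_{\theta}$; this third symbolic level (the conditions of \cite{SS1}, \cite{SS}, \cite{GKM}) is exactly where the choice of adjoining $\mathbb{C}_{\omega}$, rather than some other subspace of $\mathcal{E}_{\max,\Delta,\gamma}$, and the upper bound $\gamma<-1+\sqrt{((n-1)/2)^{2}-\lambda_{1}}$ actually do their work. Your device of ``Green operators absorbing the finite-rank contribution from the zero at $z=0$'' does not substitute for this verification, and without it the Neumann-series inversion is unjustified.

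The second gap is fatal as stated: sectoriality with the resolvent bound $C/|\lambda|$ in sectors of opening arbitrarily close to $\pi$ does \emph{not} imply bounded imaginary powers. On the contour, $|\lambda^{it}|\le e^{(\pi-\theta)|t|}$ is bounded, so the Dunford integrand $\lambda^{it}(\lambda-\Delta+c)^{-1}$ decays only like $1/|\lambda|$, and the integral diverges logarithmically at both $0$ and $\infty$; ``standard contour-deformation arguments'' therefore cannot produce $\|(-\Delta+c)^{it}\|\le Me^{\phi|t|}$, and there are classical examples of sectorial operators of angle zero whose imaginary powers are unbounded. This is precisely why \cite{SS1} proves the stronger statement of a bounded $H_{\infty}$-calculus and does so not from the resolvent norm bound alone, but from the \emph{structure} of the resolvent as an element of the parameter-dependent cone algebra: the resolvent is split into an explicitly analyzable (Mellin-quantized) principal part plus remainders decaying strictly faster than $|\lambda|^{-1}$, and the contribution of each piece to $\int f(\lambda)(\lambda-A)^{-1}d\lambda$, $f\in H_{\infty}$, is estimated separately through symbol and kernel estimates. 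Your proposal would need an ingredient of this type to reach the $\mathcal{BIP}(\phi)$ conclusion; as written, the last step asserts an implication that is false in general.
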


Similarly to the Laplacian, the bi-Laplacian has a minimal extension $\Delta_{s,\min}^{2}$, which is the closure of $\Delta^{2}:C_{c}^{\infty}(\mathbb{B}^{\circ})\to\mathcal{H}^{s,\gamma}(\mathbb{B})$ satisfying 
$$
\mathcal{H}^{s+4,\gamma+4}(\mathbb{B})\hookrightarrow\mathcal{D}(\Delta_{s,\min}^{2})\hookrightarrow\cap_{\varepsilon>0}\mathcal{H}^{s+4,\gamma+4-\varepsilon}(\mathbb{B}),
$$ 
and a maximal extension $\Delta_{s,\max}^{2}$, whose domain is $\{u\in\mathcal{H}^{s,\gamma}(\mathbb{B}):\Delta^{2}u\in\mathcal{H}^{s,\gamma}(\mathbb{B})\}$. They are related by $\mathcal{D}(\Delta_{s,\max}^{2})=\mathcal{D}(\Delta_{s,\min}^{2})\oplus\mathcal{E}_{\max,\Delta^{2},\gamma}$,where $\mathcal{E}_{\max,\Delta^{2},\gamma}$ is a finite dimensional space consisting of linear combinations of functions of the form $\omega(x)c(y)x^{-\rho}\log^{k}(x)$, with $k\in\{0,1,2,3\}$ and $\rho\in\{z\in\mathbb{C}:\text{Re}(z)\in[\frac{n-7}{2}-\gamma,\frac{n+1}{2}-\gamma)\}$.

We will choose a bi-Laplacian domain $\mathcal{D}(\Delta_{s}^{2})$ satisfying 
$$
\mathcal{D}(\Delta_{s,\min}^{2})\hookrightarrow\mathcal{D}(\Delta_{s}^{2})\hookrightarrow\mathcal{D}(\Delta_{s,\max}^{2}).
$$ 
The definition and the properties of the operator $\Delta_{s}^{2}:\mathcal{D}(\Delta_{s}^{2})\to\mathcal{H}^{s,\gamma}(\mathbb{B})$ are explained in the following corollary of Theorem \ref{thm:Laplacian}.

\begin{cor}
[bi-Laplacian] \label{cor:(Bi-Laplacian)} Consider the bi-Laplacian
$\mathcal{D}(\Delta_{s}^{2})=\{u\in\mathcal{D}(\Delta_{s}):\Delta_{s}u\in\mathcal{D}(\Delta_{s})\}$,
where $\Delta_{s}$ is as in Theorem \ref{thm:Laplacian}. Then, there exists an $s$-independent finite dimensional space $\mathcal{E}_{\Delta^{2},\gamma}$
contained in $\ensuremath{\mathcal{H}^{\infty,\gamma+2+\alpha_{0}}(\mathbb{B})}\cap\mathcal{E}_{\max,\Delta^{2},\gamma}$,
for some $\alpha_{0}>0$, such that \eqref{bilapintro1} holds. In particular, $\mathcal{D}(\Delta_{s}^{2})\hookrightarrow\mathcal{H}^{s+4.\gamma+4-\varepsilon}\oplus\mathbb{C}_{\omega}\oplus\mathcal{E}_{\Delta^{2},\gamma}$,
for all $\varepsilon>0$. The space $\mathcal{E}_{\Delta^{2},\gamma}$
consists of $C^{\infty}(\mathbb{B}^{\circ})$-functions, which in
local coordinates $(x,y)\in[0,1)\times\partial\mathcal{B}$, are of
the form $\omega(x)c(y)x^{\rho}\ln^{k}(x)$, where $c\in C^{\infty}(\partial\mathbb{B})$,
$\rho\in\{z\in\mathbb{C} : \mathrm{Re}(z)\in[\frac{n-7}{2}-\gamma,\frac{n-3}{2}-\gamma)\}$
and $k\in\{0,1,2,3\}$; for more details we refer to \cite[Section 3.2]{LopesRoidos}.

Moreover, for every $\phi>0$, the operator $A_{s}:=-(1-\Delta_{s})^{2}:\mathcal{D}(\Delta_{s}^{2})\to\mathcal{H}^{s,\gamma}(\mathbb{B})$
is a negative generator of an analytic semigroup such that $(1-\Delta_{s})^{2}\in\mathcal{BIP}(\phi)$, see \cite[Proposition 3.6]{LopesRoidos}.
\end{cor}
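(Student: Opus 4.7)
My plan is to treat the domain decomposition \eqref{bilapintro1} and the BIP/analytic semigroup claim as largely independent sub-problems, handling the latter first as a direct consequence of Theorem \ref{thm:Laplacian}. Setting $c=1$ in Theorem \ref{thm:Laplacian} gives $1-\Delta_s\in\mathcal{BIP}(\phi)$ for every $\phi>0$, and in particular $(1-\Delta_s):\mathcal{D}(\Delta_s)\to\mathcal{H}^{s,\gamma}(\mathbb{B})$ is invertible. The natural domain of its square $(1-\Delta_s)^2$ then coincides with the $\mathcal{D}(\Delta_s^2)$ defined in the statement, and its purely imaginary powers satisfy $\bigl((1-\Delta_s)^2\bigr)^{it}=(1-\Delta_s)^{2it}$, hence $\|\bigl((1-\Delta_s)^2\bigr)^{it}\|\le Me^{2\phi|t|}$. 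Since $\phi>0$ is arbitrary, $(1-\Delta_s)^2\in\mathcal{BIP}(\phi)$ for every $\phi>0$, which in turn gives sectoriality with arbitrarily small angle and hence shows that $A_s$ is the negative generator of an analytic semigroup.

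For the asymptotic decomposition I would iterate the max/min analysis for the single Laplacian. Let $u\in\mathcal{D}(\Delta_s^2)$ so $\Delta u=g\in\mathcal{H}^{s+2,\gamma+2}(\mathbb{B})\oplus\mathbb{C}_\omega$. Absorbing the compactly supported smooth contribution coming from the $\mathbb{C}_\omega$ summand of $g$ into a lower-order correction, one places $u$ in $\mathcal{D}(\Delta_{s+2,\max})$ and applies \eqref{maxdom} at weight $\gamma+2$ to obtain $u=u_{\min}+e$, with $u_{\min}\in\mathcal{D}(\Delta_{s+2,\min})\hookrightarrow\bigcap_{\varepsilon>0}\mathcal{H}^{s+4,\gamma+4-\varepsilon}(\mathbb{B})$ and $e\in\mathcal{E}_{\max,\Delta,\gamma+2}$. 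The asymptotic exponents available at this weight are precisely those with $\mathrm{Re}(\rho)\in[\frac{n-7}{2}-\gamma,\frac{n-3}{2}-\gamma)$, matching the range in the statement. The separate $\mathbb{C}_\omega$ summand in the decomposition corresponds to the indicial exponent $\rho=0$ that appears for the \emph{first} application of $\Delta$ at weight $\gamma$ (the relevant range $[\frac{n-3}{2}-\gamma,\frac{n+1}{2}-\gamma)$ contains $0$ for our choice of $\gamma$). The logarithmic powers $k\in\{0,1,2,3\}$ arise from possible coincidences of indicial roots through the two successive applications of $\Delta$, each of which can produce one additional logarithm per coincidence. The refined containment $\mathcal{E}_{\Delta^2,\gamma}\subset\mathcal{H}^{\infty,\gamma+2+\alpha_0}(\mathbb{B})$ for some $\alpha_0>0$ follows from the strict upper bound $\mathrm{Re}(\rho)<\frac{n-3}{2}-\gamma$ on the new asymptotic exponents, which leaves a positive margin above the Mellin threshold of $\mathcal{H}^{s+2,\gamma+2}(\mathbb{B})$; putting everything together yields the embedding $\mathcal{D}(\Delta_s^2)\hookrightarrow\mathcal{H}^{s+4,\gamma+4-\varepsilon}(\mathbb{B})\oplus\mathbb{C}_\omega\oplus\mathcal{E}_{\Delta^2,\gamma}$ for all $\varepsilon>0$.

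The main obstacle I anticipate is not the existence of such a decomposition, which is standard cone-calculus theory, but the \emph{compatibility} condition: isolating the subspace $\mathcal{E}_{\Delta^2,\gamma}$ of $\mathcal{E}_{\max,\Delta^2,\gamma}$ that corresponds exactly to $\mathcal{D}(\Delta_s^2)$ requires tracking how the indicial operator of $\Delta$ acts on each asymptotic term and verifying that $\Delta e$ lands in $\mathbb{C}_\omega\oplus\mathcal{H}^{s+2,\gamma+2}(\mathbb{B})$ rather than merely in $\mathcal{H}^{s,\gamma}(\mathbb{B})$. This is the step where the eigenvalues $\lambda_j$ of $\Delta_{h(0)}$ and the discrete set $Z_{\Delta^2}$ enter, and where the log-power structure must be analyzed carefully, in particular when indicial roots of $\Delta$ happen to differ by even integers and produce resonant interactions under a second application of $\Delta$. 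For the technical details of this cohomological calculation I would follow the treatment in \cite[Section 3.2]{LopesRoidos}.
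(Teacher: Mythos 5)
Your second paragraph's opening move is where the proposal breaks. For the record: the paper itself gives no argument for this corollary at all --- both the decomposition \eqref{bilapintro1} and the generation/$\mathcal{BIP}$ assertion are delegated to \cite{LopesRoidos} (Section 3.2 and Proposition 3.6, respectively) --- and your first paragraph is fine: invertibility of $1-\Delta_s$ from Theorem \ref{thm:Laplacian}, the identification $\mathcal{D}((1-\Delta_s)^2)=\mathcal{D}(\Delta_s^2)$, the power law $\bigl((1-\Delta_s)^2\bigr)^{it}=(1-\Delta_s)^{2it}$, and the Pr\"uss--Sohr control of the spectral angle by the $\mathcal{BIP}$ angle constitute the standard proof of that half.

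The gap is in the claim that the $\mathbb{C}_\omega$ summand of $g=\Delta u$ is a ``compactly supported smooth contribution'' that can be ``absorbed into a lower-order correction'', after which $u\in\mathcal{D}(\Delta_{s+2,\max})$ at weight $\gamma+2$. You are conflating two different objects. The $\mathbb{C}_\omega$ component $c_u$ of $u$ itself is harmless, since $\Delta c_u$ is supported where $d\omega\neq0$, away from the tips. But the $\mathbb{C}_\omega$ component $c_g$ of $\Delta u$ is identically constant near each tip; it is not compactly supported in $\mathbb{B}^\circ$ and does not lie in $\mathcal{H}^{s+2,\gamma+2}(\mathbb{B})$, precisely because $\gamma>\frac{n-3}{2}$ excludes constants from that weight. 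Hence $u-c_u$ need not belong to the maximal domain of $\Delta$ at weight $\gamma+2$, and \eqref{maxdom} cannot be invoked as you state. A concrete counterexample to your step is $u=\omega\, \mathsf{x}^2$: one computes $\Delta u=2(n+1)\omega+r$ with $r\in\mathcal{H}^{s+2,\gamma+2}(\mathbb{B})$, so $u\in\mathcal{D}(\Delta_s^2)$, yet $\Delta u\notin\mathcal{H}^{s+2,\gamma+2}(\mathbb{B})$, so $u\notin\mathcal{D}(\Delta_{s+2,\max})$ at weight $\gamma+2$; moreover $u$ lies in none of $\mathcal{D}(\Delta^2_{s,\min})$, $\mathbb{C}_\omega$, or $\mathcal{E}_{\max,\Delta,\gamma+2}$ (the exponent $2$ is not an indicial root for the eigenvalue $\lambda_0=0$, since $4+2(n-1)=2(n+1)\neq0$). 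So the terms you discard are exactly a missing part of $\mathcal{E}_{\Delta^2,\gamma}$: they arise as particular solutions $w$ of $\Delta w=c_g$, with $x^2$-type behavior at the tips, and your sketch produces a decomposition that provably omits them. Note also that the ``compatibility'' issue you single out as the main obstacle is in fact automatic for $e\in\mathcal{E}_{\max,\Delta,\gamma+2}$, because such $e$ satisfy $\Delta e\in\mathcal{H}^{s+2,\gamma+2}(\mathbb{B})\subset\mathcal{D}(\Delta_s)$ by definition of the maximal domain at that weight; the genuine difficulty sits exactly in the term you tried to absorb. Your concluding deferral to \cite[Section 3.2]{LopesRoidos} --- the same source the paper relies on --- would rescue the final statement, but the mechanism you propose for reaching it fails as written.
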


For $\alpha\in[0,2]$, we define $X_{\alpha}^{s}:=\mathcal{D}((-A_{s})^{\alpha/2})=[\mathcal{H}^{s,\gamma}(\mathbb{B}),\mathcal{D}(\Delta_{s}^{2})]_{\alpha/2}$. Notice that $X_{2}^{s}=\mathcal{D}(\Delta_{s}^{2})$ and $X_{1}^{s}=\mathcal{D}(\Delta_{s})$. We also denote $X_{1}^{\infty}:=\cap_{s\ge0}X_{1}^{s}$ and $X_{2}^{\infty}:=\cap_{s\ge0}X_{2}^{s}$. We close this section with some facts about the complex interpolation of the domain of the Laplacian and bi-Laplacian. 

\begin{prop}\label{thm:Interpolation} 
Let $\alpha\in(0,1)$ and assume that $\gamma$ satisfies \eqref{gammachoice}.\\
\emph{(i)} If $\alpha\notin\{\frac{1-\gamma}{2}\pm\frac{1}{2}\sqrt{(\frac{n-1}{2})^{2}-\lambda_{j}}:j\in\mathbb{N}\}$,
then $[\mathcal{H}^{s,\gamma}(\mathbb{B}),X_{1}^{s}]_{\alpha}=\mathcal{H}^{s+2\alpha,\gamma+2\alpha}(\mathbb{B})\oplus\mathbb{C}_{\omega}$.\\
\emph{(ii)} We always have $\mathcal{H}^{s+4\alpha,\gamma+4\alpha}(\mathbb{B})\oplus\mathbb{C}_{\omega}\oplus\mathcal{E}_{\Delta^{2},\gamma}\subset[\mathcal{H}^{s,\gamma}(\mathbb{B}),X_{2}^{s}]_{\alpha}$.\\
\emph{(iii)} If $2\alpha\notin\{\frac{1-\gamma}{2}\pm\frac{1}{2}\sqrt{(\frac{n-1}{2})^{2}-\lambda_{j}}:j\in\mathbb{N}\}$,
then 
$$
[\mathcal{H}^{s,\gamma}(\mathbb{B}),X_{2}^{s}]_{\alpha}\hookrightarrow\cap_{\varepsilon>0}\mathcal{H}^{s+4\alpha,\gamma+4\alpha-\varepsilon}\oplus\mathbb{C}_{\omega}\oplus \underline{\mathcal{E}}_{\Delta^{2},\gamma+4(\alpha-1)},
$$
where $\underline{\mathcal{E}}_{\Delta^{2},\gamma+4(\alpha-1)}\subset\mathcal{H}^{\infty,\gamma+2}(\mathbb{B})$ is a subspace of the asymptotic space $\mathcal{E}_{\max,\Delta^{2},\gamma+4(\alpha-1)}$. We note that the sums above are not necessarily direct.
\end{prop}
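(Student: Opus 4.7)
The three parts share a common strategy: combine the bounded imaginary powers property of $c-\Delta_s$ from Theorem \ref{thm:Laplacian} and of $(1-\Delta_s)^2$ from Corollary \ref{cor:(Bi-Laplacian)} with the standard identification $[X,\mathcal{D}(A)]_\theta = \mathcal{D}((-A)^\theta)$ recalled in Section \ref{sec:Realizations}. This yields
\begin{equation*}
[\mathcal{H}^{s,\gamma}(\mathbb{B}),X_1^s]_\alpha = \mathcal{D}((c-\Delta_s)^\alpha), \qquad [\mathcal{H}^{s,\gamma}(\mathbb{B}),X_2^s]_\alpha = \mathcal{D}((1-\Delta_s)^{2\alpha}),
\end{equation*}
so the proof reduces to describing these domains of complex powers of cone operators in terms of Mellin-Sobolev and asymptotic spaces. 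The cone calculus provides exactly such a description: the domain of a complex power decomposes as a Mellin-Sobolev space at the shifted weight plus a finite-dimensional asymptotic space collecting the conormal exponents of the operator that fall inside the new weight window.

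For part (i), I would apply this description with exponent $\alpha$: the shift sends the weight $\gamma$ to $\gamma+2\alpha$, and the new window $[\tfrac{n-3}{2}-(\gamma+2\alpha),\tfrac{n+1}{2}-(\gamma+2\alpha))$ meets a nontrivial conormal exponent of $\Delta$ precisely when $\alpha$ lies in the exceptional set $\{\tfrac{1-\gamma}{2}\pm\tfrac{1}{2}\sqrt{(\tfrac{n-1}{2})^2-\lambda_j}\}$. If this set is avoided, the only contribution from the asymptotic side is the locally constant piece $\mathbb{C}_\omega$ already present in $X_1^s$, giving the sharp equality $\mathcal{D}((c-\Delta_s)^\alpha)=\mathcal{H}^{s+2\alpha,\gamma+2\alpha}(\mathbb{B})\oplus\mathbb{C}_\omega$.

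Part (ii) is a soft argument. The finite-dimensional spaces $\mathbb{C}_\omega$ and $\mathcal{E}_{\Delta^2,\gamma}$ lie in $X_2^s$, hence in $[\mathcal{H}^{s,\gamma}(\mathbb{B}),X_2^s]_\alpha$ via the continuous embedding $X_2^s\hookrightarrow[\mathcal{H}^{s,\gamma}(\mathbb{B}),X_2^s]_\alpha$. For the Mellin-Sobolev factor I would use \eqref{bilapintro2} to embed $\mathcal{H}^{s+4,\gamma+4}(\mathbb{B})\hookrightarrow X_2^s$ and then invoke the complex interpolation identity $[\mathcal{H}^{s,\gamma}(\mathbb{B}),\mathcal{H}^{s+4,\gamma+4}(\mathbb{B})]_\alpha=\mathcal{H}^{s+4\alpha,\gamma+4\alpha}(\mathbb{B})$, which follows from the definition of Mellin-Sobolev spaces via $M_\gamma$ and the corresponding interpolation of Bessel potential spaces on $\mathbb{R}^{n+1}$.

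Part (iii) is the most delicate and contains the main obstacle. I would again apply the cone-calculus description, this time to $(1-\Delta_s)^{2\alpha}$; the effective shift in the weight is $4\alpha$, which explains why the hypothesis is on $2\alpha$ rather than $\alpha$. The widened window of length $4$ may now intersect nontrivial conormal exponents of $\Delta^2$, producing the finite-dimensional space $\underline{\mathcal{E}}_{\Delta^2,\gamma+4(\alpha-1)}\subset\mathcal{E}_{\max,\Delta^2,\gamma+4(\alpha-1)}$. The $\varepsilon$-loss in the Mellin weight mirrors the analogous loss $\mathcal{D}(\Delta_{s,\min}^2)\hookrightarrow\cap_{\varepsilon>0}\mathcal{H}^{s+4,\gamma+4-\varepsilon}(\mathbb{B})$ from \eqref{bilapintro2}, inherited through the fractional-power description. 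The hard part is to push the identification of the complex-power domain carefully enough to separate the minimal/Mellin-Sobolev contribution (only up to $\varepsilon$-loss) from the asymptotic contribution $\underline{\mathcal{E}}_{\Delta^2,\gamma+4(\alpha-1)}$, and to verify that the latter actually lies inside $\mathcal{H}^{\infty,\gamma+2}(\mathbb{B})$, matching the regularity asserted in the statement.
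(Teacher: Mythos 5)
Parts (i) and (ii) of your proposal are correct and essentially coincide with the paper's treatment: for (i) the paper likewise reduces, via the $\mathcal{BIP}$ property, to a known description of $\mathcal{D}((c-\Delta_s)^{\alpha})$ (it simply cites \cite[Lemma 4.5 (i)]{LopesRoidos}, which is the "cone calculus description" you invoke), and for (ii) the paper gives exactly your soft argument, combining $\mathbb{C}_{\omega}\oplus\mathcal{E}_{\Delta^{2},\gamma}\hookrightarrow X_{2}^{s}\hookrightarrow[\mathcal{H}^{s,\gamma}(\mathbb{B}),X_{2}^{s}]_{\alpha}$ with $[\mathcal{H}^{s,\gamma}(\mathbb{B}),\mathcal{H}^{s+4,\gamma+4}(\mathbb{B})]_{\alpha}=\mathcal{H}^{s+4\alpha,\gamma+4\alpha}(\mathbb{B})$.

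In part (iii), however, there is a genuine gap. Your plan is to apply to $(1-\Delta_s)^{2\alpha}$, with $2\alpha\in(1,2)$, a general principle describing domains of fractional powers of cone operators as a shifted Mellin--Sobolev space plus window asymptotics. No such result is available at that level of generality: the description used in (i) covers powers of the Laplacian realization of order at most $2$, while here one would need orders in $(2,4)$ for a non-minimal extension containing $\mathbb{C}_{\omega}$, together with the $\varepsilon$-loss and the precise claim $\underline{\mathcal{E}}_{\Delta^{2},\gamma+4(\alpha-1)}\subset\mathcal{H}^{\infty,\gamma+2}(\mathbb{B})$ --- which is exactly the statement to be proved. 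Your closing sentence ("the hard part is to push the identification...") names the difficulty but supplies no mechanism, so the proposal assumes what it must establish. The paper circumvents this entirely by a two-sided embedding argument. It first splits into cases: for $\alpha\in(0,1/2]$, $\mathcal{BIP}$ for both $-A_s$ and $1-\Delta_s$ gives $[\mathcal{H}^{s,\gamma}(\mathbb{B}),X_{2}^{s}]_{\alpha}=[\mathcal{H}^{s,\gamma}(\mathbb{B}),\mathcal{D}(\Delta_{s})]_{2\alpha}$, and part (i) applies with $2\alpha$ in place of $\alpha$ --- this is where the hypothesis on $2\alpha$ enters. For $\alpha\in(1/2,1)$ it (a) interpolates the bounded map $\Delta^{2}$ between the couples to get $\Delta^{2}[\mathcal{H}^{s,\gamma}(\mathbb{B}),X_{2}^{s}]_{\alpha}\hookrightarrow\mathcal{H}^{s+4(\alpha-1),\gamma+4(\alpha-1)}(\mathbb{B})$, so the interpolation space sits inside the maximal domain $\mathcal{D}(\Delta^{2}_{s+4(\alpha-1),\max})$, whose decomposition into minimal domain (with $\varepsilon$-loss) plus $\mathcal{E}_{\max,\Delta^{2},\gamma+4(\alpha-1)}$ is known from the cone calculus for the differential operator $\Delta^{2}$ itself, with no fractional powers involved; (b) uses $\mathcal{BIP}$ only through the elementary embedding $\mathcal{D}((1-\Delta_{s})^{2\alpha})\hookrightarrow\mathcal{D}(\Delta_{s})=\mathcal{H}^{s+2,\gamma+2}(\mathbb{B})\oplus\mathbb{C}_{\omega}$, valid since $2\alpha>1$; and (c) intersects: writing $u=v+w$ with $w$ in the asymptotics space, $w$ is forced into $(\mathcal{H}^{s+2,\gamma+2}(\mathbb{B})\oplus\mathbb{C}_{\omega})\cap\mathcal{E}_{\max,\Delta^{2},\gamma+4(\alpha-1)}$, which is precisely how $\underline{\mathcal{E}}_{\Delta^{2},\gamma+4(\alpha-1)}\subset\mathcal{H}^{\infty,\gamma+2}(\mathbb{B})$ arises, and also why the conclusion is only an embedding with sums that need not be direct. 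If you want to repair your proof, this sandwich-and-intersect scheme is the missing ingredient.
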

\begin{proof}
(i) It follows from \cite[Lemma 4.5 (i)]{LopesRoidos}.\\
(ii) It follows from the inclusions $\mathbb{C}_{\omega}\oplus\mathcal{E}_{\Delta^{2},\gamma}\hookrightarrow\mathcal{D}(\Delta_{s}^{2})=X_{2}^{s}$
and
$$
\mathcal{H}^{s+4\alpha,\gamma+4\alpha}(\mathbb{B}) =[\mathcal{H}^{s,\gamma}(\mathbb{B}),\mathcal{H}^{s+4,\gamma+4}(\mathbb{B})]_{\alpha}\hookrightarrow[\mathcal{H}^{s,\gamma}(\mathbb{B}),X_{2}^{s}]_{\alpha},
$$
where in the first equality we have used \cite[Lemma 3.3 (iii)]{LopesRoidos}.\\
(iii) \emph{Case $\alpha\in(0,1/2]$.} We have
$$
[\mathcal{H}^{s,\gamma}(\mathbb{B}),X_{2}^{s}]_{\alpha}=\mathcal{D}((-A_{s})^{\alpha})=\mathcal{D}((1-\Delta_{s})^{2\alpha})=[\mathcal{H}^{s,\gamma}(\mathbb{B}),\mathcal{D}(\Delta_{s})]_{2\alpha}=\mathcal{H}^{s+4\alpha,\gamma+4\alpha}(\mathbb{B})\oplus\mathbb{C}_{\omega}.
$$
In the first and third equalities we have used the $\mathcal{BIP}$
property of $1-\Delta_s$ and $-A_s$. In the last equality, we
have used (i).\\
\emph{Case $\alpha\in(1/2,1)$.} We first note that
$$
\Delta^{2}[\mathcal{H}^{s,\gamma}(\mathbb{B}),\mathcal{D}(\Delta_{s}^{2})]_{\alpha}\hookrightarrow[\mathcal{H}^{s-4,\gamma-4}(\mathbb{B}),\mathcal{H}^{s,\gamma}(\mathbb{B})]_{\alpha}=\mathcal{H}^{s+4(\alpha-1),\gamma+4(\alpha-1)}(\mathbb{B}),
$$
where \cite[Lemma 3.3 (iii)]{LopesRoidos} was used in the last equality. Hence, we have
\begin{equation}
[\mathcal{H}^{s,\gamma}(\mathbb{B}),X_{2}^{s}]_{\alpha}\hookrightarrow \mathcal{D}(\Delta_{s+4(\alpha-1),\max}^{2})\subset\mathcal{H}^{s+4\alpha,\gamma+4\alpha-\varepsilon}(\mathbb{B})\oplus\mathcal{E}_{\max,\Delta^{2},\gamma+4(\alpha-1)}.\label{eq:1}
\end{equation}
Moreover,
\begin{equation}
[\mathcal{H}^{s,\gamma}(\mathbb{B}),X_{2}^{s}]_{\alpha}=\mathcal{D}((1-\Delta_{s})^{2\alpha})\hookrightarrow\mathcal{D}(\Delta_{s})=\mathcal{H}^{s+2,\gamma+2}(\mathbb{B})\oplus\mathbb{C}_{\omega},\label{eq:2}
\end{equation}
where in the first equality we have used the $\mathcal{BIP}$ property of $-A_s$. Now let $u\in[\mathcal{H}^{s,\gamma}(\mathbb{B}),X_{2}^{s}]_{\alpha}$.
By (\ref{eq:1}), we have that $u=v + w$, where $v\in\mathcal{H}^{s+4\alpha,\gamma+4\alpha-\varepsilon}(\mathbb{B})$
and $w\in\mathcal{E}_{\max,\Delta^{2},\gamma+4(\alpha-1)}$. Hence, by (\ref{eq:2}) and for sufficiently small $\varepsilon >0$, we
have
$$
w=u-v\in\mathcal{H}^{s+2,\gamma+2}(\mathbb{B})\oplus\mathbb{C}_{\omega}+\mathcal{H}^{s+4\alpha,\gamma+4\alpha-\varepsilon}(\mathbb{B})=\mathcal{H}^{s+2,\gamma+2}(\mathbb{B})\oplus\mathbb{C}_{\omega}.
$$
Therefore 
$$
w\in\left(\mathcal{H}^{s+2,\gamma+2}(\mathbb{B})\oplus\mathbb{C}_{\omega}\right)\cap\mathcal{E}_{\max,\Delta^{2},\gamma+4(\alpha-1)}\subset\mathbb{C}_{\omega}\oplus\left(\mathcal{E}_{\max,\Delta^{2},\gamma+4(\alpha-1)}\cap\mathcal{H}^{s+2,\gamma+2}(\mathbb{B})\right),
$$
which concludes the proof.
\end{proof}

\section{\label{sec:Existence-and-regularity-1}Existence
and regularity of the global attractors}

For the rest of the paper $\gamma$ is fixed and satisfies \eqref{gamma}. The constants $C>0$ may change along the computations.

In this section we prove part (i) of Theorem \ref{thm:MainTheorem}. In the sequel all the spaces we use are the real parts of the ones defined previously.
Recall that a global attractor for a semiflow
$T:[0,\infty)\times X\to X$ defined on a Hilbert $X$ is a compact
set $\mathcal{A}\subset X$ such that $T(t)\mathcal{A}:=\{T(t)x:x\in\mathcal{A}\}=\mathcal{A}$,
for all $t\ge0$, which, moreover, attracts all bounded sets $B\subset X$
in the following sense:
$$
\lim_{t\to\infty}(\sup_{b\in B}\inf_{a\in\mathcal{A}}\left\Vert T(t)b-a\right\Vert _{X})=0.
$$
If it exists, then it is unique.

As mentioned in the introduction, for any $s\ge0$, we can define a
semiflow $T$ in $X_{1}^{s}$. It is convenient, however, to restrict
$T$ to a smaller space. For this reason, we first prove the following proposition.
\begin{prop}
\label{prop:avaragedoesnotchange}Let $u_{0}\in X_{1}^{s}$, then
the function $(u)_{\mathbb{B}}:(0,\infty)\to\mathbb{R}$, defined
by
$$
(u)_{\mathbb{B}}(t):=\frac{1}{|\mathbb{B}|}\int_{\mathbb{B}}T(t)u_{0}d\mu_{g},\,\,t>0,
$$
is constant. Here $|\mathbb{B}|=\int_{\mathbb{B}}d\mu_{g}$.
\end{prop}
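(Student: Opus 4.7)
The plan is to differentiate $(u)_\mathbb{B}(t)$ in $t$, substitute the Cahn-Hilliard equation, and then recognise each term on the right-hand side as the integral of a Laplacian to which the corollary of the Gauss identity in Theorem~\ref{thm:Gauss-Theorem} applies.

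First I would verify that $(u)_\mathbb{B}(t)$ is well defined and absolutely continuous. Since $\gamma>-1$ by~\eqref{gamma}, applying Proposition~\ref{prop:HzerobetadualH1} to the constant test function $1$, which lies in $H^{1}(\mathbb{B})$ by Step~3 of Proposition~\ref{prop:inclusionH1mellin}, shows that the functional $v\mapsto \int_\mathbb{B} v\,d\mu_g$ is continuous on $\mathcal{H}^{0,\gamma}(\mathbb{B})\supseteq\mathcal{H}^{s,\gamma}(\mathbb{B})$. Composing with $u\in H^{1}(0,T;\mathcal{H}^{s,\gamma}(\mathbb{B}))$ gives that $t\mapsto (u)_\mathbb{B}(t)$ is absolutely continuous on every compact interval and
$$
\frac{d}{dt}(u)_\mathbb{B}(t)=\frac{1}{|\mathbb{B}|}\int_\mathbb{B} u'(t)\,d\mu_g\quad\text{for a.e. }t>0.
$$
Inserting~\eqref{eq:mainequation}, which holds in $\mathcal{H}^{s,\gamma}(\mathbb{B})$ for a.e.\ $t$, the proposition reduces to showing
$$
\int_\mathbb{B}\Delta^{2}u(t)\,d\mu_g=0\quad\text{and}\quad\int_\mathbb{B}\Delta\bigl(u^{3}(t)-u(t)\bigr)\,d\mu_g=0.
$$

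Both identities follow from the final statement of Theorem~\ref{thm:Gauss-Theorem}: it suffices to exhibit, in each case, a function $w\in H^{1}(\mathbb{B})$ with $\Delta w\in\mathcal{H}^{0,\gamma'}(\mathbb{B})$ for some $\gamma'>-1$. For the first, I take $w:=\Delta u(t)$. Since $u(t)\in X_{1}^{s}=\mathcal{D}(\Delta_s)$, we have $\Delta u(t)\in\mathcal{H}^{s+2,\gamma+2}(\mathbb{B})\oplus\mathbb{R}_{\omega}$, and the constraints $s\ge 0$ and $\gamma>-1$ give the continuous embedding into $\mathcal{H}^{1,1}(\mathbb{B})\oplus\mathbb{R}_{\omega}\hookrightarrow H^{1}(\mathbb{B})$ via Proposition~\ref{prop:inclusionH1mellin}; moreover $\Delta w=\Delta^{2}u(t)\in\mathcal{H}^{s,\gamma}(\mathbb{B})$ with $\gamma>-1$. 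For the second, I take $w:=u^{3}(t)-u(t)$. The embedding~\eqref{contsolu} gives $u(t)\in H^{1}(\mathbb{B})\cap C(\mathbb{B})$, so from $\nabla(u^{3})=3u^{2}\nabla u$ and the pointwise bound on $u$ we conclude $u^{3}(t)\in H^{1}(\mathbb{B})$; moreover $\Delta(u^{3}(t)-u(t))=u'(t)+\Delta^{2}u(t)\in\mathcal{H}^{s,\gamma}(\mathbb{B})$ directly from the equation. Both integrals therefore vanish, so $(u)_\mathbb{B}'\equiv 0$ almost everywhere, and by absolute continuity $(u)_\mathbb{B}$ is constant on $(0,\infty)$.

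I expect no serious obstacle; the only point requiring care is confirming that the nonlinear term $u^{3}(t)$ lies in the genuine space $H^{1}(\mathbb{B})$ rather than in a weaker Mellin-Sobolev space, which is handled by exploiting the continuous embedding~\eqref{contsolu} into $C(\mathbb{B})$ to control $u^{2}\nabla u$ pointwise.
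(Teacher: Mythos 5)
Your overall strategy---differentiate $(u)_{\mathbb{B}}$, insert \eqref{eq:mainequation}, and kill both resulting terms with the last statement of Theorem \ref{thm:Gauss-Theorem}---is exactly the paper's route (its proof is a one-line reference to \cite[Equation (4.61)]{Temam} together with \eqref{extrareg} and Theorem \ref{thm:Gauss-Theorem}). However, two of your steps are not justified as written. First, ``since $u(t)\in X_{1}^{s}=\mathcal{D}(\Delta_{s})$, we have $\Delta u(t)\in\mathcal{H}^{s+2,\gamma+2}(\mathbb{B})\oplus\mathbb{R}_{\omega}$'' is a non sequitur: membership in $\mathcal{D}(\Delta_{s})$ only gives $\Delta u(t)\in\mathcal{H}^{s,\gamma}(\mathbb{B})$, which is not enough to place $\Delta u(t)$ in $H^{1}(\mathbb{B})$, so Theorem \ref{thm:Gauss-Theorem} cannot yet be applied to $w=\Delta u(t)$. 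What you actually need is $u(t)\in\mathcal{D}(\Delta_{s}^{2})$, i.e. $\Delta u(t)\in\mathcal{D}(\Delta_{s})$. This is available either for every $t>0$ from the smoothing property \eqref{extrareg} (the ingredient the paper's proof explicitly invokes), or for a.e. $t$ from $u\in L^{2}(0,T;\mathcal{D}(\Delta_{s}^{2}))$; either version suffices, since your argument only requires $(u)_{\mathbb{B}}'=0$ almost everywhere.

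Second, your argument that $u^{3}(t)\in H^{1}(\mathbb{B})$ is incomplete. The space $H^{1}(\mathbb{B})$ is \emph{defined} as the completion of $C_{c}^{\infty}(\mathbb{B}^{\circ})$, so finiteness of the $H^{1}$-type integrals deduced from the pointwise bound $u(t)\in C(\mathbb{B})$ and $\nabla(u^{3})=3u^{2}\nabla u$ does not by itself yield membership; this is precisely why the paper runs explicit approximation arguments (Steps 2 and 3 of Proposition \ref{prop:inclusionH1mellin}) even to put $\mathsf{x}^{\varepsilon}$ and the constants into $H^{1}(\mathbb{B})$. A clean repair stays inside the Mellin--Sobolev scale: since $\dim(\mathbb{B})\in\{2,3\}$ and $\gamma$ satisfies \eqref{gamma}, one has $s+2>(n+1)/2$ and $\gamma+2\ge(n+1)/2$, so $\mathcal{H}^{s+2,\gamma+2}(\mathbb{B})$ is a Banach algebra (Definition \ref{def:mellinsobolevspaces}); writing $u(t)=v+c\omega$ with $v\in\mathcal{H}^{s+2,\gamma+2}(\mathbb{B})$, $c\in\mathbb{R}$, and noting that $\omega^{2}-\omega$ and $\omega^{3}-\omega$ are smooth and supported away from $\partial\mathcal{B}$, one gets $u^{3}(t)-u(t)\in\mathcal{H}^{s+2,\gamma+2}(\mathbb{B})\oplus\mathbb{R}_{\omega}\hookrightarrow\mathcal{H}^{1,1}(\mathbb{B})\oplus\mathbb{R}_{\omega}\hookrightarrow H^{1}(\mathbb{B})$ by Proposition \ref{prop:inclusionH1mellin}, while $\Delta(u^{3}(t)-u(t))=u'(t)+\Delta^{2}u(t)\in\mathcal{H}^{s,\gamma}(\mathbb{B})$ with $\gamma>-1$, as you say. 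With these two repairs your proof is correct and coincides with the paper's.
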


\begin{proof}
The proof is similar to \cite[Equation (4.61)]{Temam}, where we have
to take into account \eqref{extrareg} and Theorem \ref{thm:Gauss-Theorem}.
\end{proof}
Consider the Hilbert space
$$
X_{1,0}^{s}=\{u\in\mathcal{H}^{s+2,\gamma+2}(\mathbb{B})\oplus\mathbb{R}_{\omega}:\,(u)_{\mathbb{B}}=0\}.
$$
By Proposition \ref{prop:avaragedoesnotchange}, $T(t)X_{1,0}^{s}\subset X_{1,0}^{s}$.
Therefore, $T$ restricts to a semiflow on $X_{1,0}^{s}$. Concerning
the existence of global attractors, we recall the following result.

For two Hilbert spaces $X$ and $Y$ such that $Y\hookrightarrow X$
and a semiflow $T:[0,\infty)\times X\to X$, we define the $\omega$-limit
set $\omega_{Z}(B)$ of $B\subset X$, where $Z=X$ or $Y$, by
\begin{equation}
\omega_{Z}(B)=\Big\{ z\in Z\, :\, \exists\quad t_{n}\to\infty\quad\text{and}\quad\{x_{n}\}_{n\in\mathbb{N}}\subset B\quad\text{such that}\quad\lim_{n\to\infty}\left\Vert T(t_{n})x_{n}-z\right\Vert _{Z}=0\Big\} .\label{eq:omegalimit}
\end{equation}
If $Z=Y$, then the above definition requires that $T(t_{n})x_{n}\in Y$
for all $n\in\mathbb{N}$. In order to show existence and regularity
of global attractors, we prove the following variation of
\cite[Theorem 10.5]{Robinson}.
\begin{thm}
\label{thm:Globalattractor}Let $Y\hookrightarrow X$ be Hilbert spaces,
$T:[0,\infty)\times X\to X$ be a semiflow and $\mathcal{K}\subset Y$
be a compact set in $Y$. Assume that for all bounded sets $B\subset X$
there exists a constant $t_{B}>0$ such that, if $t>t_{B}$, then
$T(t)B\subset\mathcal{K}$. Under these conditions there exists a
(unique) global connected attractor $\mathcal{A}$ for the semiflows
$T$. Moreover, $\mathcal{A}=\omega_{X}(\mathcal{K})=\omega_{Y}(\mathcal{K})$
is contained in $Y$ and attracts bounded sets of $X$ in $Y$ in
the following sense: for any bounded set $B\subset X$, the set $T(t)B$
is bounded in $Y$ for large $t$ and 
$$
\lim_{t\to\infty}\sup_{b\in B}\inf_{a\in\mathcal{A}}\left\Vert T(t)b-a\right\Vert _{Y}=0.
$$
\end{thm}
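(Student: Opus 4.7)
The strategy is to follow the classical proof of Robinson/Temam-type attractor theorems, but to upgrade every convergence argument from the $X$-topology to the stronger $Y$-topology by exploiting the fact that $\mathcal{K}$ is itself a positively invariant \emph{compact} absorbing set after a finite waiting time. Since $Y\hookrightarrow X$ makes $\mathcal{K}$ bounded in $X$, the hypothesis applied to $B=\mathcal{K}$ yields a time $t_{\mathcal{K}}>0$ such that $T(t)\mathcal{K}\subset\mathcal{K}$ for every $t>t_{\mathcal{K}}$. I then set $\mathcal{A}:=\omega_{X}(\mathcal{K})$ and aim to show that $\mathcal{A}$ has the four defining properties of a global attractor and actually lies in $Y$.

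The key first step is to show $\mathcal{A}\subset Y$ together with $\omega_{X}(\mathcal{K})=\omega_{Y}(\mathcal{K})$. The inclusion $\omega_{Y}(\mathcal{K})\subset\omega_{X}(\mathcal{K})$ is immediate from the continuous embedding. Conversely, take $z\in\omega_{X}(\mathcal{K})$ realised as $T(t_{n})x_{n}\to z$ in $X$, with $x_{n}\in\mathcal{K}$ and $t_{n}\to\infty$. For $n$ large, $t_{n}>t_{\mathcal{K}}$ and therefore $T(t_{n})x_{n}\in\mathcal{K}$; since $\mathcal{K}$ is compact in $Y$, a subsequence converges in $Y$ to some $z'\in Y$, and uniqueness of limits (through $Y\hookrightarrow X$) forces $z'=z$. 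The very same compactness argument also establishes that $\mathcal{A}$ is non-empty and compact in $Y$, hence in $X$.

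With this topology transfer at hand, the remaining properties are established by standard arguments, now carried out in $Y$ rather than in $X$. Invariance $T(t)\mathcal{A}=\mathcal{A}$ is obtained from the continuity of $T(t)$ together with a subsequence extraction in $\mathcal{K}\subset Y$: given $z=\lim T(t_{n})x_{n}\in\mathcal{A}$, continuity gives $T(t)z\in\mathcal{A}$, while for the reverse inclusion $T(t_{n}-t)x_{n}\in\mathcal{K}$ admits a $Y$-convergent subsequence whose limit $w\in\mathcal{A}$ satisfies $T(t)w=z$. For attraction, fix a bounded $B\subset X$: the inclusion $T(t)B\subset\mathcal{K}$ for $t>t_{B}$ delivers the announced $Y$-boundedness. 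If one assumes by contradiction that $\inf_{a\in\mathcal{A}}\|T(t_{n})b_{n}-a\|_{Y}\ge\delta>0$ for some $b_{n}\in B$, $t_{n}\to\infty$, then writing $T(t_{n})b_{n}=T(t_{n}-\tau)(T(\tau)b_{n})$ with a fixed $\tau>t_{B}$ places $T(t_{n})b_{n}\in\mathcal{K}$ for $n$ large, and $Y$-compactness of $\mathcal{K}$ produces a subsequential limit in $\omega_{Y}(\mathcal{K})=\mathcal{A}$, contradicting the lower bound $\delta$.

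Uniqueness of $\mathcal{A}$ follows by the classical comparison of two candidate attractors, each of which attracts the other and is invariant. Connectedness is obtained by enlarging $\mathcal{K}$ to a closed bounded connected set $\widetilde{B}\subset X$ (for instance a closed ball) which is still absorbed into $\mathcal{K}$ in finite time, and realising $\mathcal{A}$ as the decreasing $Y$-intersection of the closures of $\bigcup_{s\ge t}T(s)\widetilde{B}$, which are connected as continuous images of a connected set; the nested intersection of non-empty compact connected sets in the Hausdorff space $Y$ is itself connected. The main technical obstacle is precisely the topology transfer in the second paragraph above; once one knows that $\omega$-limit points arising from $\mathcal{K}$ lie in $Y$ and that convergence can be upgraded to $Y$ via the positive invariance of $\mathcal{K}$, the rest is a direct adaptation of the framework in \cite{Robinson}.
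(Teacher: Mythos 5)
Your proposal is correct and follows essentially the same route as the paper: the identification $\omega_X(\mathcal{K})=\omega_Y(\mathcal{K})$ via the positive invariance of the compact absorbing set $\mathcal{K}$ (obtained by applying the hypothesis to $B=\mathcal{K}$), the realisation of $\mathcal{A}$ as a nested intersection of compact sets in $Y$, and the contradiction argument for attraction in the $Y$-norm using $T(t_n)b_n=T(t_n-\tau)T(\tau)b_n$ with $T(\tau)b_n\in\mathcal{K}$. The only difference is that you sketch the invariance and connectedness steps explicitly where the paper defers them to the cited references, which does not change the substance of the argument.
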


\begin{proof}
First we show that $\omega_{X}(\mathcal{K})=\omega_{Y}(\mathcal{K})$.
Since $Y\hookrightarrow X$, the definition given by (\ref{eq:omegalimit})
implies that $\omega_{Y}(\mathcal{K})\subset\omega_{X}(\mathcal{K})$.
On the other hand, if $x\in\omega_{X}(\mathcal{K})$ and 
$$
\lim_{n\to\infty}\left\Vert T(t_{n})x_{n}-x\right\Vert _{X}=0
$$
for some $t_{n}\to\infty$ and $\{x_{n}\}_{n\in\mathbb{N}}\subset\mathcal{K}$, then $T(t_{n})x_{n}\in\mathcal{K}$
for all $t_{n}>t_{\mathcal{K}}$. By the compactness of $\mathcal{K}$
in $Y$, some subsequence $\{T(t_{n_{j}})x_{n_{j}}\}_{j\in\mathbb{N}}$ converges
in $Y$, which implies that $x\in\omega_{Y}(\mathcal{K})$.

Let $\mathcal{A}:=\omega_{X}(\mathcal{K})=\omega_{Y}(\mathcal{K})$.
Using the notation $\overline{\mathcal{C}}^{Y}$ for the closure of
$\mathcal{C}$ in $Y$, (\ref{eq:omegalimit}) says that
$$
\omega_{Y}(\mathcal{K})=\cap_{t>0}\overline{\cup_{s>t}(T(s)\mathcal{K}\cap Y)}^{Y}=\cap_{t>t_{\mathcal{K}}}\overline{\cup_{s>t}T(s)\mathcal{K}}^{Y},
$$
which implies that $\mathcal{A}$ is a non-empty compact set of $Y$
- and also of $X$ - since it is the intersection of decreasing non-empty compact
sets. The invariance of $\mathcal{A}$ follows easily from (\ref{eq:omegalimit}),
see also \cite[Proposition 1.1.1]{CD}.

Finally, the fact that $\mathcal{A}$ attracts bounded sets of $X$ in $Y$
- and in $X$ as well - follows from the arguments of \cite[Theorem
10.5]{Robinson}, which we provide for completeness. Let us suppose that $\lim_{t\to\infty}\sup_{b\in B}\inf_{a\in\mathcal{A}}\left\Vert T(t)b-a\right\Vert _{Y}=0$
does not hold for some bounded set $B\subset X$. Then, we
can find sequences $t_{n}\to\infty$, $\{b_{n}\}_{n\in\mathbb{N}}\subset B$ and
$\varepsilon_{0}>0$ such that $\inf_{a\in\mathcal{A}}\left\Vert T(t_{n})b_{n}-a\right\Vert_Y >\varepsilon_{0}$
for all $n$. But for $t_{n}>t_{B}$ and $b\in B$, then $T(t_{n})b\in\mathcal{K}$
which is a compact set in $Y$. Hence a subsequence $\{T(t_{n_{j}})b_{n_{j}}\}_{j\in\mathbb{N}}$
converges in $Y$. As $T(t_{n_{j}})b_{n_{j}}=T(t_{n_{j}}-t_{n})T(t_{n})b_{n_{j}}$ and $T(t_{n})b_{n_{j}}\in\mathcal{K}$, we conclude that the subsequence $\{T(t_{n_{j}})b_{n_{j}}\}$ converges to an element of $\omega_{Y}(\mathcal{K})=\mathcal{A}$, which gives us a contradiction. Connectness follows
exactly as in \cite{Robinson}.
\end{proof}
For our purposes we use the following consequence.
\begin{cor}
\label{cor:Tofiindattractors}Let $X$ be a Hilbert space, $A:\mathcal{D}(A)\subset X\to X$
be a negative generator of an analytic semigroup with compact resolvent
and $F:X_{\alpha}=\mathcal{D}((-A)^{\alpha})\to X$, $0\le\alpha<1$,
be a locally Lipschitz function. Consider the following problem 
$$
\begin{aligned}u'(t) & =Au(t)+F(u(t)),\\
u(0) & =u_{0},
\end{aligned}
$$
where $u_{0}\in X_{\alpha}$. Assume that\\
{\em(i)} There is a closed subspace $\widetilde{X}_{\alpha}\subset X_{\alpha}$ such that for all $u_{0}\in\widetilde{X}_{\alpha}$, a global solution $u\in C^{1}((0,\infty),X)\cap C((0,\infty),\mathcal{D}(A))\cap C([0,\infty),\widetilde{X}_{\alpha})$
is defined. \\
{\em(ii)} There are Hilbert spaces $Y$ and $W$ such that $Y\overset{c}{\hookrightarrow}W\hookrightarrow X_{\alpha}$
and a constant $C_{Y}>0$ with the following property: for every $R>0$,
there exists $t_{R}>0$ such that if $u_{0}\in \widetilde{X}_{\alpha}$ and $\left\Vert u_{0}\right\Vert_{X_{\alpha}} \le R$,
then $u(t,u_{0})\in Y$ and $\left\Vert u(t,u_{0})\right\Vert _{Y}\le C_{Y}$,
for all $t>t_{R}$.

Then the semiflow $T:[0,\infty)\times \widetilde{X}_{\alpha}\to \widetilde{X}_{\alpha}$ defined
by $T(t)u_{0}=u(t)$ has a global connected attractor $\mathcal{A}$
that is contained in $W$ and attracts bounded sets of $\widetilde{X}_{\alpha}$
in $W$.
\end{cor}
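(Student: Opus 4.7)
The strategy is to reduce the statement to Theorem~\ref{thm:Globalattractor} by producing a single set that is compact in $W$ and absorbs every bounded subset of $\widetilde{X}_\alpha$ under the semiflow $T$.

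First I would extract an absorbing ball in $Y$ from hypothesis (ii). For an arbitrary bounded $B\subset\widetilde{X}_\alpha$, set $R=\sup_{b\in B}\|b\|_{X_\alpha}$; by (ii) there is $t_R>0$ such that $T(t)B\subset\mathcal{B}_Y:=\{y\in Y:\|y\|_Y\le C_Y\}$ for every $t>t_R$. Combined with (i), which ensures $T(t)B\subset\widetilde{X}_\alpha$, this yields $T(t)B\subset\mathcal{B}_Y\cap\widetilde{X}_\alpha$ for all $t>t_R$.

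Next I would define the candidate absorbing compact set
$$
\mathcal{K}:=\overline{\mathcal{B}_Y}^{\,W}\cap\widetilde{X}_\alpha.
$$
Since $Y\overset{c}{\hookrightarrow}W$, the ball $\mathcal{B}_Y$ is precompact in $W$, so $\overline{\mathcal{B}_Y}^{\,W}$ is compact in $W$. Because $\widetilde{X}_\alpha$ is a closed subspace of $X_\alpha$ and $W\hookrightarrow X_\alpha$ is continuous, the intersection $\widetilde{X}_\alpha\cap W$ is closed in $W$; hence $\mathcal{K}$ is a closed subset of a compact set in $W$, and therefore compact in $W$. By construction, $T(t)B\subset\mathcal{K}$ for every $t>t_R$, and $C_Y$ is independent of $B$, so $\mathcal{K}$ absorbs every bounded subset of $\widetilde{X}_\alpha$.

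Finally I would apply Theorem~\ref{thm:Globalattractor} with its roles of $X$ and $Y$ played respectively by $\widetilde{X}_\alpha$ (a Hilbert space, as a closed subspace of $X_\alpha$) and by $W\cap\widetilde{X}_\alpha$ endowed with the norm inherited from $W$. The theorem produces a global connected attractor $\mathcal{A}=\omega_{\widetilde{X}_\alpha}(\mathcal{K})=\omega_{W}(\mathcal{K})\subset W$ which attracts bounded subsets of $\widetilde{X}_\alpha$ in the $W$-topology, as claimed. There is no genuine analytic obstacle here: all the substantial work is packaged into the hypotheses (i) and (ii), and the argument amounts to matching compactness and closedness properties so as to feed the abstract Theorem~\ref{thm:Globalattractor}; the only mild subtlety is verifying that the absorbing set stays inside the closed subspace $\widetilde{X}_\alpha$, which follows directly from (i).
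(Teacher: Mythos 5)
Your proposal is correct and follows essentially the same route as the paper: the paper likewise takes $\mathcal{K}$ to be the closure in $W$ of the ball $\{y\in Y:\|y\|_{Y}\le C_{Y}\}$, observes it is compact in $W$ by the compact embedding $Y\overset{c}{\hookrightarrow}W$, verifies absorption of bounded sets of $\widetilde{X}_{\alpha}$ from hypotheses (i)--(ii), and invokes Theorem \ref{thm:Globalattractor}. Your additional step of intersecting with $\widetilde{X}_{\alpha}$ (using that $W\cap\widetilde{X}_{\alpha}$ is closed in $W$, so the compact absorbing set lies inside the space on which the semiflow actually acts) is a small refinement of a point the paper leaves implicit, not a different argument.
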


\begin{proof}
We have to show that the conditions of Theorem \ref{thm:Globalattractor}
are satisfied. As $Y\overset{c}{\hookrightarrow}W$, the bounded set $\mathcal{K}_{0}:=\left\{ x\in Y:\,\left\Vert x\right\Vert _{Y}\le C_{Y}\right\} $
is a relative compact subset of $W$. We define $\mathcal{K}$ to
be the closure of $\mathcal{K}_{0}$ in $W$. Let $B\subset \widetilde{X}_{\alpha}$
be a bounded set, i.e. there exists $R>0$ such that $\left\Vert u_{0}\right\Vert _{X_{\alpha}}\le R$,
for all $u_{0}\in B$. By our assumptions, there exists $t_{R}>0$
such that if $t\ge t_{R}$, then $\left\Vert T(t)u_{0}\right\Vert _{Y}\le C_{Y}$.
Therefore $T(t)B\subset\mathcal{K}_{0}\subset\mathcal{K}$, if $t\ge t_{R}$. 
\end{proof}

The above corollary will now be applied to the proof of the following theorem,
from which part (i) of Theorem \ref{thm:MainTheorem} will follow.

\begin{thm}
\label{thm:Attractorestimate}Let $0<\varepsilon<(n+1)/16-\gamma/8$.
Then, for each $s\ge0$ there is a constant $\varkappa_{s,\varepsilon}>0$
with the following property: for every $R>0$, there exists $\overline{t}_{R,s,\varepsilon}>0$
such that if $u_{0}\in X_{1,0}^{0}$ and $\left\Vert u_{0}\right\Vert _{H_{0}^{-1}(\mathbb{B})}\le R$,
then $\left\Vert u(t,u_{0})\right\Vert _{\mathcal{D}((-A_{s})^{1+\varepsilon})}\le\varkappa_{s,\varepsilon}$,
for all $t>\overline{t}_{R,s,\varepsilon}$.
\end{thm}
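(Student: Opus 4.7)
The plan is to derive the smoothing estimate in three stages of increasing regularity, following Temam's strategy for the low-regularity absorbing set and Song's parabolic bootstrap for higher regularity, with the conical Gauss theorem (Theorem \ref{thm:Gauss-Theorem}) and the interpolation description (Proposition \ref{thm:Interpolation}) doing the heavy lifting.

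\emph{Stage 1: $H_0^{-1}$-absorbing set.} Since $u(t)$ has vanishing mean by Proposition \ref{prop:avaragedoesnotchange}, the element $(-\Delta)^{-1}u(t)\in H_0^1(\mathbb{B})$ is well defined by Proposition \ref{prop:deltaisomorphism}. Rewriting \eqref{eq:mainequation} as $u_t=\Delta\mu$ with $\mu=-\Delta u+u^3-u$, testing with $(-\Delta)^{-1}u$, and applying Theorem \ref{thm:Gauss-Theorem} yields
$$
\frac{1}{2}\frac{d}{dt}\|u\|_{H_0^{-1}(\mathbb{B})}^2+\|\nabla u\|_{L^2}^2+\|u\|_{L^4}^4=\|u\|_{L^2}^2.
$$
Young's inequality, Lemma \ref{lem:Poincar=0000E9-Wirtinger} and the inclusion $H_0^1(\mathbb{B})\hookrightarrow H_0^{-1}(\mathbb{B})$ produce a differential inequality $\frac{d}{dt}\|u\|^2_{H_0^{-1}}+c\|u\|^2_{H_0^{-1}}\le C$, hence an absorbing ball with entry time $t_0=t_0(R)$.

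\emph{Stage 2: $H_0^1$-absorbing set.} Integrating the identity above over $[t_0,t_0+1]$ gives uniform control of $\int\|\nabla u\|^2\,d\tau$ and $\int\|u\|_{L^4}^4\,d\tau$, while the Lyapunov functional $E(u)=\frac{1}{2}\|\nabla u\|_{L^2}^2+\frac{1}{4}\|u\|_{L^4}^4-\frac{1}{2}\|u\|_{L^2}^2$ satisfies $\frac{d}{dt}E(u)=-\|\nabla\mu\|_{L^2}^2\le 0$ by Theorem \ref{thm:Gauss-Theorem} and \eqref{extrareg}. A mean-value argument produces $\tau^{\ast}\in(t_0,t_0+1)$ with $E(u(\tau^{\ast}))$ uniformly bounded, and the monotonicity of $E$ delivers $\|u(t)\|_{H_0^1}\le\rho_1$ for all $t\ge t_0+1$.

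\emph{Stage 3: parabolic bootstrap.} Write \eqref{eq:mainequation} as $u'=A_s u+G(u)$, where $G(u)=u-3\Delta u+\Delta(u^3)$ is a lower-order perturbation of the bi-Laplacian part. Using the Duhamel representation
$$
u(t+h)=e^{hA_s}u(t)+\int_0^h e^{(h-\tau)A_s}G(u(t+\tau))\,d\tau,
$$
the analytic semigroup smoothing $\|(-A_s)^\beta e^{hA_s}\|\le Ch^{-\beta}$ (Corollary \ref{cor:(Bi-Laplacian)}), the Banach algebra property of $\mathcal{H}^{s,\gamma}(\mathbb{B})$ for $s>(n+1)/2$, $\gamma\ge(n+1)/2$, and the embeddings of Corollary \ref{cor:Mellinsobolevembedding}, I iterate a finite chain of bootstrap steps: given a uniform bound of $u(t)$ in some $X_\alpha^r$, estimate $\|G(u)\|_{X_0^{r'}}$ and deduce a uniform bound in a space with higher regularity index; at each step Proposition \ref{thm:Interpolation} identifies the intermediate space with a Mellin--Sobolev space modulo $\mathbb{C}_\omega$ and an asymptotic subspace, allowing Sobolev and weight exponents to be tracked explicitly. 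Finitely many iterations deliver the target bound in $\mathcal{D}((-A_s)^{1+\varepsilon})$. The main obstacle lies here: the bootstrap must simultaneously gain the Sobolev regularity index $s$ and the conic weight $\gamma$ while controlling the cubic term $\Delta(u^3)$ in spaces where the algebra property and the Sobolev embeddings of Corollary \ref{cor:Mellinsobolevembedding} are delicate, and the admissible range $\varepsilon<(n+1)/16-\gamma/8$ is precisely what the Sobolev and weight indices impose on the closing step, while avoiding the exceptional weights excluded in Proposition \ref{thm:Interpolation} (i), (iii).
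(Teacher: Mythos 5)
Your proposal follows essentially the same route as the paper: Stages 1--2 reproduce the Temam argument packaged in Proposition \ref{prop:alpha1/4} (absorbing sets in $H_0^{-1}(\mathbb{B})$ and then $H_0^1(\mathbb{B})$ via the energy identity, Theorem \ref{thm:Gauss-Theorem} and Lemma \ref{lem:Poincar=0000E9-Wirtinger}), and Stage 3 is the same Duhamel/fractional-power bootstrap through the interpolation scales that the paper carries out in Lemma \ref{lem:induction}, Propositions \ref{prop:Estimatefor1/2} and \ref{prop:Attractorestimatetheta}, and the final argument, with the constraint $\varepsilon<(n+1)/16-\gamma/8$ correctly traced to the requirement $\gamma+8\varepsilon<(n+1)/2$ in the closing step. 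The only part left schematic is the explicit exponent bookkeeping of that bootstrap --- in particular the induction on $s$ via Proposition \ref{thm:Interpolation} (iii) and the a priori justification that $u(t)$ actually lies in $\mathcal{D}((-A_{s})^{1+\varepsilon})$ --- which is precisely where the paper invests its detailed computations.
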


The theorem will be proved in several steps. The first one follows
directly from Temam \cite{Temam}. We just highlight the necessary results for repeating the arguments.
\begin{prop}
\label{prop:alpha1/4}There is a constant $\kappa$ with
the following property: for every $R>0$, there is a constant $t_{R}>0$
such that if $u_{0}\in X_{1,0}^{0}$ and $\left\Vert u_{0}\right\Vert _{H_{0}^{-1}(\mathbb{B})}\le R$,
then $\left\Vert u(t,u_{0})\right\Vert _{H_{0}^{1}(\mathbb{B})}\le \kappa$,
for all $t>t_{R}$.
\end{prop}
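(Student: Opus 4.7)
The plan is to transport to the conic setting Temam's two-step argument~\cite{Temam}: first absorb the semiflow in the weaker norm of $H_0^{-1}(\mathbb{B})$, then bootstrap to a uniform $H_0^1(\mathbb{B})$ bound via the Cahn--Hilliard Lyapunov functional. All the analytic tools we need are already in place: Theorem \ref{thm:Gauss-Theorem} for integration by parts, Proposition \ref{prop:deltaisomorphism} for the isomorphism $\Delta:H_0^1(\mathbb{B})\to H_0^{-1}(\mathbb{B})$, Lemma \ref{lem:Poincar=0000E9-Wirtinger} for Poincar\'e--Wirtinger, the embedding $H^1(\mathbb{B})\hookrightarrow L^4(\mathbb{B})$ from Corollary \ref{cor:Mellinsobolevembedding}(i), and the regularity \eqref{extrareg}, which legitimises pointwise calculations for $t>0$.

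For the first step I would pair \eqref{eq:mainequation} against $(-\Delta)^{-1}u(t)\in H_0^{1}(\mathbb{B})$, which is well-defined because $(u(t))_{\mathbb{B}}=0$ by Proposition \ref{prop:avaragedoesnotchange}. Exploiting the identity $\|u\|_{H_0^{-1}}^{2}=\int_{\mathbb{B}}u\,(-\Delta)^{-1}u\,d\mu_{g}$ and two applications of Theorem \ref{thm:Gauss-Theorem}, one arrives at
$$
\frac{1}{2}\frac{d}{dt}\|u\|_{H_0^{-1}(\mathbb{B})}^{2}+\|u\|_{H_0^{1}(\mathbb{B})}^{2}+\|u\|_{L^{4}(\mathbb{B})}^{4}=\|u\|_{L^{2}(\mathbb{B})}^{2}.
$$
Combining $\|u\|_{L^{2}}^{2}\le\tfrac{1}{2}\|u\|_{L^{4}}^{4}+\tfrac{1}{2}|\mathbb{B}|$ with the chain $\|u\|_{H_0^{-1}}\le\|u\|_{L^{2}}\le C\|u\|_{H_0^{1}}$ (from duality and Lemma \ref{lem:Poincar=0000E9-Wirtinger}), one obtains $\frac{d}{dt}\|u\|_{H_0^{-1}}^{2}+c\|u\|_{H_0^{-1}}^{2}\le C$, and Gr\"onwall yields $\|u(t)\|_{H_0^{-1}}^{2}\le e^{-ct}R^{2}+C/c$. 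Integrating the energy identity on a unit interval, once $t$ is large enough for the first term to be controlled, produces the uniform bound
$$
\int_{t}^{t+1}\!\bigl(\|u(s)\|_{H_0^{1}(\mathbb{B})}^{2}+\|u(s)\|_{L^{4}(\mathbb{B})}^{4}\bigr)\,ds\le M.
$$

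For the bootstrap I would introduce the standard Cahn--Hilliard Lyapunov functional
$$
\mathcal{F}(u)=\frac{1}{2}\|u\|_{H_0^{1}(\mathbb{B})}^{2}+\frac{1}{4}\int_{\mathbb{B}}(u^{2}-1)^{2}d\mu_{g}.
$$
Setting $w:=-\Delta u+u^{3}-u\in H^{1}(\mathbb{B})$ and noting that $u_{t}=\Delta w$, Theorem \ref{thm:Gauss-Theorem} gives
$$
\frac{d}{dt}\mathcal{F}(u(t))=\int_{\mathbb{B}}w\,u_{t}\,d\mu_{g}=\int_{\mathbb{B}}w\,\Delta w\,d\mu_{g}=-\|\nabla w\|_{L^{2}(\mathbb{B})}^{2}\le 0,
$$
so $\mathcal{F}\circ u$ is non-increasing. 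The integrated bound from the previous step controls $\int_{t}^{t+1}\mathcal{F}(u(s))\,ds$ uniformly for $t$ past some threshold, so the mean-value theorem produces an $s^{\ast}$ with $\mathcal{F}(u(s^{\ast}))\le M'$, and monotonicity propagates the bound to every $t\ge s^{\ast}$. Consequently $\|u(t)\|_{H_0^{1}}^{2}\le 2\mathcal{F}(u(t))+|\mathbb{B}|\le\kappa^{2}$ for every $t$ beyond the chosen threshold $\bar t_R$.

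The main obstacle is purely technical: each of the two energy identities requires integration by parts on a manifold with a conical tip, where classical boundary-term reasoning is unavailable. All such steps are replaced by calls to Theorem \ref{thm:Gauss-Theorem}; its hypotheses are met because the regularity \eqref{extrareg} places $\Delta u$ and $\Delta w$ in adequate Mellin--Sobolev spaces, the algebra property mentioned in Definition \ref{def:mellinsobolevspaces} handles the cubic nonlinearity $u^{3}$, and Corollary \ref{cor:Mellinsobolevembedding}(i) controls the $L^{4}$-terms that appear. Once these verifications are carried out, Temam's scheme transports to $\mathbb{B}$ unchanged.
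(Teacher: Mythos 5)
Your proposal is correct and follows essentially the same route as the paper: the paper's proof is exactly Temam's two-step scheme (absorbing set in $H_0^{-1}(\mathbb{B})$ via pairing with $(-\Delta)^{-1}u$, then the decreasing Lyapunov functional \eqref{eq:Lyapunov} combined with the unit-interval integral bound to reach $H_0^1(\mathbb{B})$), with the same substitutions of Theorem \ref{thm:Gauss-Theorem}, Corollary \ref{cor:Mellinsobolevembedding}(i), Lemma \ref{lem:Poincar=0000E9-Wirtinger} and Proposition \ref{prop:HzerobetadualH1} for the classical ingredients. Your functional $\mathcal{F}$ differs from the paper's $\mathcal{L}$ only by the additive constant $\tfrac{1}{4}|\mathbb{B}|$, which is immaterial.
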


\begin{proof}
The proof is obtained by following the arguments in \cite[Section 4.2.2]{Temam}. First we prove the existence of $\kappa>0$ with the following
property: for every $R>0$, there is a constant $t_{R}>0$ such that
if $u_{0}\in X_{1,0}^{0}$ and $\left\Vert u_{0}\right\Vert _{H_{0}^{-1}(\mathbb{B})}\le R$,
then $\left\Vert u(t,u_{0})\right\Vert _{H_{0}^{-1}(\mathbb{B})}\le \kappa$,
for all $t>t_{R}$, see deductions of \cite[Equations (4.89)-(4.90)]{Temam}.
For our situation, we only have to take into account the Sobolev immersion
$H^{1}(\mathbb{B})\hookrightarrow L^{4}(\mathbb{B})$ from Corollary
\ref{cor:Mellinsobolevembedding} and Theorem \ref{thm:Gauss-Theorem},
which allow the definition of the strict Lyapunov function \cite[Definition 8.4.5]{HJ} $\mathcal{L}:H^{1}(\mathbb{B})\to\mathbb{R}$ 
by
\begin{equation}\label{eq:Lyapunov}
\mathcal{L}(v)=\frac{1}{2}\int_{\mathbb{B}}\left\langle \nabla v,\nabla v\right\rangle d\mu_{g}+\int_{\mathbb{B}}\left(\frac{1}{4}v^{4}-\frac{1}{2}v^{2}\right)d\mu_{g},
\end{equation}
see also \cite[Section 4.2]{LopesRoidos}. We also use Proposition \ref{prop:HzerobetadualH1}
to identify elements of $\mathcal{H}^{0,\beta}(\mathbb{B})$, $\beta>-1$, with
elements in $H^{-1}(\mathbb{B})$ for the computations. The rest of
proof follows the deduction of \cite[Equation (4.95)]{Temam}.
\end{proof}

In order to proceed to the proof of Theorem \ref{thm:Attractorestimate}, we write \eqref{eq:mainequation}
as
\begin{equation}
u'(t)=A_{s}u+F(u),\label{eq:AbstractCH}
\end{equation}
where $A_{s}:X_{2}^{s}\to X_{0}^{s}$ is given by $A_{s}=-(1-\Delta_{s})^{2}$
and $F:X_{1}^{s}\to X_0^{s}$ is given by $F(u)=\Delta_{s}(u^{3}-3u)+u$. It is well know, see \cite[Theorem 6.13]{Pazy}, that for some $\delta>0$ depending on $A_s$, the fractional powers
satisfy 
\begin{equation}
\left\Vert (-A_{s})^{\alpha}e^{tA_{s}}\right\Vert _{\mathcal{B}\left(\mathcal{H}^{s,\gamma}\left(\mathbb{B}\right)\right)}\le c_{\alpha,s}t^{-\alpha}e^{-\delta t}, \quad t>0,\label{eq:calphas-1}
\end{equation}
where $c_{\alpha,s}>0$ only depends on $\alpha,s\ge 0$.

\begin{lem}\label{lem:induction}
Let $0\le\sigma\le\alpha\le\beta<1$, $0<\tilde{t}<t$, $\delta>0$
be as in \eqref{eq:calphas-1} and $u\in C([\tilde{t},t],\mathcal{D}(A_s))$. Then
\begin{eqnarray}\nonumber
\left\Vert (-A_{s})^{\alpha}u(t)\right\Vert _{\mathcal{H}^{s,\gamma}(\mathbb{B})}
 & \le& C_{\sigma}e^{-\delta(t-\tilde{t})}(t-\tilde{t})^{-\sigma}\left\Vert (-A_{s})^{\alpha-\sigma}u(\tilde{t})\right\Vert _{\mathcal{H}^{s,\gamma}(\mathbb{B})}+C_{\alpha,\beta}\int_{\tilde{t}}^{t}e^{-\delta(t-s)}(t-s)^{-\beta}\\\label{eq:induction}
&&\hspace{-75pt}\times\left(\left\Vert (-A_{s})^{\alpha-\beta+\frac{1}{2}}u^{3}(s)\right\Vert _{\mathcal{H}^{s,\gamma}(\mathbb{B})}+\left\Vert (-A_{s})^{\alpha-\beta+\frac{1}{2}}u(s)\right\Vert _{\mathcal{H}^{s,\gamma}(\mathbb{B})}+\left\Vert (-A_{s})^{\alpha-\beta}u(s)\right\Vert _{\mathcal{H}^{s,\gamma}(\mathbb{B})}\right)ds,
\end{eqnarray}
for some constants $C_{\sigma}$, $C_{\alpha,\beta}$ only depending on $\alpha$, $\beta$, $\sigma$ and $s$.
\end{lem}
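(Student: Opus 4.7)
The natural approach is to apply the variation of constants formula to $u' = A_s u + F(u)$ on the subinterval $[\tilde{t},t]$ and then use the standard analytic-semigroup smoothing estimate \eqref{eq:calphas-1} twice, once for the homogeneous part and once inside the Duhamel integral. Concretely, writing $F(u)=\Delta_{s}(u^{3}-3u)+u$ (so that $u'=A_{s}u+F(u)$ is equivalent to \eqref{eq:mainequation}), the identity
$$
u(t)=e^{(t-\tilde t)A_{s}}u(\tilde t)+\int_{\tilde t}^{t}e^{(t-s)A_{s}}F(u(s))\,ds
$$
holds since $u\in C([\tilde t,t],\mathcal D(A_{s}))$ and $A_{s}$ is the negative generator of an analytic semigroup (Corollary \ref{cor:(Bi-Laplacian)}).

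For the first term on the right, I would factor $(-A_{s})^{\alpha}=(-A_{s})^{\sigma}(-A_{s})^{\alpha-\sigma}$ (these operators commute through the bounded $H^\infty$/$\mathcal{BIP}$ functional calculus of $-A_{s}$) and apply \eqref{eq:calphas-1} with exponent $\sigma\ge 0$; this is exactly the first summand on the right-hand side of \eqref{eq:induction}. For the Duhamel term, I would factor $(-A_{s})^{\alpha}=(-A_{s})^{\beta}(-A_{s})^{\alpha-\beta}$ and apply \eqref{eq:calphas-1} with exponent $\beta\in[\alpha,1)$, producing the integrable singularity $c_{\beta,s}(t-s)^{-\beta}e^{-\delta(t-s)}$ together with $\|(-A_{s})^{\alpha-\beta}F(u(s))\|_{\mathcal{H}^{s,\gamma}(\mathbb{B})}$ inside the integral.

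The remaining and only real task is to bound $(-A_{s})^{\alpha-\beta}F(u(s))$ in terms of the three quantities appearing in \eqref{eq:induction}. Since $-A_{s}=(1-\Delta_{s})^{2}$ and $1-\Delta_{s}$ is positive and has bounded imaginary powers (Corollary \ref{cor:(Bi-Laplacian)}), uniqueness of the positive square root yields the operator identity $(-A_{s})^{1/2}=1-\Delta_{s}$ on the appropriate domains, hence
$$
(-A_{s})^{\alpha-\beta}\Delta_{s}v=(-A_{s})^{\alpha-\beta}v-(-A_{s})^{\alpha-\beta+\frac12}v
$$
for $v\in\{u(s),u^{3}(s)\}$. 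Plugging this into $F(u)=\Delta_{s}u^{3}-3\Delta_{s}u+u$ and applying the triangle inequality gives all three norms in \eqref{eq:induction} plus one extra term $\|(-A_{s})^{\alpha-\beta}u^{3}(s)\|$. This last term is absorbed into $\|(-A_{s})^{\alpha-\beta+1/2}u^{3}(s)\|$ because $-A_{s}\ge I$ (its spectrum is bounded below by $1$), so $(-A_{s})^{-1/2}$ is bounded and $\|(-A_{s})^{\alpha-\beta}w\|\le C\|(-A_{s})^{\alpha-\beta+1/2}w\|$ for any $w$ in the larger domain.

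The main obstacle is the bookkeeping around the fractional powers and their functional-calculus identities: one has to justify that $(-A_{s})^{1/2}=1-\Delta_{s}$ and that $(-A_{s})$ has bounded inverse in the right scale, both of which follow cleanly from the $\mathcal{BIP}$ property of $1-\Delta_{s}$ together with Theorem \ref{thm:Laplacian}. Everything else is a direct two-step application of \eqref{eq:calphas-1}, and absorbing the remaining constants into $C_{\sigma}$ and $C_{\alpha,\beta}$ yields \eqref{eq:induction}.
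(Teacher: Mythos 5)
Your proposal is correct and follows essentially the same route as the paper: apply $(-A_s)^\alpha$ to the Duhamel formula, use \eqref{eq:calphas-1} with exponents $\sigma$ and $\beta$ for the two terms, and handle $(-A_s)^{\alpha-\beta}\Delta_s v$ via the identity $(-A_s)^{1/2}=1-\Delta_s$. The only (cosmetic) difference is at that last step: the paper writes $(-A_s)^{\alpha-\beta}\Delta_s v=(1-\Delta_s)^{-1}\Delta_s(-A_s)^{\alpha-\beta+1/2}v$ and uses boundedness of $(1-\Delta_s)^{-1}\Delta_s$, whereas you expand $\Delta_s=1-(-A_s)^{1/2}$ and absorb the extra term using boundedness of $(-A_s)^{-1/2}$ — both are valid and equivalent.
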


\begin{proof}
We apply $(-A_{s})^{\alpha}$ to the variation of constants formula
$$
u(t)=e^{(t-\tilde{t})A_{s}}u(\tilde{t})+\int_{\tilde{t}}^{t}e^{(t-s)A_{s}}F(u(s))ds
$$
to obtain
\begin{eqnarray*} 
\lefteqn{\left\Vert (-A_{s})^{\alpha}u(t)\right\Vert _{\mathcal{H}^{s,\gamma}(\mathbb{B})}}\\
 & \le&\left\Vert (-A_{s})^{\sigma}e^{-A_{s}(t-\tilde{t})}\right\Vert _{\mathcal{B}(\mathcal{H}^{s,\gamma}(\mathbb{B}))}\left\Vert (-A_{s})^{\alpha-\sigma}u(\tilde{t})\right\Vert _{\mathcal{H}^{s,\gamma}(\mathbb{B})}+\int_{\tilde{t}}^{t}\left\Vert (-A_{s})^{\beta}e^{A_{s}(t-s)}\right\Vert _{\mathcal{B}(\mathcal{H}^{s,\gamma}(\mathbb{B}))}\\
 && \quad\times\left\Vert (-A_{s})^{\alpha-\beta}\left(\Delta(u^{3}(s)-3u(s))+u(s)\right)\right\Vert _{\mathcal{H}^{s,\gamma}(\mathbb{B})}ds\\
 & \overset{(1)}{\le}&C_{\sigma}e^{-\delta(t-\tilde{t})}(t-\tilde{t})^{-\sigma}\left\Vert (-A_{s})^{\alpha-\sigma}u(\tilde{t})\right\Vert _{\mathcal{H}^{s,\gamma}(\mathbb{B})}+C_{\alpha,\beta}\int_{\tilde{t}}^{t}e^{-\delta(t-s)}(t-s)^{-\beta}\\
 && \quad\times\left(\left\Vert (-A_{s})^{\alpha-\beta+\frac{1}{2}}u^{3}(s)\right\Vert _{\mathcal{H}^{s,\gamma}(\mathbb{B})}+\left\Vert (-A_{s})^{\alpha-\beta+\frac{1}{2}}u(s)\right\Vert _{\mathcal{H}^{s,\gamma}(\mathbb{B})}+\left\Vert (-A_{s})^{\alpha-\beta}u(s)\right\Vert _{\mathcal{H}^{s,\gamma}(\mathbb{B})}\right)ds.
\end{eqnarray*}
In $(1)$ we have used that 
$$
\begin{aligned} & \left\Vert (-A_{s})^{\alpha-\beta}\Delta v\right\Vert _{\mathcal{H}^{s,\gamma}(\mathbb{B})} =\left\Vert (1-\Delta_{s})^{-1}\Delta_{s}(-A_{s})^{\alpha-\beta+\frac{1}{2}}v\right\Vert _{\mathcal{H}^{s,\gamma}(\mathbb{B})}\le c_{s}\left\Vert (-A_{s})^{\alpha-\beta+\frac{1}{2}}v\right\Vert _{\mathcal{H}^{s,\gamma}(\mathbb{B})}.
\end{aligned}
$$
\end{proof}
\begin{prop}
\label{prop:Estimatefor1/2}There is a constant $\kappa_1>0$
with the following property: for every $R>0$, there is a constant
$t_{R,1}>0$ such that if $u_{0}\in X_{1,0}^{0}$ and $\left\Vert u_{0}\right\Vert _{H_{0}^{-1}(\mathbb{B})}\le R$,
then $\left\Vert u(t,u_{0})\right\Vert _{X_{1}^{0}}\le \kappa_1$,
for all $t>t_{R,1}$.

\end{prop}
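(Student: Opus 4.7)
The plan is to bootstrap regularity from the uniform $H_{0}^{1}$-bound of Proposition \ref{prop:alpha1/4} to the desired $X_{1}^{0}$-bound by iterated application of Lemma \ref{lem:induction}.

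As a first step, I would upgrade the $H_{0}^{1}$-bound to a bound in some $X_{\alpha_{0}}^{0}$ with $\alpha_{0}>0$. Since \eqref{gamma} forces $\gamma<(\dim(\mathbb{B})-4)/4<0$, Proposition \ref{prop:inclusionH1mellin} gives $H^{1}(\mathbb{B})\hookrightarrow\mathcal{H}^{1,\gamma+1}(\mathbb{B})$. Choosing $\alpha_{0}\in(0,1/2]$ avoiding the exceptional set of Proposition \ref{thm:Interpolation}(i), we have $\mathcal{H}^{2\alpha_{0},\gamma+2\alpha_{0}}(\mathbb{B})\oplus\mathbb{R}_{\omega}\hookrightarrow X_{\alpha_{0}}^{0}$, and the estimate of Proposition \ref{prop:alpha1/4} translates to $\|u(t)\|_{X_{\alpha_{0}}^{0}}\le C$ for all $t>t_{R}$.

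Next, I inductively improve the regularity via Lemma \ref{lem:induction}. Suppose $\|u(s)\|_{X_{\alpha_{k}}^{0}}\le C_{k}$ for $s>t_{k}$ with $\alpha_{k}<1$. Apply the lemma with $s=0$, $\tilde{t}=t-1$, $\alpha=\alpha_{k+1}/2$ for some $\alpha_{k+1}\in(\alpha_{k},1]$, $\sigma=(\alpha_{k+1}-\alpha_{k})/2$, and $\beta\in(\alpha,1)$ close to $1$. The first term in \eqref{eq:induction} is then controlled by $C_{k}(t-\tilde{t})^{-\sigma}$; the integral term requires bounding $\|(-A_{0})^{\alpha-\beta+1/2}u^{3}(s)\|_{\mathcal{H}^{0,\gamma}}$. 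For $\beta$ close to $1$ this reduces to controlling $u^{3}$ in a space only slightly more regular than $\mathcal{H}^{0,\gamma}$; I use $\|u^{3}\|_{\mathcal{H}^{0,\gamma}}\le C\|u\|_{H^{1}}^{3}$ (from the weighted embedding $H^{1}(\mathbb{B})\hookrightarrow\mathsf{x}^{-\alpha}L^{6}(\mathbb{B})$ of Corollary \ref{cor:Mellinsobolevembedding} together with H\"older's inequality, exploiting $\gamma<0$), combined with the current bound $\|u\|_{X_{\alpha_{k}}^{0}}\le C_{k}$ and the interpolation statements of Proposition \ref{thm:Interpolation}(ii)-(iii), to place $u^{3}$ in $X_{2(1-\beta)}^{0}$ uniformly in $s$. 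After finitely many iterations $\alpha_{k}$ exceeds $1$, and a final application of Lemma \ref{lem:induction} with $\alpha=1/2$ yields $\|u(t)\|_{X_{1}^{0}}\le\kappa_{1}$ for $t>t_{R,1}$.

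The principal technical obstacle is the absence of a Banach-algebra property for $\mathcal{H}^{s,\gamma+c}(\mathbb{B})$ at the negative Mellin weights forced by \eqref{gamma}: one cannot simply infer that $u\in\mathcal{H}^{s,\gamma+c}$ implies $u^{3}\in\mathcal{H}^{s,\gamma+c}$. Consequently each bootstrap step must exploit the weighted Sobolev embeddings $H^{1}(\mathbb{B})\hookrightarrow\mathsf{x}^{-\alpha}L^{p}(\mathbb{B})$ of Corollary \ref{cor:Mellinsobolevembedding} together with H\"older's inequality and the complex interpolation structure of Proposition \ref{thm:Interpolation}, with parameters chosen so that the integral in \eqref{eq:induction} converges and the nonlinear bound feeds into the next iteration.
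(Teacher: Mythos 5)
Your overall architecture --- start from the uniform $H_{0}^{1}$ bound of Proposition \ref{prop:alpha1/4}, feed it into the Duhamel estimate of Lemma \ref{lem:induction}, control the cubic term at the base level via $\|u^{3}\|_{\mathcal{H}^{0,\gamma}(\mathbb{B})}\le C\|u\|_{H^{1}(\mathbb{B})}^{3}$, and bootstrap --- is the paper's. But the decisive step is missing. Reaching any $X_{\theta}^{0}$ with $\theta<1$ needs no induction at all: taking $\beta=\alpha+\frac{1}{2}$ in \eqref{eq:induction} makes the exponent $\alpha-\beta+\frac{1}{2}$ equal to $0$, so the nonlinearity is only ever measured in $\mathcal{H}^{0,\gamma}(\mathbb{B})$ and a single application suffices (this is the paper's Step 1). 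The genuine obstruction is at $\theta=1$: there one must take $\alpha=\frac{1}{2}$ and some $\beta<1$ (for integrability), so the integrand contains $\|(-A_{0})^{1-\beta}u^{3}(s)\|_{\mathcal{H}^{0,\gamma}(\mathbb{B})}$ with a strictly positive power, i.e.\ $u^{3}$ must be bounded in $X_{2(1-\beta)}^{0}$, a space of positive smoothness. Your proposal to achieve this with ``$\beta$ close to $1$'', using the weighted $L^{6}$ embedding, H\"older, and ``the interpolation statements of Proposition \ref{thm:Interpolation}(ii)--(iii)'', does not close: those ingredients only yield control of $\|u^{3}\|_{\mathcal{H}^{0,\gamma}(\mathbb{B})}$, which is the $2(1-\beta)=0$ endpoint, and Proposition \ref{thm:Interpolation} contains no multiplication estimate; one cannot interpolate $u^{3}$ upward into $X_{2(1-\beta)}^{0}$ without already knowing it lies in some positive-order space. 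In the regime $\beta\to1$ the target space $\mathcal{H}^{4(1-\beta),\gamma+4(1-\beta)}(\mathbb{B})\oplus\mathbb{C}_{\omega}$ has arbitrarily small smoothness and negative weight, so no algebra property is available there.

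The paper resolves this by going the opposite way: it chooses $\beta$ \emph{bounded away from} $1$, namely $\frac{1}{2}<\beta<1+\frac{\gamma}{4}-\frac{n+1}{8}$, so that $X_{2(1-\beta)}^{0}=\mathcal{H}^{4(1-\beta),\gamma+4(1-\beta)}(\mathbb{B})\oplus\mathbb{C}_{\omega}$ satisfies $4(1-\beta)>\frac{n+1}{2}$ and $4(1-\beta)+\gamma\ge\frac{n+1}{2}$ and is therefore a Banach algebra, whence $\|u^{3}\|_{X_{2(1-\beta)}^{0}}\le C\|u\|_{X_{2(1-\beta)}^{0}}^{3}$; the factor $\|u\|_{X_{2(1-\beta)}^{0}}$ is then uniformly controlled by Step 1 precisely because $\beta>\frac{1}{2}$ forces $2(1-\beta)<1$. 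The non-emptiness of this window of admissible $\beta$ must itself be verified and uses $n\in\{1,2\}$ together with $\gamma>\frac{n-3}{2}$ (so that $\frac{n+1}{2}-\gamma<2$). Neither the algebra mechanism nor this parameter check appears in your argument, so as written the passage from $\theta<1$ to $\theta=1$ is a gap.
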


\begin{proof}
\emph{Step 1}. Let $\theta\in[1/2,1)$. There exists $\kappa_\theta>0$
with the following property: for every $R>0$, there is a constant
$t_{R,\theta}>0$ such that if $u_{0}\in X_{1,0}^{0}$ and $\left\Vert u_{0}\right\Vert _{H_{0}^{-1}(\mathbb{B})}\le R$,
then $\left\Vert u(t,u_{0})\right\Vert _{X_{\theta}^{0}}\le \kappa_\theta$,
for all $t>t_{R,\theta}$.

Let $u_{0}\in X_{1,0}^{0}$ be such that $\left\Vert u_{0}\right\Vert _{H_{0}^{-1}(\mathbb{B})}\le R$,
$u(t)=T(t)u_{0}$ and $t_{R}$, $\kappa$ be as Proposition
\ref{prop:alpha1/4}. If $\mathsf{x}$ is as in Remark \ref{rem:xmellinL2weight}, then we have
\begin{equation}
\left\Vert u(s)^{3}\right\Vert _{\mathcal{H}^{0,\gamma}(\mathbb{B})}\le C\left(\int_{\mathbb{B}}\left|\mathsf{x}^{-\gamma/3}u(s)\right|^{6}d\mu_{g}\right)^{1/2}=C\left\Vert u\right\Vert _{\mathsf{x}^{\gamma/3}L^{6}(\mathbb{B})}^{3}\le C\left\Vert u\right\Vert _{H^{1}(\mathbb{B})}^{3},\label{eq:u3H0,gamma=00005CleH1}
\end{equation}
where we have used Corollary \ref{cor:Mellinsobolevembedding} and \eqref{gamma}. Also, due to Proposition \ref{prop:inclusionH1mellin} and Proposition \ref{thm:Interpolation} (i),
for suitable $0\le\ell<1$, we have $\gamma +\ell<1$ and
$$
H^{1}(\mathbb{B})\hookrightarrow\mathcal{H}^{\ell,\gamma+\ell}(\mathbb{B})=\mathcal{H}^{\ell,\gamma+\ell}(\mathbb{B})\oplus\mathbb{C}_{\omega}=X_{\ell/2}^{0}.
$$
Hence, using \eqref{eq:induction} with $\tilde{t}=t_{R}$, $\alpha=\frac{\theta}{2}$,
$\sigma=\frac{\theta}{2}-\frac{1}{4}+\varepsilon$ for some $\varepsilon>0$,
 $\beta=\frac{\theta}{2}+\frac{1}{2}$, \eqref{eq:u3H0,gamma=00005CleH1} and Proposition \ref{prop:alpha1/4},
we obtain for $t>t_R$
\begin{eqnarray*} \lefteqn{\left\Vert (-A_{0})^{\frac{\theta}{2}}u(t)\right\Vert _{\mathcal{H}^{0,\gamma}(\mathbb{B})} \le Ce^{-\delta(t-\tilde{t})}(t-\tilde{t})^{-(\frac{\theta}{2}-\frac{1}{4}+\varepsilon)}\left\Vert (-A_{0})^{\frac{1}{4}-\varepsilon}u(\tilde{t})\right\Vert _{\mathcal{H}^{0,\gamma}(\mathbb{B})}}\\
 && + C\int_{\tilde{t}}^{t}e^{-\delta(t-s)}(t-s)^{-(\frac{\theta}{2}+\frac{1}{2})}\left(\left\Vert u^{3}(s)\right\Vert _{\mathcal{H}^{0,\gamma}(\mathbb{B})}+\left\Vert u(s)\right\Vert _{\mathcal{H}^{0,\gamma}(\mathbb{B})}+\left\Vert (-A_{0})^{-\frac{1}{2}}u(s)\right\Vert _{\mathcal{H}^{0,\gamma}(\mathbb{B})}\right)ds\\
 & \le& Ce^{-\delta(t-\tilde{t})}(t-\tilde{t})^{-(\frac{\theta}{2}-\frac{1}{4}+\varepsilon)}\left\Vert u(\tilde{t})\right\Vert _{H^{1}(\mathbb{B})}\\
 && +C\int_{\tilde{t}}^{t}e^{-\delta(t-s)}(t-s)^{-(\frac{\theta}{2}+\frac{1}{2})}\left(\left\Vert u(s)\right\Vert _{H^{1}(\mathbb{B})}^{3}+\left\Vert u(s)\right\Vert _{H^{1}(\mathbb{B})}\right)ds\\
 & \le& Ce^{-\delta(t-t_{R})}(t-t_{R})^{-(\frac{\theta}{2}-\frac{1}{4}+\varepsilon)}+C\int_{0}^{\infty}e^{-\delta s}s^{-\frac{\theta}{2}-\frac{1}{2}}ds,
\end{eqnarray*}
where the contants $C$ in the last line depend on $\kappa$, since $\Vert u(t)\Vert_{H^{1}(\mathbb{B})}\le\kappa$ for $t\ge t_{R}$.

Let us define $\kappa_\theta:=C+C\int_{0}^{\infty}e^{-\delta s}s^{-\theta/2-1/2}ds$
and choose $t_{R,\theta}>t_{R}$ such that
$$
e^{-\delta(t_{R,\theta}-t_{R})}(t_{R,\theta}-t_{R})^{-(\frac{\theta}{2}-\frac{1}{4}+\varepsilon)}<1.
$$
Then $\Vert (-A_{0})^{\theta/2}u(t)\Vert _{\mathcal{H}^{0,\gamma}(\mathbb{B})}\le \kappa_\theta$,
$\forall t>t_{R,\theta}$.\\
\emph{Step 2}. Choose in \eqref{eq:induction} $\alpha=\frac{1}{2}$,
$\sigma=\frac{1}{4}$ and $\beta$, such that $\frac{1}{2}<\beta<1+\frac{\gamma}{4}-\frac{n+1}{8}$.
This is possible as $n\in\{1,2\}$ and $\frac{n-3}{2}<\gamma\le0$.
Hence $\frac{n+1}{2}-\gamma<2$, which implies that $\frac{\gamma}{4}-\frac{n+1}{8}>-\frac{1}{2}$.
With this choice of $\beta$, we also have $\frac{1}{2}>1-\beta>\frac{n+1}{8}-\frac{\gamma}{4}$.
Therefore, with a suitable choice of $\beta$, we have, according to Proposition \ref{thm:Interpolation} (i), that $X_{2(1-\beta)}^{0}=\mathcal{H}^{4(1-\beta),4(1-\beta)+\gamma}(\mathbb{B})\oplus\mathbb{C}_{\omega}$ is an algebra as $4(1-\beta)+\gamma>\frac{n+1}{2}$ and $4(1-\beta)>\frac{n+1}{2}$,
as $\gamma\le0$.

Choosing $\tilde{t}=\max\{t_{R,2(1-\beta)},t_{R,1/2}\}$ and using 
\eqref{eq:induction} with $t>\tilde{t}$, $\alpha=1/2$, $\sigma=1/4$ and $\beta$ as above, we have
\begin{eqnarray*}\lefteqn{\left\Vert (-A_{0})^{\frac{1}{2}}u(t)\right\Vert _{\mathcal{H}^{0,\gamma}(\mathbb{B})} \le Ce^{-\delta(t-\tilde{t})}(t-\tilde{t})^{-\frac{1}{4}}\left\Vert (-A_{0})^{\frac{1}{4}}u(\tilde{t})\right\Vert _{\mathcal{H}^{0,\gamma}(\mathbb{B})}+C\int_{\tilde{t}}^{t}e^{-\delta(t-s)}(t-s)^{-\beta}}\\
 && \times\left(\left\Vert (-A_{0})^{1-\beta}u^{3}(s)\right\Vert _{\mathcal{H}^{0,\gamma}(\mathbb{B})}+\left\Vert (-A_{0})^{1-\beta}u(s)\right\Vert _{\mathcal{H}^{0,\gamma}(\mathbb{B})}+\left\Vert (-A_{0})^{\frac{1}{2}-\beta}u(s)\right\Vert _{\mathcal{H}^{0,\gamma}(\mathbb{B})}\right)ds\\
 & \le& Ce^{-\delta(t-\tilde{t})}(t-\tilde{t})^{-\frac{1}{4}}\left\Vert u(\tilde{t})\right\Vert _{X_{1/2}^{0}}+C\int_{\tilde{t}}^{t}e^{-\delta(t-s)}(t-s)^{-\beta}\\
& & \times(\left\Vert u(s)\right\Vert _{X_{2(1-\beta)}^{0}}^{3}+\left\Vert u(s)\right\Vert _{X_{2(1-\beta)}^{0}})ds\\
 & \le& Ce^{-\delta(t-\tilde{t})}(t-\tilde{t})^{-\frac{1}{4}}\kappa_{1/2}+C\int_{0}^{\infty}e^{-\delta s}s^{-\beta}ds\left(\kappa_{2(1-\beta)}^{3}+\kappa_{2(1-\beta)}\right).
\end{eqnarray*}
Let us define
$$
\kappa_{1}:=C\kappa_{1/2}+C\left(\int_{0}^{\infty}e^{-\delta s}s^{-\beta}ds\right)\left(\kappa_{2(1-\beta)}^{3}+\kappa_{2(1-\beta)}\right)
$$
and we choose $t_{R,1}$ such that $t_{R,1}>\tilde{t}$ and $C\kappa_{1/2}e^{-\delta(t-\tilde{t})}(t-\tilde{t})^{-\frac{1}{4}}<1$,
for $t>t_{R,1}$. Therefore, we conclude that for $t>t_{R,1}$, we
have $\Vert (-A_{0})^{\frac{1}{2}}u(t)\Vert _{\mathcal{H}^{0,\gamma}(\mathbb{B})}\le \kappa_{1}$.
\end{proof}

\begin{prop}
\label{prop:Attractorestimatetheta}For every $s\ge0$
and $\theta\in[1/2,1)$ there is a constant $\kappa_{s,\theta}>0$
with the following property: for every $R>0$ there is a constant
$t_{R,s,\theta}>0$ such that, if $u_{0}\in X_{1,0}^{0}$ and $\left\Vert u_{0}\right\Vert _{H_{0}^{-1}(\mathbb{B})}\le R$,
then $\left\Vert u(t,u_{0})\right\Vert _{\mathcal{D}((-A_{s})^{\theta})}\le\kappa_{s,\theta}$,
for all $t>t_{R,s,\theta}$.
\end{prop}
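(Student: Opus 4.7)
The plan is to bootstrap Lemma \ref{lem:induction} from the $X_{1}^{0}$-bound of Proposition \ref{prop:Estimatefor1/2}, first raising $\theta$ while keeping $s=0$, and then inducting on $s$ in increments smaller than $4$. The hypothesis $\|u_{0}\|_{H_{0}^{-1}(\mathbb{B})}\le R$ only feeds into the construction through the starting time $\tilde t=t_{R,1}$, so the asymptotic constants $\kappa_{s,\theta}$ will indeed be independent of $R$.

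\emph{Base stage ($s=0$, $\theta\in(1/2,1)$).} The case $\theta=1/2$ is already Proposition \ref{prop:Estimatefor1/2}, so fix $\theta\in(1/2,1)$. Apply Lemma \ref{lem:induction} with $\tilde t=t_{R,1}$, $\alpha=\theta$, $\sigma=\theta-1/2$, and $\beta\in(\theta,1)$ slightly above $\theta$. Then $\alpha-\sigma=1/2$, so the first term in \eqref{eq:induction} is controlled by Proposition \ref{prop:Estimatefor1/2}. The integrand involves $(-A_{0})^{\theta-\beta+1/2}$, and $\mathcal{D}((-A_{0})^{\theta-\beta+1/2})=X_{2\theta-2\beta+1}^{0}$. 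Since $2\theta-2\beta+1\in(0,1)$, Proposition \ref{thm:Interpolation} (i) (with $\beta$ chosen to avoid the discrete exceptional set) gives $X_{2\theta-2\beta+1}^{0}=\mathcal{H}^{4\theta-4\beta+2,\gamma+4\theta-4\beta+2}(\mathbb{B})\oplus\mathbb{C}_{\omega}$. For $\beta$ close enough to $\theta$, both indices exceed $(n+1)/2$, using $n\in\{1,2\}$ and $\gamma>(n-3)/2$ from \eqref{gamma}, so this space is a Banach algebra. Combined with $X_{1}^{0}\hookrightarrow X_{2\theta-2\beta+1}^{0}$, Proposition \ref{prop:Estimatefor1/2} then yields $\|u(s)\|_{X_{2\theta-2\beta+1}^{0}}\le C\kappa_{1}$ and, by the algebra estimate, $\|u^{3}(s)\|_{X_{2\theta-2\beta+1}^{0}}\le C\kappa_{1}^{3}$ for $s\ge t_{R,1}$. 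Since $\beta<1$ the time integral $\int_{0}^{\infty}e^{-\delta s}s^{-\beta}ds$ is finite, and Step 2 of the proof of Proposition \ref{prop:Estimatefor1/2} produces constants $\kappa_{0,\theta}$ and $t_{R,0,\theta}$ with the required property.

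\emph{Inductive stage.} Assume the conclusion at some $s_{0}\ge 0$ for every $\theta_{0}\in[1/2,1)$; I deduce it at $s_{0}+\Delta$ for every $\Delta\in(0,4)$, and finitely many such iterations reach any $s\ge 0$. Fix $\theta\in[1/2,1)$, apply Lemma \ref{lem:induction} at regularity $s_{0}+\Delta$ with $\alpha=\theta$, a small $\sigma\in(0,\theta)$, and $\beta\in(\theta,1)$ close to $\theta$. Choose $\theta_{0}\in[1/2,1)$ close enough to $1$ that $\Delta+4(\alpha-\sigma)<4\theta_{0}$ and $\alpha-\sigma<\theta_{0}$. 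By Proposition \ref{thm:Interpolation} (iii) the inductive hypothesis embeds $u(\tilde t)$ into $\mathcal{H}^{s_{0}+4\theta_{0},\gamma+4\theta_{0}-\varepsilon}(\mathbb{B})\oplus\mathbb{C}_{\omega}\oplus\underline{\mathcal{E}}_{\Delta^{2},\gamma+4(\theta_{0}-1)}$; since $\underline{\mathcal{E}}_{\Delta^{2},\gamma+4(\theta_{0}-1)}\subset\mathcal{H}^{\infty,\gamma+2}(\mathbb{B})$ and $\mathcal{E}_{\Delta^{2},\gamma}\subset\mathcal{H}^{\infty,\gamma+2+\alpha_{0}}(\mathbb{B})$, the output lands inside $\mathcal{H}^{s_{0}+\Delta+4(\alpha-\sigma),\gamma+4(\alpha-\sigma)}(\mathbb{B})\oplus\mathbb{C}_{\omega}\oplus\mathcal{E}_{\Delta^{2},\gamma}$, which by Proposition \ref{thm:Interpolation} (ii) sits inside $X_{2(\alpha-\sigma)}^{s_{0}+\Delta}$ with a controlled norm. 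The same reasoning, combined with the algebra identification of $X_{2\theta-2\beta+1}^{s_{0}+\Delta}$ as in the base stage, controls $\|u(s)\|$ and $\|u^{3}(s)\|$ in the integrand, and Lemma \ref{lem:induction} then closes the induction.

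\emph{Main obstacle.} The delicate part is the simultaneous choice of $\sigma,\beta,\theta_{0},\Delta$ so that the interpolation spaces $X_{2(\alpha-\sigma)}^{s_{0}+\Delta}$ and $X_{2\theta-2\beta+1}^{s_{0}+\Delta}$ admit the Mellin-Sobolev algebra description of Proposition \ref{thm:Interpolation} (i), the integrability condition $\beta<1$ is preserved, the discrete exceptional exponents in Proposition \ref{thm:Interpolation} are avoided, and the two asymptotic subspaces $\underline{\mathcal{E}}_{\Delta^{2},\gamma+4(\theta_{0}-1)}$ and $\mathcal{E}_{\Delta^{2},\gamma}$ fit compatibly into the inductive scheme. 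The slack provided by $n\in\{1,2\}$ together with $\gamma>(n-3)/2$ in \eqref{gamma} leaves nonempty parameter windows, but the bookkeeping is where the real work lies.
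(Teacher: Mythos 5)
Your base stage is sound, and it is essentially the mechanism of the paper's own proof (the paper takes $\sigma=\alpha=\beta=\theta$ in Lemma \ref{lem:induction}, you take $\sigma=\theta-1/2$ and $\beta$ slightly above $\theta$; either way everything reduces to the $X_{1}^{0}$-bound of Proposition \ref{prop:Estimatefor1/2} plus the Banach algebra property of the interpolation space). The inductive stage, however, has a genuine gap, located exactly where you placed the ``main obstacle''. All information that the inductive hypothesis can deliver at level $s_{0}$ is \emph{weight-capped at} $\gamma+2$: by Proposition \ref{thm:Interpolation} (iii), $\mathcal{D}((-A_{s_{0}})^{\theta_{0}})$ embeds into $\mathcal{H}^{s_{0}+4\theta_{0},\gamma+4\theta_{0}-\varepsilon}(\mathbb{B})\oplus\mathbb{C}_{\omega}\oplus\underline{\mathcal{E}}_{\Delta^{2},\gamma+4(\theta_{0}-1)}$, and the asymptotics summand is only known to lie in $\mathcal{H}^{\infty,\gamma+2}(\mathbb{B})$, no matter how close $\theta_{0}$ is to $1$. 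Hence your key absorption claim --- that $u(\tilde{t})$ ``lands inside $\mathcal{H}^{s_{0}+\Delta+4(\alpha-\sigma),\gamma+4(\alpha-\sigma)}(\mathbb{B})\oplus\mathbb{C}_{\omega}\oplus\mathcal{E}_{\Delta^{2},\gamma}$'' --- is unjustified as soon as $4(\alpha-\sigma)>2$: the inclusion $\underline{\mathcal{E}}_{\Delta^{2},\gamma+4(\theta_{0}-1)}\subset\mathcal{H}^{\infty,\gamma+2}(\mathbb{B})$ only buys weight $\gamma+2$, and there is no reason why $\underline{\mathcal{E}}_{\Delta^{2},\gamma+4(\theta_{0}-1)}$ should sit inside $\mathbb{C}_{\omega}\oplus\mathcal{E}_{\Delta^{2},\gamma}$, since the two asymptotics spaces select exponents from strips shifted by $4(1-\theta_{0})$ relative to each other (citing $\mathcal{E}_{\Delta^{2},\gamma}\subset\mathcal{H}^{\infty,\gamma+2+\alpha_{0}}(\mathbb{B})$ is beside the point: it says that space is small, not that yours fits into it). With your choice of \emph{small} $\sigma$ one has $\alpha-\sigma\approx\theta$, so $4(\alpha-\sigma)>2$ for every $\theta\in(1/2,1)$, and the first term of \eqref{eq:induction} is simply not controlled. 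The same weight cap breaks the integrand for large $\Delta$: the algebra identification of $X_{2\theta-2\beta+1}^{s_{0}+\Delta}$ forces its weight $\gamma+2-4(\beta-\theta)\ge(n+1)/2$, i.e.\ $4(\beta-\theta)\le\gamma-\frac{n-3}{2}<\frac{1}{2}$ by \eqref{gamma}, while embedding the level-$s_{0}$ information into that algebra needs $s_{0}+4\theta_{0}\ge s_{0}+\Delta+2-4(\beta-\theta)$, i.e.\ $\Delta<4\theta_{0}-2+4(\beta-\theta)<2.5$. So increments anywhere near $4$ are impossible; note also that your own constraint $\Delta+4(\alpha-\sigma)<4\theta_{0}<4$, with $\sigma$ small, forces $\Delta<4(1-\theta)+4\sigma$, which tends to $0$ as $\theta\to1$ --- internally inconsistent with ``every $\Delta\in(0,4)$''.

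The repair is precisely the paper's bookkeeping. Take $\sigma=\alpha=\beta=\theta$, so that the first term of \eqref{eq:induction} requires only $\|u(\tilde{t})\|_{\mathcal{H}^{s,\gamma}(\mathbb{B})}$ (no positive fractional power, no weight above $\gamma$) and the integrand requires only the $X_{1}^{s}$-algebra norm of $u$ and $u^{3}$; correspondingly, formulate the induction hypothesis not as a $\mathcal{D}((-A_{s_{0}})^{\theta_{0}})$-bound but as a bound in $\mathcal{H}^{s+2,\gamma+2}(\mathbb{B})\oplus\mathbb{C}_{\omega}$, with the weight frozen at $\gamma+2$. Then Proposition \ref{thm:Interpolation} (iii) with exponent $3/4+\varepsilon$, together with $\underline{\mathcal{E}}_{\Delta^{2},\gamma-1+4\varepsilon}\subset\mathcal{H}^{\infty,\gamma+2}(\mathbb{B})$, converts the level-$s_{0}$ bound into the hypothesis at level $s_{0}+1$; iterating in unit steps reaches every $s\ge0$ (anything below $4\theta_{0}-2<2$ per step would also do, but nothing more), and the $\alpha=\beta=\sigma=\theta$ version of Lemma \ref{lem:induction} then upgrades each such bound to $\mathcal{D}((-A_{s})^{\theta})$ for all $\theta\in[1/2,1)$, which is the statement.
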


\begin{proof}
The result is a direct consequence of the following two claims.\\
\emph{First claim}: Suppose that for some $s\ge0$ there is a constant
$\kappa_{s}>0$ with the following property: for every $R>0$ there
exists $\tilde{t}_{R,s}>0$ such that, if $u_{0}\in X_{1,0}^{0}$ and
$\left\Vert u_{0}\right\Vert _{H_{0}^{-1}(\mathbb{B})}\le R$, then
$\left\Vert u(t,u_{0})\right\Vert _{\mathcal{H}^{s+2,\gamma+2}(\mathbb{B})\oplus\mathbb{C}_{\omega}}\le\kappa_{s}$,
for all $t>\tilde{t}_{R,s}$. If such a constant $\kappa_{s}>0$ exists, then for each $\theta\in[1/2,1)$ there
is also a constant $\kappa_{s,\theta}>0$ with the following property:
for every $R>0$ there exists $t_{R,s,\theta}>0$ such that, if $u_{0}\in X_{1,0}^{0}$
and $\left\Vert u_{0}\right\Vert _{H_{0}^{-1}(\mathbb{B})}\le R$,
then $\left\Vert u(t,u_{0})\right\Vert _{\mathcal{D}((-A_{s})^{\theta})}\le\kappa_{s,\theta}$,
for all $t>t_{R,s,\theta}$.\\
\emph{Proof of the first claim}: We use \eqref{eq:induction}
with $\alpha=\beta=\sigma=\theta\in[1/2,1)$, $\tilde{t}=\tilde{t}_{R,s}$
and $t>\tilde{t}$, to obtain
\begin{eqnarray*}
\left\Vert (-A_{s})^{\theta}u(t)\right\Vert _{\mathcal{H}^{s,\gamma}(\mathbb{B})} & \le & C_{\sigma}e^{-\delta(t-\tilde{t})}(t-\tilde{t})^{-\theta}\left\Vert u(\tilde{t})\right\Vert _{\mathcal{H}^{s,\gamma}(\mathbb{B})}+C_{\alpha,\beta}\int_{\tilde{t}}^{t}e^{-\delta(t-s)}(t-s)^{-\theta}\\
 & & \times\left(\left\Vert (-A_{s})^{\frac{1}{2}}u^{3}(s)\right\Vert _{\mathcal{H}^{s,\gamma}(\mathbb{B})}+\left\Vert (-A_{s})^{\frac{1}{2}}u(s)\right\Vert _{\mathcal{H}^{s,\gamma}(\mathbb{B})}+\left\Vert u(s)\right\Vert _{\mathcal{H}^{s,\gamma}(\mathbb{B})}\right)ds\\
 & \le & C_{\sigma}e^{-\delta(t-\tilde{t})}(t-\tilde{t})^{-\theta}\left\Vert u(\tilde{t})\right\Vert _{\mathcal{H}^{s,\gamma}(\mathbb{B})}+C_{\alpha,\beta}\int_{\tilde{t}}^{t}e^{-\delta(t-s)}(t-s)^{-\theta}\\
 & & \times\left(\left\Vert u(s)\right\Vert _{\mathcal{H}^{s+2,\gamma+2}(\mathbb{B})\oplus\mathbb{C}_{\omega}}^{3}+\left\Vert u(s)\right\Vert _{\mathcal{H}^{s+2,\gamma+2}(\mathbb{B})\oplus\mathbb{C}_{\omega}}+\left\Vert u(s)\right\Vert _{\mathcal{H}^{s,\gamma}(\mathbb{B})}\right)ds\\
 & \le & C_{\sigma}(t-\tilde{t}_{R,s})^{-\theta}e^{-\delta(t-\tilde{t}_{R,s})}\kappa_{s}+C_{\alpha,\beta}\left(\kappa_{s}^{3}+2\kappa_{s}\right)\int_{0}^{\infty}s^{-\theta}e^{-\delta s}ds.
\end{eqnarray*}
Let us choose
$$
\kappa_{s,\theta}:=C_{\sigma}\kappa_{s}+C_{\alpha,\beta}\left(\kappa_{s}^{3}+2\kappa_{s}\right)\int_{0}^{\infty}s^{-\theta}e^{-\delta s}ds
$$
and $t_{R,s,\theta}>\tilde{t}_{R,s}$ such that $(t-\tilde{t}_{R,s})^{-\theta}e^{-\delta(t-\tilde{t}_{R,s})}\le1$
when $t\ge t_{R,s,\theta}$. Then $\left\Vert u(t)\right\Vert _{\mathcal{D}((-A_{s})^{\theta})}\le\kappa_{s,\theta}$
for all $t>t_{R,s,\theta}$.\\
\emph{Second claim}: For every $s\ge0$ there is a constant $\kappa_{s}>0$
with the following property: for every $R>0$, there exists $\tilde{t}_{R,s}>0$
such that, if $u_{0}\in X_{1,0}^{0}$ and $\left\Vert u_{0}\right\Vert _{H_{0}^{-1}(\mathbb{B})}\le R$,
then $\left\Vert u(t,u_{0})\right\Vert _{\mathcal{H}^{s+2,\gamma+2}(\mathbb{B})\oplus\mathbb{C}_{\omega}}\le\kappa_{s}$,
for all $t>\tilde{t}_{R,s}$.\\
\emph{Proof of the second claim}: We have seen that this is true for
$s=0$ by Proposition \ref{prop:Estimatefor1/2}.
We now proceed by induction as follows: we prove that, if the property
holds for some $s_{0}\ge0$, then it also holds for all $s\in[s_{0},s_{0}+1]$. 
Indeed, let us suppose that it holds for some $s_{0}\ge0$. Taking $\theta>3/4$, $\sigma\in[0,1]$,
$s=s_{0}+\sigma$ and a suitable small $\varepsilon>0$,
Proposition \ref{thm:Interpolation} (iii) with $\alpha = 3/4 +\varepsilon$ implies
$$
\begin{aligned} & \left\Vert u(t,u_{0})\right\Vert _{\mathcal{H}^{s_{0}+\sigma,\gamma+2}(\mathbb{B})\oplus\mathbb{C}_{\omega}}\le\left\Vert u(t,u_{0})\right\Vert _{\mathcal{H}^{s_{0}+3,\gamma+2}(\mathbb{B})\oplus\mathbb{C}_{\omega}}\le\left\Vert u(t,u_{0})\right\Vert _{\mathcal{H}^{s_{0}+3,\gamma+3}(\mathbb{B})\oplus\mathbb{C}_{\omega}\oplus\underline{\mathcal{E}}_{\Delta^{2},\gamma-1+4\varepsilon}}\\
 & \le\left\Vert u(t,u_{0})\right\Vert _{[\mathcal{H}^{s_{0},\gamma}(\mathbb{B}),X_{2}^{s_{0}}]_{3/4+\varepsilon}}=\left\Vert u(t,u_{0})\right\Vert _{\mathcal{D}((-A_{s_{0}})^{3/4+\varepsilon})}\le\left\Vert u(t,u_{0})\right\Vert _{\mathcal{D}((-A_{s_{0}})^{\theta})}.
\end{aligned}
$$
By the induction hypothesis and the first claim, the last term is
smaller or equal to $\kappa_{s_{0},\theta}$ for all $t>t_{R,s_{0},\theta}$.
Hence the result follows for $\tilde{t}_{R,s_{0}+\sigma}:=t_{R,s_{0},\theta}$,
for all $\sigma\in(0,1]$.
\end{proof}

\begin{proof}
(of Theorem \ref{thm:Attractorestimate}) First we
note that, choosing $\varepsilon>0$ properly, we have
\begin{equation}
\begin{aligned}\mathcal{D}((-A_{s})^{2\varepsilon}) & =[\mathcal{H}^{s,\gamma}(\mathbb{B}),\mathcal{D}(-A_{s})]_{2\varepsilon}=[\mathcal{H}^{s,\gamma}(\mathbb{B}),\mathcal{D}((-A_{s})^{1/2})]_{4\varepsilon}\\
 & =[\mathcal{H}^{s,\gamma}(\mathbb{B}),\mathcal{H}^{s+2,\gamma+2}(\mathbb{B})\oplus\mathbb{C}_{\omega}]_{4\varepsilon}\overset{(1)}{=}\mathcal{H}^{s+8\varepsilon,\gamma+8\varepsilon}(\mathbb{B})\oplus\mathbb{C}_{\omega}=\mathcal{H}^{s+8\varepsilon,\gamma+8\varepsilon}(\mathbb{B}),
\end{aligned}
\label{eq:varepsilon/2}
\end{equation}
where we have used Proposition \ref{thm:Interpolation} (i) in (1) and that $\gamma+8\varepsilon<(n+1)/2$ in the last equality.
Moreover for suitable $0<\tilde{\varepsilon}<1/2-2\varepsilon$ we have
\begin{align}\label{eq:(1+E)2}\nonumber
\mathcal{D}((-A_{s})^{1/2+2\varepsilon+\tilde{\varepsilon}}) & =[\mathcal{H}^{s,\gamma}(\mathbb{B}),\mathcal{D}(A_{s})]_{1/2+2\varepsilon+\tilde{\varepsilon}}\\
 & \hookrightarrow\mathcal{H}^{s+2+8\varepsilon,\gamma+2+8\varepsilon}(\mathbb{B})\oplus\mathbb{C}_{\omega}\oplus\underline{\mathcal{E}}_{\Delta^{2},\gamma-2+8\varepsilon+4\tilde{\varepsilon}},
\end{align}
where we have used Proposition \ref{thm:Interpolation} (iii).

Let $t_{R,s,\theta}>0$, $\theta=1/2+2\varepsilon+\tilde{\varepsilon}$
be as in Proposition \ref{prop:Attractorestimatetheta},
and $u_{0}\in X_{1,0}^{0}$. Then, applying formally $(-A_{s})^{1+\varepsilon}$
to the variation of constants formula give us 
\begin{equation}
(-A_{s})^{1+\varepsilon}u(t)=(-A_{s})^{1+\varepsilon}e^{(t-t_{R,s,\theta})A_{s}}u(t_{R,s,\theta})+\int_{t_{R,s,\theta}}^{t}(-A_{s})^{1-\varepsilon}e^{(t-s)A_{s}}(-A_{s})^{2\varepsilon}F(u(s))ds.\label{eq:variation1+epsilon}
\end{equation}
Notice however that we do not know that $u(t)\in\mathcal{D}((-A_{s})^{1+\varepsilon})$
a priori. This will follow by showing that the $\mathcal{H}^{s,\gamma}(\mathbb{B})$
norm of the integrand of (\ref{eq:variation1+epsilon})
is integrable, see \cite[Proposition 1.1.7]{Arendtetal},
which is a consequence of the following computations, similar to Lemma
\ref{lem:induction}. 
\begin{eqnarray}
\left\Vert u(t)\right\Vert _{\mathcal{D}((-A_{s})^{1+\varepsilon})} & = & \left\Vert (-A_{s})^{1+\varepsilon}u(t)\right\Vert _{\mathcal{H}^{s,\gamma}(\mathbb{B})}\nonumber \\
 & \le & C\left\Vert (-A_{s})^{1+\varepsilon}e^{(t-t_{R,s,\theta})A_{s}}\right\Vert _{\mathcal{B}(\mathcal{H}^{s,\gamma}(\mathbb{B}))}\left\Vert u(t_{R,s,\theta})\right\Vert _{\mathcal{H}^{s,\gamma}(\mathbb{B})}\nonumber \\
 & & +C\int_{t_{R,s,\theta}}^{t}\left\Vert (-A_{s})^{1-\varepsilon}e^{(t-s)A_{s}}\right\Vert _{\mathcal{B}(\mathcal{H}^{s,\gamma}(\mathbb{B}))}\left\Vert (-A_{s})^{2\varepsilon}F(u(s))\right\Vert _{\mathcal{H}^{s,\gamma}(\mathbb{B})}ds.\label{eq:variaationsinduction-1}
\end{eqnarray}
Notice that
\begin{eqnarray}
\left\Vert (-A_{s})^{2\varepsilon}F(u(s))\right\Vert _{\mathcal{H}^{s,\gamma}(\mathbb{B})} & \overset{(1)}{=} & \left\Vert \Delta_{s}(u^{3}(s)-3u(s))+u(s)\right\Vert _{\mathcal{H}^{s+8\varepsilon,\gamma+8\varepsilon}(\mathbb{B})}\nonumber \\
 & \overset{(2)}{\le} & C\left\Vert u^{3}(s)-3u(s)\right\Vert _{\mathcal{H}^{s+8\varepsilon+2,\gamma+8\varepsilon+2}(\mathbb{B})\oplus\mathbb{C}_{\omega}}+C\left\Vert u(s)\right\Vert _{\mathcal{H}^{s+8\varepsilon,\gamma+8\varepsilon}(\mathbb{B})}\nonumber \\
 & \le & C(\left\Vert u(s)\right\Vert _{\mathcal{H}^{s+8\varepsilon+2,\gamma+8\varepsilon+2}(\mathbb{B})\oplus\mathbb{C}_{\omega}\oplus \underline{\mathcal{E}}_{\Delta^{2},\gamma-2+8\varepsilon+4\tilde{\varepsilon}}}^{3}+\left\Vert u(s)\right\Vert _{\mathcal{H}^{s+8\varepsilon,\gamma+8\varepsilon}(\mathbb{B})})\nonumber \\
 & \overset{(3)}{\le} & C(\left\Vert u(s)\right\Vert _{\mathcal{D}((-A_{s})^{1/2+2\varepsilon+\tilde{\varepsilon}})}^{3}+\left\Vert u(s)\right\Vert _{\mathcal{D}((-A_{s})^{1/2+2\varepsilon+\tilde{\varepsilon}})})\nonumber \\
 & \overset{(4)}{\le} & C(\kappa_{s,1/2+2\varepsilon+\tilde{\varepsilon}}^{3}+\kappa_{s,1/2+2\varepsilon+\tilde{\varepsilon}}),\label{eq:variaationsinductionF-1}
\end{eqnarray}
where we have used \eqref{eq:varepsilon/2} in (1),
the continuity of $\Delta:\mathcal{H}^{s+8\varepsilon+2,\gamma+8\varepsilon+2}(\mathbb{B})\oplus\mathbb{C}_{\omega}\to\mathcal{H}^{s+8\varepsilon,\gamma+8\varepsilon}(\mathbb{B})$
in (2), \eqref{eq:(1+E)2} in (3) and Proposition
\ref{prop:Attractorestimatetheta} in (4). By \eqref{eq:variaationsinduction-1}
and \eqref{eq:variaationsinductionF-1}, we find
$$
\left\Vert u(t)\right\Vert _{\mathcal{D}((-A_{s})^{1+\varepsilon})}\le C(t-t_{R,s,\theta})^{-(1+\varepsilon)}e^{-\delta(t-t_{R,s,\theta})}\kappa_{s,\theta}+C\left(\kappa_{s,\theta}^{3}+\kappa_{s,\theta}\right)\int_{0}^{\infty}s^{-(1-\varepsilon)}e^{-\delta s}ds.
$$
 Let us choose 
$$
\varkappa_{s,\varepsilon}:=C\kappa_{s,\theta}+C\left(\kappa_{s,\theta}^{3}+\kappa_{s,\theta}\right)\int_{0}^{\infty}s^{-(1-\varepsilon)}e^{-\delta s}ds
$$
 and $\overline{t}_{R,s,\varepsilon}>t_{R,s,\theta}$ such that $(t-t_{R,s,\theta})^{-(1+\varepsilon)}e^{-\delta(t-t_{R,s,\theta})}\le1$
for $t\ge\overline{t}_{R,s,\varepsilon}$. Then $\left\Vert u(t)\right\Vert _{\mathcal{D}((-A_{s})^{1+\varepsilon})}\le\varkappa_{s,\varepsilon}$
for all $t>\overline{t}_{R,s,\varepsilon}$. 
\end{proof}

We are now finally in position to prove part (i) of Theorem \ref{thm:MainTheorem}.
\begin{proof}
(of part (i) of Theorem \ref{thm:MainTheorem}). We check the conditions
of Corollary \ref{cor:Tofiindattractors} for \eqref{eq:AbstractCH}. Here we
use $\alpha=1/2$, so that $X_{\alpha}=X_{1}^{s}$, and $\widetilde{X}_{\alpha}=X_{1,0}^{s}$. For any $r\ge s$,
we choose $W=\mathcal{D}(\Delta_{r}^{2})$ and $Y=\mathcal{D}((-A_{r})^{1+\varepsilon})$, where $\varepsilon$ is as in Theorem \ref{thm:Attractorestimate}.
Condition (i) follows from \eqref{extrareg}-\eqref{contsolu} and Proposition \ref{prop:avaragedoesnotchange}.
For condition (ii), we first note that 
$$
\mathcal{D}((-A_{r})^{1+\varepsilon})\overset{c}{\hookrightarrow}\mathcal{D}(-A_{r})=\mathcal{D}(\Delta_{r}^{2})\hookrightarrow\mathcal{H}^{s+2,\gamma+2}(\mathbb{B})\oplus\mathbb{C}_{\omega}.
$$
Moreover, if $u_{0}\in X_{1,0}^{s}$ and $\left\Vert u_{0}\right\Vert _{X_{1}^{s}}\le R$, then,
as $X_{1}^{s}\hookrightarrow H^{-1}(\mathbb{B})$ and $\int_{\mathbb{B}} u_{0}d\mu_{g}=0$,
we conclude that $\left\Vert u_{0}\right\Vert _{H_{0}^{-1}(\mathbb{B})}\le \widetilde{R}$.
Theorem \ref{thm:Attractorestimate} gives the necessary estimate of the second condition.

Corollary \ref{cor:Tofiindattractors} implies the existence of a connected global attractor $\mathcal{A}^{s}$
for the semiflow $T:[0,\infty)\times X_{1,0}^{s}\to X_{1,0}^{s}$.
By uniqueness of the global attractor, $\mathcal{A}^{s}$ does not
depend on $r$. Hence $\mathcal{A}^{s}\subset\mathcal{D}(\Delta_{r}^{2})$
for all $r>0$ and it attracts bounded sets of $X_{1,0}^{s}$
in $\mathcal{D}(\Delta_{r}^{2})$.

For the $s$-independence, let $s_{1}>s_{2}\ge0$. As $X_{1}^{s_{1}}\hookrightarrow X_{1}^{s_{2}}$
is continuous and $\mathcal{A}^{s_{1}}$ is compact in $X_{1,0}^{s_{1}}$,
we conclude that $\mathcal{A}^{s_{1}}$ is also compact in $X_{1,0}^{s_{2}}$.
Consider now a bounded set $B\subset X_{1,0}^{s_{2}}$. Due to Theorem
\ref{thm:Attractorestimate}, there exists $\tilde{t}>0$ such that
the set $T(\tilde{t})B$ is a bounded set of $X_{1,0}^{s_{1}}$. Therefore,
for $t\ge\tilde{t}$, we have 
\begin{eqnarray*}\sup_{b\in B}\inf_{a\in\mathcal{A}^{s_{1}}}\left\Vert T(t)b-a\right\Vert _{X_{1}^{s_{2}}} & \le&\sup_{b\in B}\inf_{a\in\mathcal{A}^{s_{1}}}\left\Vert T(t)b-a\right\Vert _{X_{1}^{s_{1}}}\\
 & =&\sup_{b\in B}\inf_{a\in\mathcal{A}^{s_{1}}}\left\Vert T(t-\tilde{t})T(\tilde{t})b-a\right\Vert _{X_{1}^{s_{1}}}\overset{t\to\infty}{\longrightarrow}0.
\end{eqnarray*}
 Finally, as $T(t)\mathcal{A}^{s_{1}}=\mathcal{A}^{s_{1}}$, we conclude
that $\mathcal{A}^{s_{1}}$ is a global attractor for the semiflow $T$
in $X_{1,0}^{s_{2}}$. By uniqueness of global attractors $\mathcal{A}^{s_{1}}=\mathcal{A}^{s_{2}}$.
\end{proof}

\section{Convergence to the equilibrium}

In this section, we prove part (ii) of Theorem \ref{thm:MainTheorem}. We first state an abstract result from \cite{HJ}. Let $V$ and $H$ be real Hilbert spaces such that $V$ is densely
and continuously embedded to $H$. We recall that an element $x\in H$
defines a continuous linear functional in $V$ by $y\in V\mapsto(y,x)_{H}\in\mathbb{R}$.
Under this, we have $V\overset{i_{V,H}}{\hookrightarrow}H\overset{i_{H,V^{*}}}{\hookrightarrow}V^{*}$,
where $i_{V,H}$ and $i_{H,V^{*}}$ are continuous embeddings with
dense image.
\begin{thm}
\label{thm:HarauxJendoubi} \cite[Section 11.2]{HJ}
Let $E:V\to\mathbb{R}$ be a real analytic function such that $E(0)=0\in\mathbb{R}$,
$DE(0)=0\in V^{*}$ and $A:=D^{2}E(0):V\to V^{*}$ is a Fredholm operator,
where $DE:V\to V^{*}$ and $D^{2}E:V\to\mathcal{B}(V,V^{*})$ are
the first and second Fr\'echet derivatives. Then there exist $\theta\in(0,1/2]$,
$\sigma>0$ and $c>0$ such that 
$$
\left|E(v)\right|^{1-\theta}\le c\left\Vert DE(v)\right\Vert _{V^{*}}\quad\text{for all }v\in V\text{ satisfying }\left\Vert v\right\Vert _{V}<\sigma.
$$
\end{thm}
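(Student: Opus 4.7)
The plan is to prove this via the classical \L{}ojasiewicz--Simon reduction, using the Fredholm property of $A$ to collapse the infinite-dimensional problem to a finite-dimensional analytic gradient inequality.

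First I would set up the Fredholm decomposition. Let $N=\ker A$, which is finite-dimensional, and fix a closed topological complement $N_1$ so that $V=N\oplus N_1$. Write $P\in\mathcal{B}(V,N)$ for the associated projection. Since $A$ has closed range of finite codimension in $V^*$, choose also a continuous projection $Q\in\mathcal{B}(V^*,\operatorname{Range}(A))$. The restriction $A|_{N_1}\colon N_1\to\operatorname{Range}(A)$ is then a Banach space isomorphism.

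Next I would invoke the analytic implicit function theorem. Consider the real analytic map $\Psi\colon N\times N_1\to\operatorname{Range}(A)$ given by $\Psi(u,w)=Q\,DE(u+w)$. We have $\Psi(0,0)=0$ and $\partial_w\Psi(0,0)=Q\circ A|_{N_1}$ is an isomorphism, so the analytic IFT produces a real analytic $\psi\colon U\to N_1$ on a neighborhood $U$ of $0\in N$ with $\psi(0)=0$ and $Q\,DE(u+\psi(u))=0$ for all $u\in U$. Define then $f\colon U\to\mathbb{R}$ by $f(u)=E(u+\psi(u))$. Since $U$ lies in the finite-dimensional space $N$ and $f$ is real analytic with $f(0)=0$, $\nabla f(0)=0$, the classical finite-dimensional \L{}ojasiewicz inequality yields $\theta_0\in(0,1/2]$ and $c_0>0$ such that $|f(u)|^{1-\theta_0}\le c_0\,|\nabla f(u)|_N$ on a (possibly smaller) neighborhood of $0$. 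Differentiating $f$ via the chain rule and using $Q\,DE(u+\psi(u))=0$ shows that $\nabla f(u)$ corresponds, under the duality between $N$ and $N^*$, to $(I-Q)DE(u+\psi(u))$, so the \L{}ojasiewicz bound reads $|f(u)|^{1-\theta_0}\le c\,\|(I-Q)DE(u+\psi(u))\|_{V^*}$.

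The final step is a perturbation argument. For $v\in V$ with $\|v\|_V<\sigma$, decompose $v=u+w$ with $u=Pv\in N$ and $w\in N_1$. Applying the mean value theorem to $w\mapsto Q\,DE(u+w)$ between $w$ and $\psi(u)$, together with invertibility of $Q\circ A|_{N_1}$, yields $\|w-\psi(u)\|_V\lesssim\|Q\,DE(v)\|_{V^*}\le C\|DE(v)\|_{V^*}$. Second-order Taylor expansion of $E$ around $u+\psi(u)$ (using $DE(u+\psi(u))$ has no $Q$-component) gives $|E(v)-f(u)|\lesssim\|DE(v)\|_{V^*}^2$, while $C^1$-continuity of $DE$ gives $\|(I-Q)(DE(v)-DE(u+\psi(u)))\|_{V^*}\lesssim\|DE(v)\|_{V^*}$. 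Combining these two estimates with the finite-dimensional \L{}ojasiewicz bound and choosing $\theta=\min\{\theta_0,1/2\}$ produces the required inequality, after using the elementary bound $|E(v)|\le|f(u)|+|E(v)-f(u)|$ and splitting $|E(v)|^{1-\theta}$ through the inequality $(a+b)^{1-\theta}\le a^{1-\theta}+b^{1-\theta}$.

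The main obstacle will be the bookkeeping in the last step: one has to verify that the $\|DE(v)\|_{V^*}^2$ perturbation bounding $|E(v)-f(u)|$ gets absorbed into the right-hand side of the target inequality, which forces $\theta\le 1/2$ and explains why the exponent saturates there. Everything else (Fredholm decomposition, analyticity of $\psi$, finite-dimensional \L{}ojasiewicz) is either standard linear algebra, the analytic IFT, or the classical Hironaka/\L{}ojasiewicz theorem on real analytic functions.
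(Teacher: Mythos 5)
The paper does not prove this theorem: it is imported verbatim from \cite[Section 11.2]{HJ}, and your proposal correctly reconstructs the standard argument used there (Lyapunov--Schmidt reduction via the Fredholm property, the analytic implicit function theorem, the finite-dimensional Lojasiewicz inequality for $f(u)=E(u+\psi(u))$, and the perturbation bookkeeping that forces $\theta\le 1/2$). The one point worth making explicit is that the symmetry of $A=D^{2}E(0)$ (hence index zero, hence $\mathrm{Range}(A)$ equal to the annihilator of $\ker A$) is what permits choosing $Q$ with $\ker Q$ annihilating $N_{1}$, which is the fact behind both your identification of $\nabla f(u)$ with $(I-Q)DE(u+\psi(u))$ and the vanishing of the first-order Taylor term; alternatively, both of these extra terms are bounded by $\left\Vert (I-Q)DE(u+\psi(u))\right\Vert _{V^{*}}\left\Vert w-\psi(u)\right\Vert _{V}\le C\left\Vert DE(v)\right\Vert _{V^{*}}^{2}$, so the estimate goes through even without the compatible choice of projections.
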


The above inequality is called the Lojasiewicz-Simon inequality at
$0$. In our application, the function $E$ of Theorem \ref{thm:HarauxJendoubi}
will be related to the Lyapunov (energy) functional defined for the
Cahn-Hilliard equation. In this section, we always assume that $\dim(\mathbb{B})\in\left\{ 2,3\right\} $
and work with the subspaces of real functions. In order to apply the Theorem \ref{thm:HarauxJendoubi}, we need the following
technical lemma.

\begin{lem}
\label{prop:uemu2v} If $u\in H^{1}(\mathbb{B})$, then the linear
operator $T_{u}:H^{1}(\mathbb{B})\to H^{-1}(\mathbb{B})$ defined
by 
$$
\left\langle T_{u}(v),h\right\rangle _{H^{-1}(\mathbb{B})\times H^{1}(\mathbb{B})}=\int_{\mathbb{B}}u^{2}vhd\mu_{g}
$$
is continuous and compact.
\end{lem}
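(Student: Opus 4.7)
My plan is to prove continuity and compactness as separate steps, using three ingredients from earlier in the paper: the Sobolev embedding $H^{1}(\mathbb{B})\hookrightarrow L^{4}(\mathbb{B})$ for $\dim(\mathbb{B})\in\{2,3\}$ from Corollary \ref{cor:Mellinsobolevembedding}, the compact embedding $H^{1}(\mathbb{B})\overset{c}{\hookrightarrow}\mathcal{H}^{0,0}(\mathbb{B})$ from Proposition \ref{prop:inclusionH1mellin}, and the fact that $\mathcal{H}^{0,0}(\mathbb{B})$ embeds continuously in $H^{-1}(\mathbb{B})$ via Proposition \ref{prop:HzerobetadualH1} (taking $\beta=0$).

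For continuity, a direct application of H\"older's inequality yields
$$
\left|\int_{\mathbb{B}}u^{2}vh\,d\mu_{g}\right|\le\|u\|_{L^{4}(\mathbb{B})}^{2}\|v\|_{L^{4}(\mathbb{B})}\|h\|_{L^{4}(\mathbb{B})}\le C\|u\|_{H^{1}(\mathbb{B})}^{2}\|v\|_{H^{1}(\mathbb{B})}\|h\|_{H^{1}(\mathbb{B})},
$$
so $T_{u}(v)\in H^{-1}(\mathbb{B})$ with $\|T_{u}(v)\|_{H^{-1}}\le C\|u\|_{H^{1}}^{2}\|v\|_{H^{1}}$.

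For compactness, I will combine an approximation argument with a factorization through the compact embedding $H^{1}\overset{c}{\hookrightarrow}\mathcal{H}^{0,0}$. First, writing $u_{1}^{2}-u_{2}^{2}=(u_{1}+u_{2})(u_{1}-u_{2})$ and applying the same H\"older estimate, the map $u\mapsto T_{u}$ is locally Lipschitz from $H^{1}(\mathbb{B})$ to $\mathcal{L}(H^{1}(\mathbb{B}),H^{-1}(\mathbb{B}))$:
$$
\|T_{u_{1}}-T_{u_{2}}\|_{\mathcal{L}(H^{1},H^{-1})}\le C(\|u_{1}\|_{H^{1}}+\|u_{2}\|_{H^{1}})\|u_{1}-u_{2}\|_{H^{1}}.
$$
Next, for $u\in C_{c}^{\infty}(\mathbb{B}^{\circ})$ the function $u^{2}$ is smooth with compact support in $\mathbb{B}^{\circ}$, so multiplication by $u^{2}$ is bounded on $\mathcal{H}^{0,0}(\mathbb{B})$. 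Hence $T_{u}$ factors as
$$
H^{1}(\mathbb{B})\overset{c}{\hookrightarrow}\mathcal{H}^{0,0}(\mathbb{B})\xrightarrow{\;v\mapsto u^{2}v\;}\mathcal{H}^{0,0}(\mathbb{B})\hookrightarrow H^{-1}(\mathbb{B}),
$$
which is compact since the first arrow is compact and the remaining two are continuous.

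To finish, I will invoke that $C_{c}^{\infty}(\mathbb{B}^{\circ})$ is dense in $H^{1}(\mathbb{B})$ by the very definition of $H^{1}(\mathbb{B})$, and that the space of compact operators $\mathcal{K}(H^{1},H^{-1})$ is closed in $\mathcal{L}(H^{1},H^{-1})$. Approximating an arbitrary $u\in H^{1}(\mathbb{B})$ by a sequence $u_{n}\in C_{c}^{\infty}(\mathbb{B}^{\circ})$, the Lipschitz estimate above gives $T_{u_{n}}\to T_{u}$ in operator norm, so $T_{u}$ is compact. There is no real obstacle here; the only thing to be careful about is that the $L^{4}$ embedding used in the H\"older estimate is valid for both $\dim(\mathbb{B})\in\{2,3\}$, which is guaranteed by Corollary \ref{cor:Mellinsobolevembedding}(i), so the argument is dimension-uniform and avoids the weighted embedding subtleties of the three-dimensional case.
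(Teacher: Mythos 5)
Your proof is correct, but it takes a genuinely different route from the paper's. The paper handles an arbitrary $u\in H^{1}(\mathbb{B})$ in one shot: it shows that $\widetilde{T}_{u}(v)=u^{2}v$ maps $H^{1}(\mathbb{B})$ continuously into the weighted space $\mathcal{H}^{0,-\beta}(\mathbb{B})$, $\beta>0$, via H\"older combined with the weighted $L^{6}$ embeddings of Corollary \ref{cor:Mellinsobolevembedding}, and then places the compactness on the \emph{target} side, composing with the identification $I:\mathcal{H}^{0,-\beta}(\mathbb{B})\to\mathcal{H}^{0,\beta}(\mathbb{B})^{*}$ and the embedding $\mathcal{H}^{0,\beta}(\mathbb{B})^{*}\overset{c}{\hookrightarrow}H^{-1}(\mathbb{B})$, which is compact as the dual of the compact embedding $H^{1}(\mathbb{B})\overset{c}{\hookrightarrow}\mathcal{H}^{0,\beta}(\mathbb{B})$. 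You instead place the compactness on the \emph{source} side and pay with an approximation argument: for $u\in C_{c}^{\infty}(\mathbb{B}^{\circ})$ multiplication by $u^{2}$ is a bounded ($L^{\infty}$-multiplier) operator on $\mathcal{H}^{0,0}(\mathbb{B})$, so $T_{u}$ factors through the compact embedding $H^{1}(\mathbb{B})\overset{c}{\hookrightarrow}\mathcal{H}^{0,0}(\mathbb{B})$; the local Lipschitz estimate for $u\mapsto T_{u}$ (which needs only the unweighted $L^{4}$ embedding and the factorization $u_{1}^{2}-u_{2}^{2}=(u_{1}+u_{2})(u_{1}-u_{2})$), the density of $C_{c}^{\infty}(\mathbb{B}^{\circ})$ in $H^{1}(\mathbb{B})$ (true by the very definition of $H^{1}(\mathbb{B})$ as a completion), and the norm-closedness of the compact operators then transfer compactness to arbitrary $u$. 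All your steps check out; the only point worth flagging is that Proposition \ref{prop:HzerobetadualH1} literally asserts continuity of each individual functional, while you need the uniform bound $\left\Vert T_{w}\right\Vert _{H^{-1}(\mathbb{B})}\le C\left\Vert w\right\Vert _{\mathcal{H}^{0,0}(\mathbb{B})}$, but that bound is exactly what its proof establishes. What your route buys is complete avoidance of the weighted $L^{6}$ embeddings and of the dual-space bookkeeping, plus the reusable byproduct that $u\mapsto T_{u}$ is locally Lipschitz into $\mathcal{L}(H^{1}(\mathbb{B}),H^{-1}(\mathbb{B}))$; what the paper's route buys is a direct two-line factorization valid for every $u\in H^{1}(\mathbb{B})$ with no density or limiting argument.
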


\begin{proof}
Let $\beta>0$ and $\widetilde{T}_{u}:H^{1}(\mathbb{B})\to\mathcal{H}^{0,-\beta}(\mathbb{B})$
be defined by $\widetilde{T}_{u}(v)=u^{2}v$. This function is continuous. In fact,
$$
\begin{aligned}\int_{\mathbb{B}}\mathsf{x}^{2\beta}\left|u^{2}v\right|^{2}d\mu_{g} & =\int_{\mathbb{B}}\mathsf{x}^{\beta}u^{4}\mathsf{x}^{\beta}v^{2}d\mu_{g}\overset{(1)}{\le}\Vert \mathsf{x}^{\frac{\beta}{4}}u\Vert _{L^{6}(\mathbb{B})}^{4}\Vert \mathsf{x}^{\frac{\beta}{2}}v\Vert _{L^{6}(\mathbb{B})}^{2}\overset{(2)}{\le}C\left\Vert u\right\Vert _{H^{1}(\mathbb{B})}^{4}\left\Vert v\right\Vert _{H^{1}(\mathbb{B})}^{2}.\end{aligned}
$$
In $(1)$ we have used H\"older inequality and in $(2)$
Corollary \ref{cor:Mellinsobolevembedding}. Therefore we have $\Vert \widetilde{T}_{u}(v)\Vert _{\mathcal{H}^{0,-\beta}(\mathbb{B})}\le C\left\Vert u\right\Vert _{H^{1}(\mathbb{B})}^{2}\left\Vert v\right\Vert _{H^{1}(\mathbb{B})}$.

Let us fix $0<\beta<\alpha<1$. We observe that 
$$
H^{1}(\mathbb{B})\hookrightarrow\mathcal{H}^{1,\alpha}(\mathbb{B})\overset{c}{\hookrightarrow}\mathcal{H}^{0,\beta}(\mathbb{B})\hookrightarrow\mathcal{H}^{0,0}(\mathbb{B}).
$$ 
Therefore $\mathcal{H}^{0,\beta}(\mathbb{B})^{*}\overset{c}{\hookrightarrow}H^{-1}(\mathbb{B})$
is also compact. Denote by $i_{\mathcal{A},\mathcal{B}}:\mathcal{A}\to\mathcal{B}$ the inclusion map $\mathcal{A}\hookrightarrow\mathcal{B}$
 and by
$I:\mathcal{H}^{0,-\beta}(\mathbb{B})\to\mathcal{H}^{0,\beta}(\mathbb{B})^{*}$
the usual identification induced by the inner product in $\mathcal{H}^{0,0}(\mathbb{B})$.
The following map
$$
i_{\mathcal{H}^{0,\beta}(\mathbb{B})^{*},H^{1}(\mathbb{B})^{*}}\circ I\circ\widetilde{T}_{u}:H^{1}(\mathbb{B})\to\mathcal{H}^{0,-\beta}(\mathbb{B})\to\mathcal{H}^{0,\beta}(\mathbb{B})^{*}\to H^{-1}(\mathbb{B})
$$
is continuous and compact, as $i_{\mathcal{H}^{0,\beta}(\mathbb{B})^{*},H^{-1}(\mathbb{B})}$
is a compact operator. The result follows now by the equality $T_{u}=i_{\mathcal{H}^{0,\beta}(\mathbb{B})^{*},H^{-1}(\mathbb{B})}\circ I\circ\widetilde{T}_{u}$.
\end{proof}
As $H_{0}^{1}(\mathbb{B})\hookrightarrow L^{4}(\mathbb{B})$, we can
define the Lyapunov function $\mathcal{L}:H_{0}^{1}(\mathbb{B})\to\mathbb{R}$
by \eqref{eq:Lyapunov}. It is a real analytic
function, as it is the composition of linear and multilinear functions. The following theorem is our main result of this section. Given $u_0\in X^0_{1,0}$, we denote by $\omega(u_0)$ the $\omega$-limit set $\omega_{X^0_{1,0}}(\{u_0\})$. 
\begin{thm}
\label{thm:LSpractical}Let $u_{0}\in X_{1,0}^{0}$.
If $\varphi\in\omega(u_{0})$, then there exist constants $c, \sigma>0$ and $\theta\in(0,1/2]$ such that the following inequality holds: 
$$
\left|\mathcal{L}(v)-\mathcal{L}(\varphi)\right|^{1-\theta}\le c\left\Vert D\mathcal{L}(v)\right\Vert _{H_{0}^{-1}(\mathbb{B})},
$$
whenever $\left\Vert v-\varphi\right\Vert _{H_{0}^{1}(\mathbb{B})}<\sigma$.
\end{thm}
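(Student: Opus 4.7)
The plan is to apply the abstract Lojasiewicz--Simon Theorem \ref{thm:HarauxJendoubi} with $V=H_0^1(\mathbb{B})$ and $V^{*}=H_0^{-1}(\mathbb{B})$ to the shifted functional $E:H_0^1(\mathbb{B})\to\mathbb{R}$ defined by $E(v):=\mathcal{L}(v+\varphi)-\mathcal{L}(\varphi)$. Note that by Theorem \ref{thm:MainTheorem}(i) we have $\omega(u_0)\subset\mathcal{A}\subset\cap_{r>0}\mathcal{D}(\Delta_{r}^{2})$, so in particular $\varphi\in H_0^1(\mathbb{B})$ through Proposition \ref{prop:inclusionH1mellin}, and $v+\varphi\in H_0^1(\mathbb{B})$ for every $v\in H_0^1(\mathbb{B})$. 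The chain rule gives $DE(v)=D\mathcal{L}(v+\varphi)$ in $H_0^{-1}(\mathbb{B})$ and $D^{2}E(0)=D^{2}\mathcal{L}(\varphi)$, hence once the three hypotheses of Theorem \ref{thm:HarauxJendoubi} are checked, the conclusion applied to $E$ and translated via $w=v+\varphi$ yields precisely the asserted inequality for $\mathcal{L}$.

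Real analyticity of $E$ on $H_0^1(\mathbb{B})$ is immediate because $\mathcal{L}$ is polynomial in $v$: the gradient term is the quadratic form inducing $\|\cdot\|_{H_0^1}$, while the quartic and quadratic $L^p$-integrals in \eqref{eq:Lyapunov} extend to continuous multilinear forms on $H_0^1(\mathbb{B})$ thanks to the embedding $H^{1}(\mathbb{B})\hookrightarrow L^{4}(\mathbb{B})$ from Corollary \ref{cor:Mellinsobolevembedding}. The first derivative is
$$
\langle DE(0),h\rangle_{H_0^{-1}(\mathbb{B})\times H_0^1(\mathbb{B})}=\int_{\mathbb{B}}\langle\nabla\varphi,\nabla h\rangle\,d\mu_g+\int_{\mathbb{B}}(\varphi^{3}-\varphi)h\,d\mu_g,\quad h\in H_0^1(\mathbb{B}).
$$
Since $\mathcal{L}$ is a strict Lyapunov function for the semiflow (see Section \ref{sec:Existence-and-regularity-1} and \cite{LopesRoidos}), every $\varphi\in\omega(u_0)$ is an equilibrium of \eqref{eq:mainequation}, i.e.\ $\Delta^{2}\varphi=\Delta(\varphi^{3}-\varphi)$; combined with $\varphi\in\cap_{r>0}\mathcal{D}(\Delta_{r}^{2})$ and the fact that on these smooth asymptotic spaces the kernel of the Laplacian reduces to constants, the function $-\Delta\varphi+\varphi^{3}-\varphi$ must be a constant, whose pairing against mean-zero test functions in $H_0^1(\mathbb{B})$ vanishes. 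Hence $DE(0)=0$.

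For the Fredholm property of $A:=D^{2}E(0):H_0^1(\mathbb{B})\to H_0^{-1}(\mathbb{B})$, a direct computation yields
$$
\langle Ah,k\rangle_{H_0^{-1}(\mathbb{B})\times H_0^1(\mathbb{B})}=\int_{\mathbb{B}}\langle\nabla h,\nabla k\rangle\,d\mu_g+\int_{\mathbb{B}}(3\varphi^{2}-1)hk\,d\mu_g,
$$
so $A=-\Delta+(3\varphi^{2}-1)I$ as an operator from $H_0^1(\mathbb{B})$ to $H_0^{-1}(\mathbb{B})$. The principal part $-\Delta$ is an isomorphism by Proposition \ref{prop:deltaisomorphism}, the zeroth order multiplier $-I$ is compact thanks to the compact embedding $H^{1}(\mathbb{B})\overset{c}{\hookrightarrow}\mathcal{H}^{0,0}(\mathbb{B})$ of Proposition \ref{prop:inclusionH1mellin} combined with the duality in Proposition \ref{prop:HzerobetadualH1}, and multiplication by $3\varphi^{2}$ is compact from $H_0^1(\mathbb{B})$ into $H_0^{-1}(\mathbb{B})$ by Lemma \ref{prop:uemu2v}. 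Thus $A$ is Fredholm of index zero and Theorem \ref{thm:HarauxJendoubi} applies. The main obstacle I expect to need care with is the justification that $\varphi\in\omega(u_0)$ is a constrained critical point of $\mathcal{L}$, i.e.\ the LaSalle-type argument identifying $\omega$-limit points with equilibria of \eqref{eq:mainequation} and the subsequent Liouville step forcing the chemical potential $-\Delta\varphi+\varphi^{3}-\varphi$ to be constant; once this is settled, the remainder is a bookkeeping application of the function-space machinery built in Sections \ref{sec:Function-spaces} and \ref{sec:Realizations}.
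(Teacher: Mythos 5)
Your proposal is correct and follows essentially the same route as the paper: the same choice of $V=H_{0}^{1}(\mathbb{B})$, $V^{*}=H_{0}^{-1}(\mathbb{B})$ and shifted functional $E(v)=\mathcal{L}(v+\varphi)-\mathcal{L}(\varphi)$, the same identification of $\varphi$ as an equilibrium to get $DE(0)=0$, and the same compact-perturbation argument for the Fredholm property via Lemma \ref{prop:uemu2v} and the compact embedding $H^{1}(\mathbb{B})\overset{c}{\hookrightarrow}\mathcal{H}^{0,0}(\mathbb{B})$. The Liouville step you flag is resolved in the paper exactly as the available tools suggest: since $\partial_{t}\varphi=0$ gives $\Delta(\Delta\varphi-\varphi^{3}+\varphi)=0$, applying Theorem \ref{thm:Gauss-Theorem} with $u=v=\Delta\varphi-\varphi^{3}+\varphi$ yields $\int_{\mathbb{B}}\langle\nabla v,\nabla v\rangle\,d\mu_{g}=0$, so the chemical potential is constant.
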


\begin{proof}
The argument is standard and can be found e.g. in \cite{Chill} and \cite{Piotr}. We only stress here the necessary changes for the conical
singularities situation. 

We check the assumptions of Theorem \ref{thm:HarauxJendoubi}. For this, we choose $V=H_{0}^{1}(\mathbb{B})$,
$V^{*}=H_{0}^{-1}(\mathbb{B})$ and $H=\left\{ u\in\mathcal{H}^{0,0}(\mathbb{B});\int_{\mathbb{B}}ud\mu_{g}=0\right\} $, and define the function $E:H_{0}^{1}(\mathbb{B})\to\mathbb{R}$ by $E(v)=\mathcal{L}(v+\varphi)-\mathcal{L}(\varphi)$. It is clear
that $E(0)=0$. For the derivatives, for $v,h\in H_{0}^{1}(\mathbb{B})$ we have that
$$
\begin{aligned}D\mathcal{L}(v) & =-\Delta v+v^{3}-v-\fint_{\mathbb{B}}v^{3}d\mu_{g}\in H_{0}^{-1}(\mathbb{B}),\\
D^{2}\mathcal{L}(v)h & =-\Delta h+(3v^{2}-1)h-3\fint_{\mathbb{B}}v^{2}hd\mu_{g}\in H_{0}^{-1}(\mathbb{B}).
\end{aligned}
$$
The proof of the above expressions uses Theorem \ref{thm:Gauss-Theorem},
the Mellin-Sobolev embeddings from Corollary \ref{cor:Mellinsobolevembedding}
and the identification of Proposition \ref{prop:HzerobetadualH1}.

In order to prove that $DE(0)=0\in V^{*}$, we note that $DE(v)=D\mathcal{L}(v+\varphi)$.
Therefore
$$
DE(0)=-\Delta\varphi+\varphi^{3}-\varphi-(\varphi^{3})_{\mathbb{B}}.
$$
Since $\varphi\in\omega(u)$, we know that $\varphi\in H_{0}^{1}(\mathbb{B})$ is an
equilibrium point \cite[Theorem 8.4.6]{HJ}. Hence
$-\Delta\varphi+\varphi^{3}-\varphi$ is constant. In fact, as $\varphi\in\mathcal{D}(\Delta_{0}^{2})$ and $\frac{\partial\varphi}{\partial t}=0$, Theorem \ref{thm:Gauss-Theorem} with $u=v=\Delta\varphi-\varphi^{3}+\varphi$ and \eqref{eq:mainequation} shows that $\nabla(\Delta\varphi-\varphi^{3}+\varphi)=0$. This constant must
be equal to $(\varphi^{3})_{\mathbb{B}}$ by Theorem \ref{thm:Gauss-Theorem}
and $\int_{\mathbb{B}}\varphi d\mu_{g}=0$, which implies that $DE(0)=0$.

For showing Fredholm property of $D^{2}E(0)$, we first note that
$$
D^{2}E(0)h=-\Delta h+(3\varphi^{2}-1)h-3\fint_{\mathbb{B}}\varphi^{2}hd\mu_{g}\in H_{0}^{-1}(\mathbb{B}).
$$
The inclusion $H^{1}(\mathbb{B})\hookrightarrow H^{-1}(\mathbb{B})$
is compact and by Lemma \ref{prop:uemu2v} the map $h\in H^{1}(\mathbb{B})\mapsto v^{2}h\in H^{-1}(\mathbb{B})$
is also compact. In addition, the map $h\in H_{0}^{1}(\mathbb{B})\mapsto-3\fint_{\mathbb{B}}v^{2}hd\mu_{g}$
has finite rank and, therefore, it is also a compact operator from
$H_{0}^{1}(\mathbb{B})$ to $H_{0}^{-1}(\mathbb{B})$. We conclude
that $D^{2}E(0):H_{0}^{1}(\mathbb{B})\to H_{0}^{-1}(\mathbb{B})$
is a compact perturbation of the isomorphism $\Delta:H_{0}^{1}(\mathbb{B})\to H_{0}^{-1}(\mathbb{B})$.
\end{proof}
As $\mathcal{L}$ is a Lyapunov function bounded from below and as
$\omega(u_{0})$ is compact in $\mathcal{H}^{2,\gamma+2}(\mathbb{B})\oplus\mathbb{C}_{\omega}\hookrightarrow H^{1}(\mathbb{B})$,
we conclude the following:
\begin{cor}
\label{cor:LSappl}Let $u_{0}\in X_{1,0}^{0}$.
Then\\
\emph{(i)} There is a constant $\mathcal{L}_{\infty}\in\mathbb{R}$ such that
$\mathcal{L}(\varphi)=\mathcal{L}_{\infty}$, for all $\varphi\in\omega(u_{0})$.\\
\emph{(ii)} There is a neighborhood $U\subset H_{0}^{1}(\mathbb{B})$ of $\omega(u_{0})$
and constants $C>0$, $\theta\in(0,1/2]$ such that
$$
\left|\mathcal{L}(v)-\mathcal{L}_{\infty}\right|^{1-\theta}\le C\left\Vert D\mathcal{L}(v)\right\Vert _{H_{0}^{-1}(\mathbb{B})},
$$
for all $v\in U$.
\end{cor}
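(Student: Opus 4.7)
For part (i), the plan is to exploit the Lyapunov property of $\mathcal{L}$. Since $\mathcal{L}$ is a strict Lyapunov function (as recalled in the proof of Proposition \ref{prop:alpha1/4}), the function $t\mapsto \mathcal{L}(T(t)u_{0})$ is non-increasing on $[0,\infty)$. Moreover $\mathcal{L}(v)\ge -\tfrac{1}{4}|\mathbb{B}|$, since the integrand $\tfrac{1}{4}v^{4}-\tfrac{1}{2}v^{2}\ge -\tfrac{1}{4}$ pointwise and the gradient term is non-negative. Hence there is a limit $\mathcal{L}_{\infty}:=\lim_{t\to\infty}\mathcal{L}(T(t)u_{0})\in\mathbb{R}$. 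Given $\varphi\in\omega(u_{0})$, choose $t_{n}\to\infty$ with $T(t_{n})u_{0}\to\varphi$ in $X_{1,0}^{0}$, i.e.\ in $\mathcal{H}^{2,\gamma+2}(\mathbb{B})\oplus\mathbb{R}_{\omega}$. By Proposition \ref{prop:inclusionH1mellin} (since $\gamma+2<1$ is not allowed, but directly $\mathcal{H}^{2,\gamma+2}(\mathbb{B})\oplus\mathbb{R}_{\omega}\hookrightarrow \mathcal{H}^{1,1}(\mathbb{B})\oplus\mathbb{R}_{\omega}\hookrightarrow H^{1}(\mathbb{B})$), the convergence also holds in $H^{1}(\mathbb{B})$. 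Since $\mathcal{L}:H^{1}(\mathbb{B})\to\mathbb{R}$ is continuous (it is a polynomial expression involving the $H_{0}^{1}$ seminorm and the continuous embedding $H^{1}(\mathbb{B})\hookrightarrow L^{4}(\mathbb{B})$ from Corollary \ref{cor:Mellinsobolevembedding}), we conclude $\mathcal{L}(\varphi)=\lim_{n}\mathcal{L}(T(t_{n})u_{0})=\mathcal{L}_{\infty}$.

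For part (ii), the strategy is to piece together the pointwise Lojasiewicz--Simon inequality from Theorem \ref{thm:LSpractical} via a compactness argument. Recall that $\omega(u_{0})$ is compact in $\mathcal{H}^{2,\gamma+2}(\mathbb{B})\oplus\mathbb{C}_{\omega}$, and this space embeds continuously into $H^{1}(\mathbb{B})$, so $\omega(u_{0})$ is compact in $H_{0}^{1}(\mathbb{B})$ as well (the mean-zero condition is preserved). For each $\varphi\in\omega(u_{0})$, Theorem \ref{thm:LSpractical} supplies constants $c_{\varphi},\sigma_{\varphi}>0$ and $\theta_{\varphi}\in(0,1/2]$ such that $|\mathcal{L}(v)-\mathcal{L}(\varphi)|^{1-\theta_{\varphi}}\le c_{\varphi}\|D\mathcal{L}(v)\|_{H_{0}^{-1}(\mathbb{B})}$ on the open ball $B_{H_{0}^{1}}(\varphi,\sigma_{\varphi})$. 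By part (i), $\mathcal{L}(\varphi)=\mathcal{L}_{\infty}$, so the left-hand side is exactly $|\mathcal{L}(v)-\mathcal{L}_{\infty}|^{1-\theta_{\varphi}}$.

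Now cover the compact set $\omega(u_{0})\subset H_{0}^{1}(\mathbb{B})$ by finitely many balls $B_{H_{0}^{1}}(\varphi_{i},\sigma_{\varphi_{i}}/2)$, $i=1,\dots,N$, and set $U:=\bigcup_{i=1}^{N}B_{H_{0}^{1}}(\varphi_{i},\sigma_{\varphi_{i}}/2)$, which is an open neighborhood of $\omega(u_{0})$ in $H_{0}^{1}(\mathbb{B})$. Put $\theta:=\min_{i}\theta_{\varphi_{i}}\in(0,1/2]$. After possibly shrinking $U$ using continuity of $\mathcal{L}$ together with $\mathcal{L}(\varphi_{i})=\mathcal{L}_{\infty}$, we may assume $|\mathcal{L}(v)-\mathcal{L}_{\infty}|\le 1$ for every $v\in U$. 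Then for $v\in B_{H_{0}^{1}}(\varphi_{i},\sigma_{\varphi_{i}}/2)\subset B_{H_{0}^{1}}(\varphi_{i},\sigma_{\varphi_{i}})$ we have
$$
|\mathcal{L}(v)-\mathcal{L}_{\infty}|^{1-\theta}\le |\mathcal{L}(v)-\mathcal{L}_{\infty}|^{1-\theta_{\varphi_{i}}}\le c_{\varphi_{i}}\|D\mathcal{L}(v)\|_{H_{0}^{-1}(\mathbb{B})},
$$
so taking $C:=\max_{i}c_{\varphi_{i}}$ yields the claim on all of $U$. The main (minor) obstacle is ensuring the uniform choice of exponent $\theta$ across the finite cover, which is handled precisely by the shrinking step $|\mathcal{L}(v)-\mathcal{L}_{\infty}|\le 1$; everything else is routine compactness and continuity.
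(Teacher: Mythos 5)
Your proposal is correct and follows essentially the same route the paper takes: the paper derives the corollary in one line from the facts that $\mathcal{L}$ is a Lyapunov function bounded from below (giving a limit $\mathcal{L}_{\infty}$ and, by continuity of $\mathcal{L}$ on $H^{1}(\mathbb{B})$, constancy on $\omega(u_{0})$) and that $\omega(u_{0})$ is compact in $\mathcal{H}^{2,\gamma+2}(\mathbb{B})\oplus\mathbb{C}_{\omega}\hookrightarrow H^{1}(\mathbb{B})$, which combined with Theorem \ref{thm:LSpractical} yields the uniform inequality via a finite cover. Your write-up simply makes these standard steps explicit, including the correct handling of the uniform exponent $\theta=\min_{i}\theta_{\varphi_{i}}$ by shrinking $U$ so that $\left|\mathcal{L}(v)-\mathcal{L}_{\infty}\right|\le1$.
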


Having the Lojasiewicz-Simon inequality, we can prove the convergence theorem below.
\begin{prop}
Let $u_{0}\in X_{1,0}^{0}$
and $u$ be the solution of \eqref{eq:mainequation}.
Then there exists a $u_{\infty}\in\omega(u_{0})$ such that $\lim_{t\to\infty}\left\Vert u(t)-u_{\infty}\right\Vert _{H^{-1}_0(\mathbb{B})}=0$.
\end{prop}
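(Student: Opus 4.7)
The plan is to run the standard Lojasiewicz--Simon scheme, adapted to the $H_{0}^{-1}(\mathbb{B})$-gradient flow structure of the Cahn--Hilliard equation: combine an energy identity of the form $\frac{d}{dt}\mathcal{L}(u(t))=-\|u'(t)\|_{H_{0}^{-1}(\mathbb{B})}^{2}$ with the inequality of Corollary \ref{cor:LSappl} to get a differential inequality whose integration yields the finite-length bound $\int \|u'(t)\|_{H_{0}^{-1}(\mathbb{B})}dt<\infty$, which forces $u(t)$ to be Cauchy in $H_{0}^{-1}(\mathbb{B})$.

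Setting $\mu(t):=D\mathcal{L}(u(t))=-\Delta u+u^{3}-u-\fint_{\mathbb{B}}u^{3}d\mu_{g}$, equation \eqref{eq:mainequation} reads $u'(t)=\Delta\mu(t)$. Using \eqref{extrareg} and Proposition \ref{prop:avaragedoesnotchange}, both $\mu(t)$ and $u'(t)$ are smooth in $t>0$ (in particular they lie in $H^{1}(\mathbb{B})$) and have mean zero, hence belong to the natural duality pair $H_{0}^{-1}(\mathbb{B})\times H_{0}^{1}(\mathbb{B})$. Theorem \ref{thm:Gauss-Theorem} applied to $u=v=\mu$ then yields
$$
\tfrac{d}{dt}\mathcal{L}(u(t))=\int_{\mathbb{B}}\mu\,u'\,d\mu_{g}=\int_{\mathbb{B}}\mu\,\Delta\mu\,d\mu_{g}=-\|\mu\|_{H_{0}^{1}(\mathbb{B})}^{2}.
$$
Proposition \ref{prop:deltaisomorphism} identifies $-\Delta:H_{0}^{1}(\mathbb{B})\to H_{0}^{-1}(\mathbb{B})$ as an isometry in the chosen dual norm (since $\|\Delta\mu\|_{H_{0}^{-1}(\mathbb{B})}=\sup_{\|v\|_{H_{0}^{1}}\le1}|\int\nabla\mu\cdot\nabla v\,d\mu_{g}|=\|\mu\|_{H_{0}^{1}(\mathbb{B})}$), so $\|u'\|_{H_{0}^{-1}(\mathbb{B})}=\|\mu\|_{H_{0}^{1}(\mathbb{B})}$, and the desired energy identity follows.

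Next, part (i) of Theorem \ref{thm:MainTheorem} applied to the bounded singleton $\{u_{0}\}\subset X_{1,0}^{0}$ implies that the orbit $\{u(t):t\geq t_{0}\}$ is bounded in $\mathcal{D}(\Delta_{r}^{2})$ for every $r\geq0$ and $t_{0}$ large, hence relatively compact in $H^{1}(\mathbb{B})$, and $\mathrm{dist}_{H^{1}(\mathbb{B})}(u(t),\omega(u_{0}))\to0$ as $t\to\infty$. Therefore $u(t)$ eventually enters and remains in the neighborhood $U\subset H_{0}^{1}(\mathbb{B})$ of $\omega(u_{0})$ provided by Corollary \ref{cor:LSappl}; say $u(t)\in U$ for $t\geq T_{U}$. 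Assuming $\mathcal{L}(u(t))>\mathcal{L}_{\infty}$ for $t\geq T_{U}$ (otherwise $u$ is already stationary, since $\mathcal{L}$ is a strict Lyapunov function), the inclusion $H_{0}^{1}(\mathbb{B})\hookrightarrow H_{0}^{-1}(\mathbb{B})$ combined with Corollary \ref{cor:LSappl} and the identity of the previous paragraph gives
$$
(\mathcal{L}(u)-\mathcal{L}_{\infty})^{1-\theta}\leq C\|D\mathcal{L}(u)\|_{H_{0}^{-1}(\mathbb{B})}\leq C'\|\mu\|_{H_{0}^{1}(\mathbb{B})}=C'\|u'\|_{H_{0}^{-1}(\mathbb{B})}.
$$
Pairing this with the energy identity yields
$$
-\tfrac{d}{dt}(\mathcal{L}(u)-\mathcal{L}_{\infty})^{\theta}=\theta(\mathcal{L}(u)-\mathcal{L}_{\infty})^{\theta-1}\|u'\|_{H_{0}^{-1}(\mathbb{B})}^{2}\geq\tfrac{\theta}{C'}\|u'\|_{H_{0}^{-1}(\mathbb{B})},
$$
and integration from $T_{U}$ to $\infty$ produces the finite-length bound. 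Thus $t\mapsto u(t)$ is Cauchy in $H_{0}^{-1}(\mathbb{B})$ with some limit $u_{\infty}$; precompactness of the orbit in $H^{1}(\mathbb{B})\hookrightarrow H_{0}^{-1}(\mathbb{B})$ forces any $H^{1}$-accumulation point, which lies in $\omega(u_{0})$, to coincide with $u_{\infty}$ in $H_{0}^{-1}(\mathbb{B})$, so $u_{\infty}\in\omega(u_{0})$.

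The main obstacle I anticipate is the rigorous justification of the energy identity: one needs both (a) enough regularity of $u(t)$ so that all pairings are meaningful and Theorem \ref{thm:Gauss-Theorem} applies to $\mu$ against itself, and (b) the careful identification of $-\Delta$ as an isometry between $H_{0}^{1}(\mathbb{B})$ and $H_{0}^{-1}(\mathbb{B})$ in the chosen norms. Both ingredients are available from \eqref{extrareg}, Proposition \ref{prop:avaragedoesnotchange}, Theorem \ref{thm:Gauss-Theorem} and Proposition \ref{prop:deltaisomorphism}, but their proper assembly in the conic setting is where the technical work concentrates; everything else in the argument is a direct transplant of the well-established abstract gradient-flow scheme.
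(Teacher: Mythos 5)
Your proposal is correct and follows essentially the same route as the paper: the energy identity $-\frac{d}{dt}\mathcal{L}(u)=\|u'\|_{H_0^{-1}(\mathbb{B})}^2$ via Theorem \ref{thm:Gauss-Theorem}, the Lojasiewicz--Simon inequality of Corollary \ref{cor:LSappl} turned into the differential inequality for $(\mathcal{L}(u)-\mathcal{L}_\infty)^\theta$, integration to get $\partial_t u\in L^1(0,\infty;H_0^{-1}(\mathbb{B}))$, and compactness of the orbit to place the limit in $\omega(u_0)$ --- exactly the chain of steps (1)--(3) in the paper, with your ``isometry'' claim corresponding to the paper's use of Corollary \ref{lem:EquivalenceofnormDeltauandu-1} and Proposition \ref{prop:deltaisomorphism}. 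The only nuance is that since the paper's $H_0^{-1}(\mathbb{B})$-norm is the dual norm of $\|\cdot\|_{H^1(\mathbb{B})}$ rather than of $\|\cdot\|_{H_0^1(\mathbb{B})}$, the map $-\Delta$ is a norm-equivalence rather than an exact isometry, which is all the argument needs.
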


\begin{proof}
The proof follows from the arguments in \cite[Section 3]{Chill}. We sketch here the steps for the
convenience of the reader.

Let $\mathcal{L}_{\infty}=\lim_{t\to\infty}\mathcal{L}(u(t))$. Then
$\mathcal{L}_{\infty}\le\mathcal{L}(u(t))$ for all $t\in[0,\infty)$.
We define the function $H:[0,\infty)\to\mathbb{R}$ by
$$
H(t)=(\mathcal{L}(u(t))-\mathcal{L}_{\infty})^{\theta},
$$
where $\theta\in(0,1/2]$ as in Corollary \ref{cor:LSappl}. The function
$H$ is non-negative and non-increasing, as
$$
\frac{d}{dt}H(t)=\theta(\mathcal{L}(u(t))-\mathcal{L}_{\infty})^{\theta-1}\frac{d}{dt}\mathcal{L}(u(t))\le0.
$$
Moreover $\lim_{t\to\infty}H(t)=0$. Let $U\subset H_{0}^{1}(\mathbb{B})$
be the open set of Corollary \ref{cor:LSappl}. Due to \cite[Theorem 5.1.8]{HJ}, we have
\begin{equation}\label{eq:convomega}
\lim_{t\to\infty}(\inf_{v\in\omega(u_{0})}\left\Vert T(t)u-v\right\Vert _{\mathcal{H}^{2,\gamma+2}(\mathbb{B})\oplus\mathbb{R}_{\omega}})=0.
\end{equation}
Thus, there exists $t_{0}>0$ such that, for
$t\ge t_{0}$, we have $u(t)\in U$. Hence, for $t\ge t_{0}$, we
estimate
\begin{eqnarray*}-\frac{d}{dt}H(t) &=&\theta(\mathcal{L}(u(t))-\mathcal{L}_{\infty})^{\theta-1}\left(-\frac{d}{dt}\mathcal{L}(u(t))\right)\\
 &\overset{(1)}{\ge}& C\frac{\int_{\mathbb{B}}\left\langle \nabla(-\Delta u+u^{3}-u),\nabla(-\Delta u+u^{3}-u)\right\rangle _{g}d\mu_{g}}{\left\Vert -\Delta u+u^{3}-u-(u^{3})_{\mathbb{B}}\right\Vert _{H_{0}^{-1}(\mathbb{B})}}\\
 & \overset{(2)}{\ge} & C\frac{\int_{\mathbb{B}}\left\langle \nabla(-\Delta u+u^{3}-u),\nabla(-\Delta u+u^{3}-u)\right\rangle d\mu_{g}}{\left\Vert \Delta(\Delta u-u^{3}+u)\right\Vert _{H_{0}^{-1}(\mathbb{B})}}\\
 & \overset{(3)}{\ge} & C\frac{\left\Vert \Delta(-\Delta u+u^{3}-u)\right\Vert _{H_{0}^{-1}(\mathbb{B})}^{2}}{\left\Vert \Delta(\Delta u-u^{3}+u)\right\Vert _{H_{0}^{-1}(\mathbb{B})}}=C\left\Vert \Delta(-\Delta u+u^{3}-u)\right\Vert _{H_{0}^{-1}(\mathbb{B})}.
\end{eqnarray*}
In $(1)$ we have used Theorem \ref{thm:Gauss-Theorem} and Corollary \ref{cor:LSappl}, in $(2)$ Corollary \ref{lem:EquivalenceofnormDeltauandu-1} and in
$(3)$ the definition of $\|\cdot\|_{H_{0}^{-1}(\mathbb{B})}$
and the isomorphism $\Delta:H_{0}^{1}(\mathbb{B})\to H_{0}^{-1}(\mathbb{B})$.
By \eqref{eq:mainequation}, we infer that
$$
\left\Vert \frac{\partial u}{\partial t}\right\Vert _{H_{0}^{-1}(\mathbb{B})}=\left\Vert \Delta(-\Delta u+u^{3}-u)\right\Vert _{H_{0}^{-1}(\mathbb{B})}\le-C\frac{d}{dt}H(t).
$$
Hence $\partial_{t}u\in L^{1}(0,\infty;H_{0}^{-1}(\mathbb{B}))$
and 
$$
u_{\infty}:=\lim_{t\to\infty}u(t)=u(0)+\int_{0}^{\infty}\frac{\partial u}{\partial t}(s)ds,
$$
where the limit is taken in $H_{0}^{-1}(\mathbb{B})$.

It remains to prove that $u_{\infty}\in\omega(u_{0})$. We know that
$\omega(u_{0})\subset\mathcal{H}^{2,\gamma+2}(\mathbb{B})\oplus\mathbb{R}_\omega$
is compact. As $\mathcal{H}^{2,\gamma+2}(\mathbb{B})\oplus\mathbb{R}_\omega\hookrightarrow H^{-1}(\mathbb{B})$,
we conclude that $\omega(u_{0})$ is also compact in $H_{0}^{-1}(\mathbb{B})$. Thus it is also closed. Equation \eqref{eq:convomega} implies that 
$$
\lim_{t\to\infty}(\inf_{v\in\omega(u_{0})}\left\Vert T(t)u-v\right\Vert _{H^{-1}(\mathbb{B})})=0
$$
 and, therefore, that $\inf_{v\in\omega(u_{0})}\left\Vert u_{\infty}-v\right\Vert _{H^{-1}(\mathbb{B})}=0$.
As $\omega(u_{0})$ is closed in $H_{0}^{-1}(\mathbb{B})$, we conclude
that $u_{\infty}\in\omega(u_{0})$.
\end{proof}

Finally, we prove part (ii) of Theorem \ref{thm:MainTheorem}.

\begin{proof} (of part (ii) of Theorem \ref{thm:MainTheorem})
Recall first that $T(t)u_{0}\in\cap_{r\ge0}\mathcal{D}(\Delta_{r}^{2})$ due to \eqref{extrareg}.
Therefore the limit in $\mathcal{D}(\Delta_{r}^{2})$
makes sense.
We know by Theorem \ref{thm:Attractorestimate} that $\left\{T(t)u_{0}\right\} _{t\ge T}$ is precompact
in $\mathcal{D}(\Delta_{r}^{2})$ for some $T>0$. Let $u_{\infty}\in\omega(u_{0})$
be such that $\lim_{t\to\infty}T(t)u_{0}=u_{\infty}$
in $H_{0}^{-1}(\mathbb{B})$. Note that since $u_{\infty}$ is an equilibrium point \cite[Theorem 8.4.6]{HJ}, $T(t)u_{\infty}=u_{\infty}$ for all $t>0$ and, hence, $u_{\infty}\in\cap_{r\ge0}\mathcal{D}(\Delta_{r}^{2})$.
Suppose that this limit does not hold
in $\mathcal{D}(\Delta_{r}^{2})$.
Then, there exist an $\varepsilon_{0}>0$ and a sequence $t_{k}\to\infty$ such that $\left\Vert T(t_{k})u_{0}-u_{\infty}\right\Vert _{\mathcal{D}(\Delta_{r}^{2})}\ge\varepsilon_{0}.$
By compacteness, there exists a subsequence $t_{k_{j}}\to \infty$
such that $T(t_{k_{j}})u_{0}$ converges to a function $w$ in $\mathcal{D}(\Delta_{r}^{2})$.
This implies that $T(t_{k_{j}})u_{0}$ also converges to $w$ in $H^{-1}(\mathbb{B})$.
Therefore $w=u_{\infty}$ by uniqueness of the limit and we obtain a contradiction.
\end{proof}

\section{Acknowledgements}

We would like to express our sincere gratitude to the anonymous referee for the careful reading of the manuscript and thoughtful comments.

Pedro T. P. Lopes was partially supported by FAPESP 2019/15200-1.

\end{document}